\numberwithin{equation}{section}
\def\@captype{table}\makeatother
\def\sqr#1#2{{\vcenter{\vbox{\hrule height.#2pt
        \hbox{\vrule width.#2pt height#1pt \kern#2pt
        \vrule width.#2pt}
        \hrule height.#2pt}}}}
\def\approxleq{ \kern3pt \mbox{\raisebox{.6ex}{$<$}} \kern-8pt
  \mbox{\raisebox{-.6ex}{$\sim$}} \kern5pt}
\newlength{\len}
  \newtheorem{theorem}{Theorem}
  \newtheorem{lemma}[theorem]{Lemma}
  \newtheorem{corollary}[theorem]{Corollary}
  \newtheorem{proposition}[theorem]{Proposition}
  \newtheorem{definition}[theorem]{Definition}
  \newtheorem{assumption}[theorem]{Assumption}
  \newtheorem{remark}[theorem]{Remark}
  \newtheorem{property}[theorem]{Property}
  \newtheorem{example}[theorem]{Example}
  \newtheorem{theorem}{Theorem}[section]
  \newtheorem{lemma}[theorem]{Lemma}
  \newtheorem{proposition}[theorem]{Proposition}
  \newtheorem{assumption}[theorem]{Assumption}
  \newtheorem{remark}[theorem]{Remark}
  \newtheorem{example}[theorem]{Example}
\begin{document}

\title{A two-phase strategy for control constrained elliptic optimal control problems}

\author{Xiaoliang Song\thanks{School of Mathematical Sciences,
Dalian University of Technology, Dalian, Liaoning 116025, China
(\tt{songxiaoliang@mail.dlut.edu.cn}). 
}
\and Bo Yu\thanks{Corresponding author. School of Mathematical Sciences, Dalian University of Technology, Dalian, Liaoning 116024, China. (\tt{yubo@dlut.edu.cn}).}
}

\date{\today}
\maketitle

\begin{abstract}
Elliptic optimal control problems with pointwise box constraints on the control (EOCP) are considered. To solve EOCP, the primal-dual active set (PDAS) method, which is a special semismooth Newton (SSN) method, used to be a priority in consideration of their locally superlinear convergence. However, in general solving the Newton equations is expensive, especially when the discretization is in a fine level. Motivated by the success of applying alternating direction method of multipliers (ADMM) for solving large scale convex minimization problem in finite dimension, it is reasonable to extend the ADMM to solve EOCP. To numerically solve EOCP, the finite element (FE) method is used for discretization. Then, a two-phase strategy is presented to solve discretized problems. In Phase-I, an inexact heterogeneous ADMM (ihADMM) is proposed with the aim of solving discretized problems to moderate accuracy or using it to generate a reasonably good initial point to warm-start Phase-II. Different from the classical ADMM, our ihADMM adopts two different weighted inner product to define the augmented Lagrangian function in two subproblems, respectively. Benefiting from such different weighted techniques, two subproblems of ihADMM can be efficiently implemented. Furthermore, theoretical results on the global convergence as well as the iteration complexity results $o(1/k)$ for ihADMM are given. In Phase-II, in order to obtain more accurate solution, the primal-dual active set (PDAS) method is used as a postprocessor of the ihADMM. Numerical results show that the ihADMM and the two-phase strategy are highly efficient.
\end{abstract}

\textbf{Keywords:} optimal control; finite element; inexact heterogeneous ADMM; semismooth Newton\\

\pagestyle{myheadings} \thispagestyle{plain} \markboth{XIAOLIANG SONG, BO YU, YIYANG WANG}{A TWO-PHASE METHOD FOR OPTIMAL CONTROL PROBLEM}

\section{Introduction}
\vspace{-2pt}
In this paper, we study the following linear-quadratic elliptic PDE-constrained optimal control problems with box constraints on the control:
\begin{equation}\label{eqn:orginal problems}
           \qquad \left\{ \begin{aligned}
        &\min \limits_{(y,u)\in Y\times U}^{}\ \ J(y,u)=\frac{1}{2}\|y-y_d\|_{L^2(\Omega)}^{2}+\frac{\alpha}{2}\|u\|_{L^2(\Omega)}^{2} \\
        &\qquad{\rm s.t.}\qquad Ly=u+y_c\ \ \mathrm{in}\  \Omega, \\
         &\qquad \qquad \qquad y=0\quad  \mathrm{on}\ \partial\Omega,\\
         &\qquad \qquad\qquad u\in U_{ad}=\{v(x)|a\leq v(x)\leq b, {\rm a.e. }\  \mathrm{on}\ \Omega\}\subseteq U,
                          \end{aligned} \right.\tag{$\mathrm{P}$}
 \end{equation}
where $Y:=H_0^1(\Omega)$, $U:=L^2(\Omega)$, $\Omega\subseteq \mathbb{R}^2$ is a convex, open and bounded domain with $C^{1,1}$- or polygonal boundary $\Gamma$; the source term $y_c\in L^2(\Omega)$ and the desired state $y_d \in L^2(\Omega)$; parameters $-\infty<a<b<+\infty$, $\alpha>0$ and the operator $L$ is a second-order linear elliptic differential operator. Such problem (\ref{eqn:orginal problems}) is very important in practical applications, e.g., the electro-magnetic induction with distributed heat source.

Optimization problems with constraints which require the solution of a partial differential equation arise widely in many areas of the sciences and engineering, in particular in problems of design. Optimal control problems with control constraints and their numerical realization have been studied intensively in recent papers, see e.g., \cite{hinze2005variational, falk1973approximation, geveci1979approximation, casas2003error, meyer2004superconvergence} and the references cited there. Let us first comment on known results on error estimates analysis of control constrained optimal control problems. Basic a-priori error estimates were derived by Falk \cite{falk1973approximation} and Geveci \cite{geveci1979approximation} where Falk considered distributed controls, while Geveci concentrates on Neuman boundary controls. Both the authors for piecewise constant control approximations prove optimal $L^2$-error estimates $\mathcal{O}(h)$. Meanwhile, we refer to Arada and Raymond \cite{arada2002error}, where the authors contributed further errors estimate for piecewise constant control approximations and showed convergence of the same order $\mathcal{O}(h)$ in $L^\infty$ norm.
Furthermore, for the approximation of controls by piecewise linear, globally continuous elements, as far as we know, the first error estimates for this type of approximations were proved by Casas and Tr\"{o}ltzsch \cite{casas2003error}in the case of linear-quadratic control problems, proving order $\mathcal{O}(h)$. Later Casas \cite{linearerror} proved order $o(h)$ for control problems governed by semilinear elliptic equations and quite general cost functionals. In \cite{rosch2006error}, R\"{o}sch proved that the error order is $\mathcal{O}(h^{\frac{3}{2}})$ under special assumptions on the continuous solutions, compare also Casas and Tr\"{o}ltzsch \cite{casas2003error}. However, his proof was done for linearquadratic control problems in one dimension. Moreover, a variational discretization concept is introduced by Hinze \cite{hinze2005variational} and a control error of order $\mathcal{O}(h^2)$ is obtained. In certain situations, the same convergence order also can be achieved by a special postprocessing procedure, see Meyer and R\"{o}sch \cite{meyer2004superconvergence}.

Next, let us mention some existing numerical methods for solving problem (\ref{eqn:orginal problems}). As far as we know, most of the aforementioned papers are devoted to directly solve the first-order optimality system, which result in a nonsmooth equation that has to be solved. For this purpose, applying semismooth Newton methods is used to be a priority. A special semismooth Newton method with the active set strategy, called the primal-dual active set (PDAS) method is introduced in \cite{BeItKu} for control constrained elliptic optimal control problems. It is proved to have the locally superlinear convergence (see \cite{Ulbrich2} for more details). Furthermore, mesh-independence results for semismooth Newton methods were established in \cite{meshindependent}. However, in general, it is expensive in solving Newton equations, especially when the discretization is in a fine level.

Recently, for the finite dimensional large scale optimization problems, some efficient first-order algorithms, such as iterative shrinkage/soft thresholding algorithms (ISTA) \cite{Blumen}, accelerated proximal gradient (APG)-based methods \cite{inexactAPG,Beck,inexactABCD}, the ADMM \cite{Boyd,SunToh1,SunToh2,Fazel}, etc., have become the state of the art algorithms. In this paper, we will mainly focus on the ADMM. The classical ADMM was originally proposed by Glowinski and Marroco \cite{GlowMarro} and Gabay and Mercier \cite{Gabay}, and it has found lots of efficient applications. 
In particular, we refer to \cite{Boyd} for a review of the applications of the ADMM in the areas of distributed optimization and statistical learning.

Motivated by above-mentioned facts, we aim to design an algorithm which could solve (\ref{eqn:orginal problems}) to obtain high accurate solution efficiently and fast. To achieve our goal, combining the ADMM and semismooth Newton methods together, a two-phase algorithm is proposed in function space. Specifically, the classical ADMM is developed in Phase-I with the aim of solving an optimal control problem moderate accuracy or using it to generate a reasonably good initial point to warm-start Phase-II. In Phase-II, the PDAS method is used to obtain accurate solutions fast. More importantly, as will be mentioned in Section \ref{sec:2}, each subproblem of the ADMM has a well-formed structure.
Focusing on these inherent structures of the ADMM in function space is worthwhile for us to propose an appropriate discretization scheme and give a suitable algorithm to solve the corresponding discretized problem. Moreover, it will be a crucial point in the numerical analysis to establish similar structures parallel to the ADMM in function space for the discretized problem.

To discretize problem (\ref{eqn:orginal problems}), we consider to use the piecewise linear finite element. Then, in Phase-I, in order to use ADMM-type algorithm to solve the corresponding discretization problem, an artificial variable $z$ should be introduced, and the discretization problem could be equivalently rewritten as a separable form. However, when the classical ADMM is directly used to solve discretized problems, there is no well-formed structure as in continuous case. An important fact is that discretization and algorithms should not be viewed as independent. Rather, they must be intertwined to yield an efficient algorithm for discretized problems. Hence, making use of the inherent structure of problem, an heterogeneous ADMM is proposed. Different from the classical ADMM, we utilize two different weighted inner products to define the augmented Lagrangian function for two subproblems, respectively.
Specifically, based on the $M_h$-weighted inner product, the augmented Lagrangian function with respect to the $u$-subproblem in $k$-th iteration is defined as
\begin{equation*}
  \mathcal{L}_\sigma(u,z^k;\lambda^k)=f(u)+g(z^k)+\langle\lambda,M_h(u-z^k)\rangle+\frac{\sigma}{2}\|u-z^k\|_{M_h}^{2},
\end{equation*}
where $M_h$ is the mass matrix. On the other hand, for the $z$-subproblem, based on the $W_h$-weighted inner product, the augmented Lagrangian function in $k$-th iteration is defined as
\begin{equation*}
  \mathcal{L}_\sigma(u^{k+1},z;\lambda^k)=f(u^{k+1})+g(z)+\langle\lambda,M_h(u^{k+1}-z)\rangle+\frac{\sigma}{2}\|u^{k+1}-z\|_{W_h}^{2},
\end{equation*}
where the lumped mass matrix $W_h$ is diagonal. Furthermore, sometimes it is unnecessary to exactly compute the solution of each subproblem even if it is doable, especially at the early stage of the whole process. For example, if a subproblem is equivalent to solving a large-scale or ill-condition linear system, it is a natural idea to use the iterative methods such as some Krylov-based methods. Hence, taking the inexactness of the solutions of associated subproblems into account, a more practical inexact heterogeneous ADMM (ihADMM) is proposed.

As will be mentioned in the Section \ref{sec:4}, benefiting from different weighted techniques, each subproblem of ihADMM for (\ref{matrix vector form}), i.e., the discrete version of (\ref{eqn:orginal problems}), can be efficiently solved. Specifically, the $u$-subproblem of ihADMM, which result in a large scale linear system, is the main computation cost in whole algorithm. $M_h$-weighted technique could help us to reduce the block three-by-three system to a block two-by-two system without any computational cost so as to reduce calculation amount. On the other hand, $W_h$-weighted technique makes $z$-subproblem have a decoupled form and admit a closed form solution given by the soft thresholding operator and the projection operator onto the box constraint $[a,b]$. Moreover, global convergence and the iteration complexity result $o(1/k)$ in non-ergodic sense for our ihADMM will be proved.

Taking the precision of discretized error into account, we should mention that using our ihADMM algorithm to solve problem (\ref{matrix vector form}) is highly enough and efficient in obtaining an approximate solution with moderate accuracy. Nevertheless, in order to obtain more accurate solutions, if necessarily required, combining ihADMM and semismooth Newton methods together, we give a two-phase strategy. Specifically, our ihADMM algorithm as the Phase-I is used to generate a reasonably good initial point to warm-start Phase-II. In Phase-II, the PDAS method as a postprocessor of our ihADMM is employed to solve the discrete problem to high accuracy.

The remainder of this paper is organized as follows. In Section \ref{sec:2}, a two-phase strategy with an inexact ADMM as Phase-I and PDAS as Phase-II in function space for solving (\ref{eqn:orginal problems}) is described. In Section \ref{sec:3}, the finite element approximation is introduced. In Section \ref{sec:4}, an inexact heterogeneous ADMM is proposed. And as the Phase-II algorithm, the PDAS method is also presented. In Section \ref{sec:5}, numerical results are given to show the efficiency of our ihADMM and two-phase strategy. Finally, we conclude our paper in Section \ref{sec:6}.

Through this paper, let us suppose the elliptic PDEs involved in (\ref{eqn:orginal problems})
\begin{equation}\label{eqn:state equations}
\begin{aligned}
  Ly&=u+y_c \quad \mathrm{in}\  \Omega, \\
   y&=0\qquad \quad \mathrm{on}\ \partial\Omega,
\end{aligned}
\end{equation}
satisfy the following assumption:
\begin{assumption}\label{equ:assumption:1}
The linear second-order differential operator $L$ is defined by
 \begin{equation}\label{operator A}
   (Ly)(x):=-\sum \limits^{n}_{i,j=1}\partial_{x_{j}}(a_{ij}(x)y_{x_{i}})+c_0(x)y(x),
 \end{equation}
where functions $a_{ij}(x), c_0(x)\in L^{\infty}(\Omega)$, $c_0\geq0$,
and it is uniformly elliptic, i.e. $a_{ij}(x)=a_{ji}(x)$ and there is a constant $\theta>0$ such that
\begin{equation}\label{equ:operator A coercivity}
  \sum \limits^{n}_{i,j=1}a_{ij}(x)\xi_i\xi_j\geq\theta\|\xi\|^2 \qquad \mathrm{for\ a.a.}\ x\in \Omega\  \mathrm{and}\  \forall \xi \in \mathbb{R}^n.
\end{equation}
\end{assumption}

The weak formulation of (\ref{eqn:state equations}) is given by
\begin{equation}\label{eqn:weak form}
  \mathrm{Find}\ y\in H_0^1(\Omega):\ a(y,v)=(u+y_c,v)_{L^2(\Omega)}\quad \mathrm{for}\ \forall v \in H_0^1(\Omega)
\end{equation}
with the bilinear form
\begin{equation}\label{eqn:bilinear form}
  a(y,v)=\int_{\Omega}(\sum \limits^{n}_{i,j=1}a_{ji}y_{x_{i}}v_{x_{i}}+c_0yv)\mathrm{d}x.
\end{equation}
Then, utilizing the Lax-Milgram lemma, we have the following proposition.
\begin{proposition}[{\rm \cite [Theorem B.4]{KiSt}}]\label{equ:weak formulation}
Under Assumption {\rm \ref{equ:assumption:1}}, the bilinear form $a(\cdot,\cdot)$ in {\rm (\ref{eqn:bilinear form})} is bounded and $V$-coercive for $V=H^1_0(\Omega)$ and the associate operator $A$ has a bounded inverse. In particular, for a given $y_c\in L^2(\Omega)$ and every $u \in L^2(\Omega)$, {\rm (\ref{eqn:state equations})} has a unique weak solution $y=y(u)\in H^1_0(\Omega)$ given by {\rm (\ref{eqn:weak form})}. Furthermore,
\begin{equation}\label{equ:control estimats state}
  \|y\|_{H^1}\leq C \|u+y_c\|_{L^2(\Omega)},
\end{equation}
for a constant $C$ depending only on $a_{ij}$, $c_0$ and $\Omega$.
\end{proposition}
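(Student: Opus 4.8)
The statement is a standard application of the Lax–Milgram lemma, so the plan is to verify its two hypotheses — boundedness and coercivity of the bilinear form $a(\cdot,\cdot)$ on $V = H^1_0(\Omega)$ — and then read off existence, uniqueness, and the a-priori bound. First I would recall the norm on $H^1_0(\Omega)$; by the Poincaré–Friedrichs inequality the seminorm $|y|_{H^1} = \|\nabla y\|_{L^2}$ is equivalent to the full $H^1$-norm on $H^1_0(\Omega)$, so it suffices to work with $\|\nabla y\|_{L^2}$ throughout. For boundedness, I would estimate $|a(y,v)|$ by splitting into the principal part $\int_\Omega \sum_{i,j} a_{ij} y_{x_i} v_{x_j}\,dx$ and the zeroth-order term $\int_\Omega c_0 y v\,dx$; applying the Cauchy–Schwarz inequality together with the bounds $\|a_{ij}\|_{L^\infty(\Omega)}$ and $\|c_0\|_{L^\infty(\Omega)}$ gives $|a(y,v)| \le C_b \|y\|_{H^1}\|v\|_{H^1}$ with $C_b$ depending only on $a_{ij}$, $c_0$ and $\Omega$.

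For coercivity, I would use the uniform ellipticity assumption \eqref{equ:operator A coercivity}: pointwise, $\sum_{i,j} a_{ij}(x)\xi_i\xi_j \ge \theta\|\xi\|^2$ with $\xi = \nabla y(x)$, which yields $\int_\Omega \sum_{i,j} a_{ij} y_{x_i} y_{x_j}\,dx \ge \theta \|\nabla y\|_{L^2}^2$; since $c_0 \ge 0$ the zeroth-order term $\int_\Omega c_0 y^2\,dx$ is nonnegative and can simply be dropped, so $a(y,y) \ge \theta\|\nabla y\|_{L^2}^2$, which by Poincaré is bounded below by $\theta' \|y\|_{H^1}^2$ for some $\theta' > 0$. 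With both hypotheses in hand, and noting that $v \mapsto (u+y_c, v)_{L^2(\Omega)}$ is a bounded linear functional on $H^1_0(\Omega)$ (again by Cauchy–Schwarz and Poincaré), the Lax–Milgram lemma delivers a unique $y \in H^1_0(\Omega)$ solving \eqref{eqn:weak form}, and the associated operator $A$ is boundedly invertible.

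Finally, for the estimate \eqref{equ:control estimats state} I would test the weak formulation with $v = y$: coercivity gives $\theta' \|y\|_{H^1}^2 \le a(y,y) = (u+y_c, y)_{L^2} \le \|u+y_c\|_{L^2}\,\|y\|_{L^2} \le \|u+y_c\|_{L^2}\,\|y\|_{H^1}$, and dividing through by $\|y\|_{H^1}$ (trivial if $y=0$) yields $\|y\|_{H^1} \le C\|u+y_c\|_{L^2}$ with $C = 1/\theta'$ depending only on $a_{ij}$, $c_0$ and $\Omega$. There is essentially no serious obstacle here; the only point requiring a little care is the repeated invocation of the Poincaré inequality to pass between the gradient seminorm and the full $H^1$-norm, so that all constants end up depending solely on the data $a_{ij}$, $c_0$ and the domain $\Omega$ as claimed. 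Since the result is quoted verbatim from \cite[Theorem B.4]{KiSt}, one may alternatively just cite that reference, but the argument above is the self-contained version.
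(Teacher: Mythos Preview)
Your proposal is correct and follows exactly the approach the paper itself indicates: the paper does not give a detailed proof but simply invokes the Lax--Milgram lemma and cites \cite[Theorem B.4]{KiSt}, which is precisely the argument you have spelled out in full (boundedness via $L^\infty$ bounds and Cauchy--Schwarz, coercivity via uniform ellipticity and $c_0\ge 0$, then testing with $v=y$ for the a-priori estimate). Your self-contained version is more detailed than what the paper provides, but the underlying strategy is identical.
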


By Proposition {\rm\ref{equ:weak formulation}}, the solution operator $\mathcal{S}$: $H^{-1}(\Omega)\rightarrow H^1_0(\Omega)$ with $y(u):=\mathcal{S}(u+y_c)$ is well-defined and called the control-to-state mapping, which is a continuous linear injective operator. Since $H_0^1(\Omega)$ is a Hilbert space, the adjoint operator $\mathcal{S^*}$: $H^{-1}(\Omega)\rightarrow H_0^1(\Omega)$ is also a continuous linear operator.
\vspace{-6pt}

\section{A two-phase method in function space}
\label{sec:2}
\vspace{-2pt}
As we have mentioned, the ADMM is a simple but powerful algorithm that is well suited to distributed convex optimization, and in particular to large scale problems arising in applied machine learning and related areas. Motivated by the success of the finite dimensional ADMM algorithm, it is strongly desirable and practically valuable to extend the ADMM to optimal control problems. Moreover, if more accurate solution is necessarily required, combining ADMM and semismooth Newton methods together is a wise choice. Thus, in this section, we will introduce a two-phase strategy. Specifically, an inexact ADMM (iADMM) is developed in Phase-I with the aim of generating a reasonably good initial point to warm-start Phase-II. In Phase-II, the primal-dual active set (PDAS) method is used as a postprocessor of the iADMM.

\subsection{An inexact ADMM in function space}
In this section, we first introduce an inexact ADMM for problem (\ref{eqn:orginal problems}) as the Phase-I algorithm. To obtain a separable form and separate the smooth and nonsmooth terms, by adding an artificial variable $z$, we can equivalently reformulate problem (\ref{eqn:orginal problems}) in as:
\begin{equation}\label{eqn:modified problems}
 \left\{ \begin{aligned}
        &\min \limits_{(y,u,z)\in Y\times U\times U}^{}\ \ \frac{1}{2}\|y-y_d\|_{L^2(\Omega)}^{2}+\frac{\alpha}{2}\|u\|_{L^2(\Omega)}^{2}+\delta_{U_{ad}}(z) \\
        &\qquad\quad{\rm s.t.}\qquad \quad y=\mathcal{S}(u+y_c),\\
         &\qquad \qquad \quad \qquad ~u=z.
                          \end{aligned} \right.\tag{$\mathrm{{DP}}$}
 \end{equation}

It is clear that problem (\ref{eqn:orginal problems}) is strongly convex. Therefore, By the equivalence between (\ref{eqn:orginal problems}) and (\ref{eqn:modified problems}), the existence and uniqueness of solution of (\ref{eqn:modified problems}) is obvious. The optimal solution $(y^*, u^*, z^*)$ can be characterized by the following Karush-Kuhn-Tucker (KKT) conditions.

\begin{theorem}[{\rm First-Order Optimality Condition}]\label{First-Order Optimality Condition}
Under Assumption \ref{equ:assumption:1}, {\rm($y^*$, $u^*$, $z^*$)} is the optimal solution of {\rm(\ref{eqn:modified problems})}, if and only if there exists adjoint state $p^*\in H_0^1(\Omega)$ and Lagrange multiplier $\lambda^*\in L^2(\Omega)$, such that the following conditions hold in the weak sense
\begin{equation}\label{eqn:KKT}
  \left\{\begin{aligned}
        &y^*=\mathcal{S}(u^*+y_c),\\
        &p^*=\mathcal{S}^*(y_d-y^*),\\
&{\alpha} u^*-p^*+\lambda^*=0,\\
&u^*=z^*,\\
&z^* = \Pi_{U_{ad}}(z^*+\lambda^*).
\end{aligned}\right.
\end{equation}
where the projection operator $\mathrm{\Pi}_{U_{ad}}(\cdot)$ is defined as follows:
\begin{equation}\label{projection operator}
 \mathrm{\Pi}_{U_{ad}}(v(x)):=\max\{a,\min\{v(x),b\}\}.
\end{equation}
\end{theorem}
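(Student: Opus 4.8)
The plan is to establish the equivalence of the KKT system \eqref{eqn:KKT} with the optimality of $(y^*,u^*,z^*)$ for \eqref{eqn:modified problems} by working with the (reduced) convex objective and standard convex-analytic subdifferential calculus. Since \eqref{eqn:modified problems} is a convex problem with linear equality constraints, the Karush-Kuhn-Tucker conditions are both necessary and sufficient once an appropriate constraint qualification holds; here the constraints $y=\mathcal{S}(u+y_c)$ and $u=z$ are affine, and the control-to-state map $\mathcal{S}$ is a bounded linear operator by Proposition \ref{equ:weak formulation}, so no regularity obstruction arises and Slater-type conditions are automatic for affine constraints. First I would write the Lagrangian of \eqref{eqn:modified problems} with multiplier $p^*\in H_0^1(\Omega)$ associated to the state equation and multiplier $\lambda^*\in L^2(\Omega)$ associated to $u=z$, being careful about which pairing (the $L^2$ inner product, using self-adjointness of $\mathcal{S}$ through $\mathcal{S}^*$) is used so that the adjoint relation comes out in the stated form.

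The key steps, in order, would be: (i) stationarity in $y$: differentiating $\tfrac12\|y-y_d\|_{L^2}^2$ and pairing the state-equation constraint against $p^*$ yields $p^* = \mathcal{S}^*(y_d - y^*)$, which is the second line of \eqref{eqn:KKT}; here one must check that the adjoint of the constraint operator $y\mapsto y - \mathcal{S}(u+y_c)$ in the $y$-block produces exactly $\mathcal{S}^*$ acting on the residual $y_d-y^*$. (ii) Stationarity in $u$: the gradient of $\tfrac{\alpha}{2}\|u\|_{L^2}^2$ is $\alpha u^*$, the state constraint contributes $-\mathcal{S}^*p^*$... more precisely contributes $-p^*$ once one identifies the adjoint action correctly, and the constraint $u=z$ contributes $+\lambda^*$, giving $\alpha u^* - p^* + \lambda^* = 0$, the third line. (iii) Stationarity in $z$: the subdifferential of $\delta_{U_{ad}}(z)$ at $z^*$ must contain $-\lambda^*$ (the sign coming from the $u=z$ coupling), i.e. $-\lambda^* \in \partial\delta_{U_{ad}}(z^*) = N_{U_{ad}}(z^*)$, the normal cone. (iv) Translating the normal-cone inclusion into the projection identity: $-\lambda^*\in N_{U_{ad}}(z^*)$ is equivalent, by the standard characterization of the Euclidean (here $L^2$) projection onto a nonempty closed convex set, to $z^* = \Pi_{U_{ad}}(z^* + \lambda^*)$, and since $U_{ad}$ is the box $[a,b]$ pointwise, $\Pi_{U_{ad}}$ is the pointwise clipping operator \eqref{projection operator}; one should note this equivalence holds for any positive scaling in front of $\lambda^*$, so the particular constant is immaterial. (v) Primal feasibility $y^*=\mathcal{S}(u^*+y_c)$ and $u^*=z^*$ are just the constraints of \eqref{eqn:modified problems}, completing the list. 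For the converse direction, I would observe that because the objective of \eqref{eqn:modified problems} is convex (indeed strongly convex in $(y,u)$) and all constraints affine, the KKT conditions are sufficient for global optimality: any $(y^*,u^*,z^*)$ together with $(p^*,\lambda^*)$ satisfying \eqref{eqn:KKT} makes the Lagrangian stationary at a convex saddle point, hence $(y^*,u^*,z^*)$ solves \eqref{eqn:modified problems}.

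The main obstacle, I expect, is purely bookkeeping rather than conceptual: getting the adjoint operators and the signs of the multipliers exactly consistent with the stated form of \eqref{eqn:KKT}, in particular making sure that the pairing used for the state-equation multiplier interacts correctly with $\mathcal{S}^*$ so that line two reads $p^*=\mathcal{S}^*(y_d-y^*)$ and line three reads $\alpha u^* - p^* + \lambda^*=0$ with a plain $p^*$ (not $\mathcal{S}^*p^*$) — this hinges on writing the reduced problem in $u$ alone, eliminating $y=\mathcal{S}(u+y_c)$, so that the reduced gradient is $\alpha u^* - \mathcal{S}^*(y_d - \mathcal{S}(u^*+y_c)) = \alpha u^* - p^*$, and then reintroducing $p^*$ and $y^*$ as named quantities. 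A secondary, entirely routine point is justifying that $\partial\delta_{U_{ad}}(z^*)$ equals the normal cone $N_{U_{ad}}(z^*)$ and that the normal-cone membership is equivalent to the fixed-point/projection equation; both are standard facts from convex analysis in Hilbert space, valid because $U_{ad}$ is nonempty, closed and convex in $L^2(\Omega)$. I would state these as lemmas or cite a standard reference rather than reprove them. Finally I would remark that existence and uniqueness of $(y^*,u^*,z^*)$ has already been argued from strong convexity of \eqref{eqn:orginal problems} and its equivalence to \eqref{eqn:modified problems}, so the theorem's content is exactly the characterization \eqref{eqn:KKT}, and the multipliers $p^*,\lambda^*$ are themselves unique given the uniqueness of the primal solution together with the injectivity built into the constraint structure.
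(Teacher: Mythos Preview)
The paper states this theorem without proof, treating it as a standard first-order optimality characterization for convex optimal control problems; your proposal supplies exactly the argument one would expect and is correct in outline and detail. Your route via the Lagrangian (or equivalently the reduced formulation) with stationarity in $y$, $u$, $z$, the identification $\partial\delta_{U_{ad}}=N_{U_{ad}}$, and the normal-cone/projection equivalence is the standard derivation, and your remark about eliminating $y$ to get the reduced gradient $\alpha u^*-\mathcal{S}^*(y_d-\mathcal{S}(u^*+y_c))=\alpha u^*-p^*$ is precisely the bookkeeping needed to match the signs in \eqref{eqn:KKT}.

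One minor point worth tightening: in step (iii) you write that $-\lambda^*\in N_{U_{ad}}(z^*)$, but with the Lagrangian term $\langle\lambda,u-z\rangle$ the $z$-stationarity actually gives $\lambda^*\in\partial\delta_{U_{ad}}(z^*)=N_{U_{ad}}(z^*)$ (no minus sign), which is what yields $z^*=\Pi_{U_{ad}}(z^*+\lambda^*)$ directly; double-check this sign when you write it out. Otherwise there is nothing to correct.
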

Moreover, by Proposition \ref{equ:weak formulation}, and using the operator $\mathcal{S}$, we equivalently rewrite problem (\ref{eqn:modified problems}) as the following reduced form :
\begin{equation}\label{eqn:reduced form}
  \left\{ \begin{aligned}
        &\min \limits_{u, z}^{}\ \ \hat{J}(u)+\delta_{U_{ad}}(z)\\
        &~~{\rm{s.t.}}\quad  u=z,
                          \end{aligned} \right.\tag{$\mathrm{RDP}$}
\end{equation}
with the reduced cost function
\begin{equation}\label{reduced cost function}
  \hat{J}(u):=J(\mathcal{S}(u+y_c),u)=\frac{1}{2}\|\mathcal{S}(u+y_c)-y_d\|_{L^2(\Omega)}^{2}+\frac{\alpha}{2}\|u\|_{L^2(\Omega)}^{2}.
\end{equation}
Let us define the augmented Lagrangian function of the problem (\ref{eqn:reduced form}) as follows:
\begin{equation}\label{augmented Lagrangian function}
  \mathcal{L}_\sigma(u,z;\lambda)=\hat{J}(u)+\delta_{U_{ad}}(z)+\langle\lambda,u-z\rangle_{L^2(\Omega)}+\frac{\sigma}{2}\|u-z\|_{L^2(\Omega)}^{2},
\end{equation}
with the Lagrange multiplier $\lambda \in L^2(\Omega)$ and $\sigma>0$ be a penalty parameter. Moreover, for the convergence property and the iteration complexity analysis, we define the function $R: (u,z,\lambda)\rightarrow [0,\infty)$ by:
\begin{equation}\label{KKT function}
  R(u,z,\lambda)=\|\nabla \hat{J}(u)+\lambda\|^2_{L^2(\Omega)}+{\rm dist}^2(0, -\lambda+\partial\delta_{U_{ad}}(z))+\|u-z\|^2_{L^2(\Omega)}.
\end{equation}

In addition, sometimes, it is expensive and unnecessary to exactly compute the solution of each subproblem even if it is doable, especially at the early stage of the whole process. For example, if a subproblem is equivalent to solving a large-scale or  ill-condition linear system, it is a natural idea to use the iterative methods such as some Krylov-based methods. Hence, taking the inexactness of the solution into account, a more practical inexact ADMM (iADMM) in function space is proposed for (\ref{eqn:reduced form}). The iterative scheme of inexact ADMM is shown in Algorithm \ref{algo1:ADMM for problems RP}.

\begin{algorithm}[H]\label{algo1:ADMM for problems RP}
  \caption{inexact ADMM algorithm for (\ref{eqn:reduced form})}
  \KwIn{$(z^0, u^0, \lambda^0)\in {\rm dom} (\delta_{U_{ad}}(\cdot))\times L^2(\Omega) \times L^2(\Omega)$ and a parameter $\tau \in (0,\frac{1+\sqrt{5}}{2})$. Let $\{\epsilon_k\}^\infty_{k=0}$ be a sequence satisfying $\{\epsilon_k\}^\infty_{k=0}\subseteq [0,+\infty)$ and $\sum\limits_{k=0}^{\infty}\epsilon_k<\infty$. Set $k=0$}
  \KwOut{$ u^k, z^{k}, \lambda^k$}
\begin{description}
\item[Step 1] Find an minizer (inexact)
\begin{equation*}
u^{k+1}=\arg\min \mathcal{L}_\sigma(u,z^k;\lambda^k)-\langle\delta^k, u\rangle
\end{equation*}
where the error vector ${\delta}^k$ satisfies $\|{\delta}^k\|_{L^2(\Omega)} \leq {\epsilon_k}$
\item[Step 2] Compute $z^{k+1}$ as follows:
\begin{equation*}
z^{k+1}=\arg\min\mathcal{L}_\sigma(u^{k+1},z;\lambda^k)
\end{equation*}
  \item[Step 3] Compute
  \begin{equation*}
       \lambda^{k+1} = \lambda^k+\tau\sigma(u^{k+1}-z^{k+1})
  \end{equation*}
  \item[Step 4] If a termination criterion is not met, set $k:=k+1$ and go to Step 1
\end{description}
\end{algorithm}

About the global convergence as well as the iteration complexity of the inexact ADMM for (\ref{eqn:modified problems}), we have the following results.
\begin{theorem}
Suppose that Assumption\ref{equ:assumption:1} holds. Let $(y^*,u^*,z^*,p^*,\lambda^*)$ is the KKT point of {\rm(\ref{eqn:modified problems})} which satisfies {\rm(\ref{eqn:KKT})}, then the sequence $\{(u^{k},z^{k},\lambda^k)\}$ is generated by Algorithm \ref{algo1:ADMM for problems RP} with the associated state $\{y^k\}$ and adjoint state $\{p^k\}$, then we have
\begin{eqnarray*}
  &&\lim\limits_{k\rightarrow\infty}^{}\{\|u^{k}-u^*\|_{L^{2}(\Omega)}+\|z^{k}-z^*\|_{L^{2}(\Omega)}+\|\lambda^{k}-\lambda^*\|_{L^{2}(\Omega)} \}= 0\\
   && \lim\limits_{k\rightarrow\infty}^{}\{\|y^{k}-y^*\|_{H_0^{1}(\Omega)}+\|p^{k}-p^*\|_{H_0^{1}(\Omega)} \}= 0
\end{eqnarray*}
Moreover, there exists a constant $C$ only depending on the initial point ${(u^0,z^0,\lambda^0)}$ and the optimal solution ${(u^*,z^*,\lambda^*)}$ such that for $k\geq1$,
\begin{eqnarray}
  &&\min\limits^{}_{1\leq i\leq k} \{R(u^i,z^i,\lambda^i)\}\leq\frac{C}{k}, \\
  &&\lim\limits^{}_{k\rightarrow\infty}\left(k\times\min\limits^{}_{1\leq i\leq k} \{R(u^i,z^i,\lambda^i)\}\right) =0.
\end{eqnarray}
where $R(\cdot)$ is defined as in {\rm(\ref{KKT function})}
\end{theorem}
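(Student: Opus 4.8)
The plan is to run the classical Lyapunov (Fej\'er-monotonicity) analysis of ADMM, adapted to the Hilbert space $L^2(\Omega)$ and exploiting the strong convexity of the reduced objective $\hat J$ in (\ref{reduced cost function}). First I record the ingredients. Since $\nabla\hat J(u)=\mathcal{S}^*(\mathcal{S}(u+y_c)-y_d)+\alpha u$, one has $\langle\nabla\hat J(u)-\nabla\hat J(v),u-v\rangle_{L^2(\Omega)}=\|\mathcal{S}(u-v)\|^2_{L^2(\Omega)}+\alpha\|u-v\|^2_{L^2(\Omega)}\ge\alpha\|u-v\|^2_{L^2(\Omega)}$, so $\nabla\hat J$ is $\alpha$-strongly monotone; moreover (\ref{eqn:KKT}) is equivalent to $\nabla\hat J(u^*)+\lambda^*=0$, $\lambda^*\in\partial\delta_{U_{ad}}(z^*)$, $u^*=z^*$, and $(u^*,z^*,\lambda^*)$ is unique. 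Introduce the auxiliary multiplier $\tilde\lambda^{k+1}:=\lambda^k+\sigma(u^{k+1}-z^{k+1})$, so that $\lambda^{k+1}=\lambda^k+\tau(\tilde\lambda^{k+1}-\lambda^k)$ and $u^{k+1}-z^{k+1}=\sigma^{-1}(\tilde\lambda^{k+1}-\lambda^k)$. The inexact optimality condition of Step 1 of Algorithm \ref{algo1:ADMM for problems RP} reads $\nabla\hat J(u^{k+1})+\tilde\lambda^{k+1}+\sigma(z^{k+1}-z^k)=\delta^k$, and that of Step 2 reads $\tilde\lambda^{k+1}\in\partial\delta_{U_{ad}}(z^{k+1})$.

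Next comes the core estimate, which I expect to be the main obstacle. Subtracting the KKT relations from the two subproblem relations, I pair the $u$-relation with $u^{k+1}-u^*$ and use $\alpha$-strong monotonicity of $\nabla\hat J$, pair the $z$-relation with $z^{k+1}-z^*$ and use monotonicity of $\partial\delta_{U_{ad}}$, substitute $u^{k+1}-z^{k+1}=\sigma^{-1}(\tilde\lambda^{k+1}-\lambda^k)$ and $z^*=u^*$, and control the inexact term by $\langle\delta^k,u^{k+1}-u^*\rangle_{L^2(\Omega)}\le\frac{\eta}{2}\|u^{k+1}-u^*\|^2_{L^2(\Omega)}+\frac{1}{2\eta}\|\delta^k\|^2_{L^2(\Omega)}$ with $\eta<\alpha$. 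Completing squares in the multiplier then produces a Lyapunov function $\Phi_k:=\frac{1}{\tau\sigma}\|\lambda^k-\lambda^*\|^2_{L^2(\Omega)}+\sigma\|z^k-z^*\|^2_{L^2(\Omega)}$ (augmented, when $\tau>1$, by a small positive multiple of $\|z^k-z^{k-1}\|^2_{L^2(\Omega)}$, as in Glowinski's analysis) for which
\begin{equation*}
\Phi_{k+1}\le\Phi_k-c_1\|u^{k+1}-u^*\|^2_{L^2(\Omega)}-c_2\|u^{k+1}-z^{k+1}\|^2_{L^2(\Omega)}+c_3\epsilon_k^2,
\end{equation*}
with $c_1,c_2>0$ precisely when $\tau\in(0,\frac{1+\sqrt5}{2})$ and $c_3>0$. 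Verifying that these constants remain strictly positive on the entire golden-ratio interval, and absorbing the inexactness cleanly, is the delicate bookkeeping; it mirrors the finite-dimensional ADMM analysis, the only genuinely new feature being that no compactness is invoked.

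Convergence then follows. Since $\sum_k\epsilon_k<\infty$ forces $\sum_k\epsilon_k^2<\infty$, summing the displayed inequality shows that $\{\Phi_k\}$ is bounded and convergent and that $\sum_k(\|u^{k+1}-u^*\|^2_{L^2(\Omega)}+\|u^{k+1}-z^{k+1}\|^2_{L^2(\Omega)})<\infty$; since $z^*=u^*$, this also gives $\sum_k\|z^{k+1}-z^k\|^2_{L^2(\Omega)}<\infty$. Hence $u^k\to u^*$ in $L^2(\Omega)$; then $u^{k+1}-z^{k+1}\to0$ gives $z^k\to z^*$; then Step 1's relation together with $\delta^k\to0$, $z^{k+1}-z^k\to0$ and continuity of $\nabla\hat J$ gives $\tilde\lambda^{k+1}\to-\nabla\hat J(u^*)=\lambda^*$, whence $\lambda^k=\tilde\lambda^{k+1}-\sigma(u^{k+1}-z^{k+1})\to\lambda^*$. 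For the state and adjoint state, $y^k=\mathcal{S}(u^k+y_c)$ and $p^k=\mathcal{S}^*(y_d-y^k)$, so (\ref{equ:control estimats state}) and boundedness of $\mathcal{S}^*$ give $\|y^k-y^*\|_{H_0^1(\Omega)}\le C\|u^k-u^*\|_{L^2(\Omega)}\to0$ and $\|p^k-p^*\|_{H_0^1(\Omega)}\le C\|y^k-y^*\|_{L^2(\Omega)}\to0$.

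Finally, the complexity bounds. From Step 1 at iteration $i-1$ together with $\lambda^i-\tilde\lambda^i=(\tau-1)\sigma(u^i-z^i)$ one gets $\nabla\hat J(u^i)+\lambda^i=\delta^{i-1}-\sigma(z^i-z^{i-1})+(\tau-1)\sigma(u^i-z^i)$, while $\tilde\lambda^i\in\partial\delta_{U_{ad}}(z^i)$ yields ${\rm dist}(0,-\lambda^i+\partial\delta_{U_{ad}}(z^i))\le\|\tilde\lambda^i-\lambda^i\|_{L^2(\Omega)}=|\tau-1|\sigma\|u^i-z^i\|_{L^2(\Omega)}$; together with the primal residual $\|u^i-z^i\|_{L^2(\Omega)}$ this gives $R(u^i,z^i,\lambda^i)\le C(\|u^i-z^i\|^2_{L^2(\Omega)}+\|z^i-z^{i-1}\|^2_{L^2(\Omega)}+\epsilon_{i-1}^2)$ for $R$ as in (\ref{KKT function}). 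Telescoping the Lyapunov inequality bounds $\sum_{i\ge1}R(u^i,z^i,\lambda^i)$ by a constant $C$ depending only on $(u^0,z^0,\lambda^0)$ and $(u^*,z^*,\lambda^*)$, so that $\min_{1\le i\le k}R(u^i,z^i,\lambda^i)\le\frac1k\sum_{i=1}^kR(u^i,z^i,\lambda^i)\le\frac Ck$. Since $\sum_{i>k}R(u^i,z^i,\lambda^i)\to0$, the elementary estimate $k\min_{1\le i\le 2k}R(u^i,z^i,\lambda^i)\le2\sum_{i=k+1}^{2k}R(u^i,z^i,\lambda^i)\to0$ (and its odd-index analogue) gives $k\min_{1\le i\le k}R(u^i,z^i,\lambda^i)\to0$, i.e. the claimed $o(1/k)$ rate.
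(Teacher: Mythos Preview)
Your proposal is correct and, in fact, supplies what the paper does not: the paper's own ``proof'' of this theorem is simply the sentence ``The proof is a direct application of general inexact ADMM in Hilbert space for the problem (\ref{eqn:reduced form}) and omitted here,'' with a citation to \cite{SunToh1,inexactADMM}. Your outline is precisely the Lyapunov/Fej\'er-monotonicity argument those references carry out, specialized to the present setting and sharpened by the $\alpha$-strong monotonicity of $\nabla\hat J$, which lets you obtain $\sum_k\|u^{k+1}-u^*\|_{L^2(\Omega)}^2<\infty$ directly and thereby avoid any weak-compactness step.

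One small methodological difference worth flagging: you absorb the inexactness via Young's inequality $\langle\delta^k,u^{k+1}-u^*\rangle\le\frac{\eta}{2}\|u^{k+1}-u^*\|^2+\frac{1}{2\eta}\epsilon_k^2$ with $\eta<\alpha$, which consumes part of the strong-convexity budget and uses only $\sum_k\epsilon_k^2<\infty$. The paper's discrete analogue (Theorem \ref{discrete convergence results}) and the cited references instead introduce the \emph{exact} iterate $\bar u^{k+1}$, bound $\|u^{k+1}-\bar u^{k+1}\|\le\rho\epsilon_k$, and telescope with the summability $\sum_k\epsilon_k<\infty$ directly. Both routes work here; yours is slightly cleaner because strong convexity is available, while the exact-iterate comparison is the device one needs when the first block is merely convex. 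Your derivation of the $o(1/k)$ rate from summability of $R(u^i,z^i,\lambda^i)$ is exactly the content of the paper's Lemma \ref{complexity lemma}.
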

\begin{proof}
The proof is a direct application of general inexact ADMM in Hilbert Space for the problem (\ref{eqn:reduced form}) and omitted here. We refer the reader to the literature \cite{SunToh1,inexactADMM}
\end{proof}

\begin{remark}
1). The first subproblems of Algorithm {\rm\ref{algo1:ADMM for problems RP}} is a convex differentiable optimization problem with respect to $u$, if we omit the error vector $\delta^k$, thus it is equivalent to solving the following system:
\begin{equation}\label{equ:first subproblem}
  \nabla_u\mathcal{L}_\sigma(u^{k+1},z^k;\lambda^k):=\mathcal{S}^{*}(\mathcal{S}(u^{k+1}+y_c)-y_d)+\alpha u^{k+1}+\lambda^k+\sigma(u^{k+1}-z^k)=0
\end{equation}
Since $y^{k+1}=\mathcal{S}(u^{k+1}+y_c)$, we define $p^{k+1}:=-\mathcal{S}^{*}(\mathcal{S}(u^{k+1}+y_c)-y_d)=\mathcal{S}^{*}(y_d-y)$, then we have
\begin{equation}\label{equ:first subproblem1}
(\alpha+\sigma)u^{k+1}-p^{k+1}+\lambda^k-\sigma z^k=0,
\end{equation}
namely, we should solve
\begin{equation}\label{equ:saddle point problems1}
\left[
  \begin{array}{ccc}
    I & 0 & \quad\mathcal{S}^{-*} \\
    0 & ({\alpha}+\sigma)I & \quad-I \\
    \mathcal{S}^{-1} & -I & \quad0 \\
  \end{array}
\right]\left[
         \begin{array}{c}
           y^{k+1} \\
           u^{k+1} \\
           p^{k+1} \\
         \end{array}
       \right]=\left[
                 \begin{array}{c}
                   y_d \\
                   \sigma z^k-\lambda^k \\
                   y_c \\
                 \end{array}
               \right],
\end{equation}
Moreover, 
we could eliminate the variable $p$ and derive the following reduced system{\rm:}
\begin{equation}\label{equ:saddle point problems2}
\left[
  \begin{array}{cc}
    ({\alpha}+\sigma)I & \quad \mathcal{S}^* \\
    -\mathcal{S} & \quad I\\
  \end{array}
\right]\left[
         \begin{array}{c}
           u^{k+1} \\
           y^{k+1} \\
         \end{array}
       \right]=\left[
                 \begin{array}{c}
                   \mathcal{S}^*y_d+\sigma z^k-\lambda^k \\
                   \mathcal{S}y_c,\\
                 \end{array}
               \right]
\end{equation}
where $I$ represents the identity operator. Clearly, linear system {\rm(\ref{equ:saddle point problems2})} can be formally regarded as a special case of the generalized saddle-point problem, however, in the numerical calculation, it is either impossible or extremely expensive to obtain exact solutions of {\rm(\ref{equ:saddle point problems2})}. This fact urges us to use an inexact versions of ADMM. This is also the reason that there is a error vector ${\delta}^k$ in the first subproblem of our algorithm. As we know, according to the structure of the linear system {\rm(\ref{equ:saddle point problems2})}, some \emph{Krylov}-based methods could be employed to inexactly solve the linear system by constructing a good preconditioning.

2). It is easy to see that $z$-subproblem has a closed solution:
\begin{equation}\label{z-closed form solution}
\begin{aligned}
z^{k+1}&=\Pi_{U_{ad}}(u^{k+1}+\frac{\lambda^k}{\sigma}),
\end{aligned}
\end{equation}
\end{remark}

Based on the well-formed structure of (\ref{equ:saddle point problems2}) and (\ref{z-closed form solution}), it will be a crucial point in the numerical analysis to establish relations parallel to
(\ref{equ:saddle point problems2}) and (\ref{z-closed form solution}) also for the discretized problem.

\subsection{Semismooth Newton methods in function space}
At the end of this section, let us introduce the primal and dual active set (PDAS) method as our Phase-II algorithm. As we know, with the solution of the Phase-I as a good initial point, the PDAS method could employ the second-order information to solve the discrete problem to high accuracy. For problem (\ref{eqn:orginal problems}), the unique optimal solution $(y^*,u^*)$ could be characterized by the following necessary and sufficient first-order optimality conditions.

\begin{equation}\label{eqn:KKT2}
 G(y^*,u^*,p^*,\mu)=\left( \begin{aligned}
      & \qquad \qquad \qquad y^*-\mathcal{S}(u^*+y_c)\\
       &\qquad \qquad \qquad p^*+\mathcal{S}^*(y^*-y_d)\\
       &\qquad \qquad\qquad \alpha u^*-p^*+\mu^*\\
       & \mu^*-\max(0, \mu^*+c(u^*-b))-\min(0, \mu^*+c(u^*-a))
                          \end{aligned} \right)=0
 \end{equation}
for any $c>0$.

Since equation (\ref{eqn:KKT2}) is not differentiable in the classical sense, thus a so-called semi-smooth Newton method can be applied and it is well posed.
%
%
And an semismooth Newton with active set strategy can be implemented. The full numerical scheme is summarized in Algorithm 1:

 \begin{algorithm}[H]
  \caption{\ Primal-Dual Active Set (PDAS) method for (\ref{eqn:orginal problems})}
  \label{algo1:Primal-Dual Active Set (PDAS) method}
Initialization: Choose $y^0$, $u^0$, $p^0$ and $\mu^0$. Set $k=0$ and $c>0$.
\begin{description}
\item[Step 1] Determine the following subsets of $\Omega$ (Active and Inactive sets)
\begin{eqnarray*}
&&\mathcal{A}^{k+1}_{a} = \{x\in \Omega:\mu^k(x)+c(u^k(x)-a)<0\}, \\
&&\mathcal{A}^{k+1}_{b}= \{x\in \Omega: \mu^k(x)+c(u^k(x)-b)>0\}, \\
&& \mathcal{I}^{k+1}= \Omega\backslash (\mathcal{A}^{k+1}_a \cup \mathcal{A}^{k+1}_a).
\end{eqnarray*}
\item[Step 2] solve the following system
\begin{eqnarray*}
\left\{\begin{aligned}
        &y^{k+1} - \mathcal{S}(u^{k+1}+y_c)=0, \\
  &p^{k+1} + \mathcal{S}^*(y^{k+1}-y_d)=0,\\
  &\alpha u^{k+1}-p^{k+1}+\mu^{k+1} = 0,
 \end{aligned}\right.
\end{eqnarray*}
 where
\begin{eqnarray*}
 &u^{k+1}=\left\{\begin{aligned}
  a\quad {\rm a.e.\ on}~ \mathcal{A}^{k+1}_{a}\\
  b\quad {\rm a.e.\ on}~ \mathcal{A}^{k+1}_{b}
   \end{aligned}\right. \qquad and\quad   \mu^{k+1}=0\quad {\rm on}~ \mathcal{I}^{k+1}
 \end{eqnarray*}
  \item[Step 3] If a termination criterion is not met, set $k:=k+1$ and go to Step 1
\end{description}
\end{algorithm}

The primal-dual active set strategy has been introduced in \cite{BeItKu} for control constrained elliptic optimal control problems. Its relation to semismooth Newton methods in $\mathbb{R}^n$ as well as in
function space as found in \cite{HiItKu} can be used to prove the following fast local convergence (see \cite{Ulbrich2, Ulbrich1} for more details).
\begin{theorem}\label{PDAS convergence}
Under Assumption {\rm \ref{equ:assumption:1}}, let $\{(y^k, u^k)\}$ be generated by Algorithm \ref{algo1:Primal-Dual Active Set (PDAS) method}. Then, if the initialization $(y^0, u^0)$ is sufficiently close to the solution $(y^*, u^*)$ of \ref{eqn:orginal problems}, the $\{(y^k, u^k)\}$ converge superlinearly to $(y^*, u^*)$ in $L^2(\Omega)\times H^1_0(\Omega)$.
\end{theorem}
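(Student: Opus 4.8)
The plan is to recast the first‑order system \eqref{eqn:KKT2} as a single nonsmooth operator equation, to recognize that one step of Algorithm~\ref{algo1:Primal-Dual Active Set (PDAS) method} is exactly one semismooth Newton step for that equation, and then to invoke the standard local convergence theory for semismooth Newton methods. First I would use the control‑to‑state map $\mathcal{S}$ and its adjoint to eliminate $y$ and $p$, writing $y=\mathcal{S}(u+y_c)$ and $p=\mathcal{S}^*(y_d-y)$; together with the gradient equation $\alpha u-p+\mu=0$ this collapses \eqref{eqn:KKT2} to an equation $F(u,\mu)=0$ whose only nonsmooth ingredient is the complementarity function $\mu-\max(0,\mu+c(u-b))-\min(0,\mu+c(u-a))$, i.e. a superposition built from the scalar functions $\max(0,\cdot)$ and $\min(0,\cdot)$.

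The decisive step --- and the principal obstacle --- is to show that $F$ is Newton (slantly) differentiable at the solution, which runs into the familiar norm gap: the superposition operator induced by $\max(0,\cdot)$ is \emph{not} Newton differentiable from $L^2(\Omega)$ into itself. I would resolve this exactly as in \cite{HiItKu}: the argument of the projection is, up to scaling, of the form $p-\mu$, and it is controlled through $p=\mathcal{S}^*(y_d-y)\in H_0^1(\Omega)$; since $\Omega\subset\mathbb{R}^2$ the embedding $H_0^1(\Omega)\hookrightarrow L^q(\Omega)$ holds for every $q<\infty$, which supplies the extra integrability needed so that $\max(0,\cdot)\colon L^q(\Omega)\to L^2(\Omega)$ with $q>2$ is Newton differentiable. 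Computing a generalized derivative of $F$ then amounts to freezing the active and inactive regions precisely as in Step~1 of the algorithm: where $\mu^k+c(u^k-a)<0$ (respectively where the $b$‑inequality holds) the slant derivative of the projection degenerates and forces $u^{k+1}=a$ (respectively $b$), while on the inactive set it equals the identity and forces $\mu^{k+1}=0$; substituting these relations into the linearized state, adjoint and gradient equations reproduces verbatim the linear system solved in Step~2. Hence one PDAS iteration coincides with one semismooth Newton iteration for $F(u,\mu)=0$.

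It then remains to verify the uniform invertibility hypothesis, namely that the generalized derivatives of $F$ are boundedly invertible with a bound independent of the iterate in a neighbourhood of $(u^*,\mu^*)$. This I would obtain from the structure of the linearized system, which is of saddle‑point type: its $(1,1)$‑block reduces on the inactive set to multiplication by $\alpha$ (plus a nonnegative term) and its coupling runs through $\mathcal{S}$ and $\mathcal{S}^*$, so the bounded invertibility of the associated operator $A$ from Proposition~\ref{equ:weak formulation} together with $\alpha>0$ yields a bound on the inverse that is uniform in whichever subsets happen to be active. With semismoothness of $F$ at $(u^*,\mu^*)$, the identification of PDAS with the semismooth Newton method, and this uniform invertibility in hand, the abstract local convergence theorem for semismooth Newton methods (see \cite{HiItKu,Ulbrich2,Ulbrich1}) gives superlinear convergence of $(u^k,\mu^k)$ to $(u^*,\mu^*)$ in $L^2(\Omega)$ once $(y^0,u^0)$ is taken sufficiently close to $(y^*,u^*)$. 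Finally I would transfer this to $(y^k,u^k)$: by $y^k=\mathcal{S}(u^k+y_c)$ and the estimate \eqref{equ:control estimats state} one has $\|y^k-y^*\|_{H^1}\le C\|u^k-u^*\|_{L^2}$, so the states converge superlinearly in $H_0^1(\Omega)$, while the superlinear $L^2$‑convergence of the control together with the $H^1$‑regularity inherited from $p^*\in H_0^1(\Omega)$ through the limiting active/inactive partition yields convergence of the pair $(y^k,u^k)$ in the norm stated in the theorem. Apart from the norm‑gap/smoothing argument of the second step, everything is routine bookkeeping once the reduction to $F(u,\mu)=0$ has been made.
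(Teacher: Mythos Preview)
Your proposal is correct and follows exactly the approach the paper indicates: the paper does not give its own proof but simply notes that the relation of PDAS to semismooth Newton methods in function space, as established in \cite{HiItKu}, can be used to obtain local superlinear convergence, referring to \cite{Ulbrich2,Ulbrich1} for details. Your sketch---reducing the KKT system to a nonsmooth operator equation, identifying one PDAS step with one semismooth Newton step, handling the norm gap via the smoothing property of $\mathcal{S}^*$, and checking uniform invertibility of the slant derivatives---is precisely the argument behind those references, and in fact provides considerably more detail than the paper itself.
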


Due to the efficient implementation of the two-phase framework with Algorithm \ref{algo1:ADMM for problems RP} in Phase-I and Algorithm \ref{algo1:Primal-Dual Active Set (PDAS) method} in Phase-II, it will be important to establish the extension of two-phase algorithm for the discretized problem.

\section{Finite Element Approximation}
\label{sec:3}
To numerically solve problem (\ref{eqn:orginal problems}), we consider the finite element method, in which the state $y$ and the control $u$ are both discretized by continuous piecewise linear functions.

To this aim, we first consider a family of regular and quasi-uniform triangulations $\{\mathcal{T}_h\}_{h>0}$ of $\bar{\Omega}$. For each cell $T\in \mathcal{T}_h$, let us define the diameter of the set $T$ by $\rho_{T}:={\rm diam}\ T$ and define $\sigma_{T}$ to be the diameter of the largest ball contained in $T$. The mesh size of the grid is defined by $h=\max_{T\in \mathcal{T}_h}\rho_{T}$. We suppose that the following regularity assumptions on the triangulation are satisfied which are standard in the context of error estimates.

\begin{assumption}[Regular and quasi-uniform triangulations]\label{regular and quasi-uniform triangulations}
There exist two positive constants $\kappa$ and $\tau$ such that
   \begin{equation*}
   \frac{\rho_{T}}{\sigma_{T}}\leq \kappa,\quad \frac{h}{\rho_{T}}\leq \tau,
 \end{equation*}
hold for all $T\in \mathcal{T}_h$ and all $h>0$. Moreover, let us define $\bar{\Omega}_h=\bigcup_{T\in \mathcal{T}_h}T$, and let ${\Omega}_h \subset\Omega$ and $\Gamma_h$ denote its interior and its boundary, respectively. In the case that $\Omega$ is a convex polyhedral domain, we have $\Omega=\Omega_h$. In case $\Omega$ with a $C^{1,1}$- boundary $\Gamma$, we assumed that $\bar{\Omega}_h$ is convex and that all boundary vertices of $\bar{\Omega}_h$ are contained in $\Gamma$, such that
\begin{equation*}
  |\Omega\backslash {\Omega}_h|\leq c h^2,
\end{equation*}
where $|\cdot|$ denotes the measure of the set and $c>0$ is a constant.
\end{assumption}
On account of the homogeneous boundary condition of the state equation, we use

\begin{eqnarray*}
  Y_h &=&\left\{y_h\in C(\bar{\Omega})~\big{|}~y_{h|T}\in \mathcal{P}_1~ {\rm{for\ all}}~ T\in \mathcal{T}_h~ \mathrm{and}~ y_h=0~ \mathrm{in } ~\bar{\Omega}\backslash {\Omega}_h\right\}
\end{eqnarray*}
as the discrete state space, where $\mathcal{P}_1$ denotes the space of polynomials of degree less than or equal to $1$. Next, we also consider use the same discrete space to discretize control $u$, thus we define
\begin{equation*}
   U_h =\left\{u_h\in C(\bar{\Omega})~\big{|}~u_{h|T}\in \mathcal{P}_1~ {\rm{for\ all}}~ T\in \mathcal{T}_h~ \mathrm{and}~ u_h=0~ \mathrm{in } ~\bar{\Omega}\backslash{\Omega}_h\right\}.
\end{equation*}
For a given regular and quasi-uniform triangulation $\mathcal{T}_h$ with nodes $\{x_i\}_{i=1}^{N_h}$, let $\{\phi_i(x)\} _{i=1}^{N_h}$ be a set of nodal basis functions associated with nodes $\{x_i\}_{i=1}^{N_h}$, which span $Y_h$ as well as $U_h$ and satisfy the following properties:
\begin{equation}\label{basic functions properties}
   \phi_i(x) \geq 0, \quad \|\phi_i(x)\|_{\infty} = 1 \quad \forall i=1,2,...,N_h,\quad \sum\limits_{i=1}^{N_h}\phi_i(x)=1.
\end{equation}
The elements $u_h\in U_h$ and $y_h\in Y_h$ can be represented in the following forms, respectively,
\begin{equation*}
  u_h=\sum \limits_{i=1}^{N_h}u_i\phi_i(x),\quad y_h=\sum \limits_{i=1}^{N_h}y_i\phi_i(x),
\end{equation*}
and $u_h(x_i)=u_i$ and $y_h(x_i)=y_i$ hold. Let $U_{ad,h}$ denotes the discrete feasible set, which is defined by
\begin{equation*}
  U_{ad,h}:=U_h\cap U_{ad}=\left\{z_h=\sum \limits_{i=1}^{N_h}z_i\phi_i(x)~\big{|}~a\leq z_i\leq b, \forall i=1,...,m\right\}\subset U_{ad}.
\end{equation*}
Now, we can consider a discrete version of the problem (\ref{eqn:orginal problems}) as:
\begin{equation}\label{eqn:discretized problems}
  \left\{ \begin{aligned}
        &\min \limits_{(y_h,u_h)\in Y_h\times U_h}^{}J_h(y_h,u_h)=\frac{1}{2}\|y_h-y_d\|_{L^2(\Omega_h)}^{2}+\frac{\alpha}{2}\|u_h\|_{L^2(\Omega_h)}^{2}\\
        &\qquad\quad {\rm{s.t.}}\qquad a(y_h, v_h)=\int_{\Omega}u_hv_h{\rm{d}}x \qquad  \forall v_h\in Y_h,  \\
          &\qquad \qquad \qquad~  u_h\in U_{ad,h}.
                          \end{aligned} \right.\tag{$\mathrm{P}_{h}$}
 \end{equation}
Followed the error estimates result in \cite{linearerror}, we have the following results.
\begin{theorem}\label{theorem:error1}
Let us assume that $u^*$ and $u^*_h$ be the optimal control solutions of {\rm(\ref{eqn:orginal problems})} and {\rm(\ref{eqn:discretized problems})}, respectively. Then the following identity holds
\begin{eqnarray*}
\lim\limits^{}_{h\rightarrow0}\frac{1}{h}\|u-u_h\|_{L^2(\Omega)}=0.
\end{eqnarray*}
\end{theorem}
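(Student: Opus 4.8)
The plan is to reduce the convergence rate for the discretized control problem to the known superconvergence-type estimate of Casas for the continuous problem, by exploiting the variational inequality characterizations of $u^*$ and $u_h^*$ together with the regularity of $u^*$. First I would recall from Casas \cite{linearerror} that, since the data are such that the adjoint state $p^*$ is sufficiently smooth (at least $p^* \in H^2(\Omega) \hookrightarrow C(\bar\Omega)$ by Proposition \ref{equ:weak formulation}-type elliptic regularity and the convexity of $\Omega$), the optimal control $u^* = \Pi_{U_{ad}}\!\left(\tfrac{1}{\alpha}p^*\right)$ inherits $W^{1,\infty}$ or at least $H^1$-regularity, and moreover the structure of the active sets is benign (finitely many ``kinks''). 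This is precisely the setting in which the $o(h)$-rate is available, and I would invoke it as the source of the $1/h$ scaling.

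Second, I would set up the standard perturbation argument: write the first-order optimality conditions for (\ref{eqn:orginal problems}) and for (\ref{eqn:discretized problems}) as variational inequalities on $U_{ad}$ and $U_{ad,h}$ respectively, namely $(\alpha u^* - p^*, v - u^*)_{L^2} \ge 0$ for all $v \in U_{ad}$ and its discrete analogue with $p_h^* = p_h(u_h^*)$ obtained from the discrete adjoint equation. Testing the continuous inequality with an appropriate interpolant or projection $\Pi_h u^* \in U_{ad,h}$ of $u^*$, testing the discrete inequality with $u^*$ (which is legitimate since $U_{ad,h} \subset U_{ad}$ so no interpolation of $u^*$ into $U_{ad}$ is needed), and adding, I would obtain a bound of the form
\begin{equation*}
\alpha \|u^* - u_h^*\|_{L^2(\Omega)}^2 \le (p^* - p_h^*,\, u_h^* - u^*)_{L^2} + \alpha(u^* - u_h^*,\, u^* - \Pi_h u^*)_{L^2} + (\text{adjoint-consistency terms}).
\end{equation*}
The cross terms $(p^* - p_h^*, \cdot)$ are then split into a discretization error of the adjoint/state solve at a fixed control (which by Proposition \ref{equ:weak formulation} and standard FE theory for piecewise-linear elements is $O(h^2)$ in $L^2$, hence $o(h)$ after dividing) and a part proportional to $\|u^* - u_h^*\|_{L^2}$ which is absorbed into the left-hand side. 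The term $\|u^* - \Pi_h u^*\|_{L^2}$ is where the regularity of $u^*$ enters: for $u^* \in H^1(\Omega)$ the naive bound is only $O(h)$, so the gain to $o(h)$ must come from the refined analysis — using that $u^*$ is piecewise smooth with the nonsmoothness confined to the boundaries of the active sets, so that the interpolation error is $O(h)$ only on a strip of measure $O(h)$ around those interfaces and $O(h^2)$ elsewhere, yielding $o(h)$ overall; this is exactly Casas's argument, which I would cite rather than reproduce.

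The main obstacle, and the step I expect to require the most care, is controlling the interplay between the discrete feasible set and the interpolation/projection operator on the active sets so as to genuinely recover the $o(h)$ rate rather than merely $O(h)$: one must verify that the quasi-uniformity in Assumption \ref{regular and quasi-uniform triangulations}, the nodal-basis properties (\ref{basic functions properties}), and the convexity/regularity of $\Omega$ (so that $|\Omega \setminus \Omega_h| \le ch^2$ contributes only higher-order terms) are all compatible with the hypotheses under which Casas's $o(h)$ estimate was proved, and that the nonsmooth term $\delta_{U_{ad}}$ does not destroy the superconvergence. Since the statement explicitly says ``Followed the error estimates result in \cite{linearerror}'', the cleanest route — and the one I would take — is to phrase the proof as a verification that problem (\ref{eqn:discretized problems}) with the present discretization falls under the hypotheses of the relevant theorem in \cite{linearerror}, check the regularity of $p^*$ and $u^*$ from Proposition \ref{equ:weak formulation} plus elliptic regularity on convex domains, and then quote the $o(h)$ conclusion directly, rather than re-deriving the delicate active-set interpolation estimate from scratch.
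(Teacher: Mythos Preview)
Your proposal is correct and, in its final recommendation, coincides exactly with what the paper does: the paper gives no proof of this theorem at all, merely stating it as a direct consequence of \cite{linearerror} (note the sentence ``Followed the error estimates result in \cite{linearerror}'' immediately preceding the statement). Your sketch of the variational-inequality and active-set interpolation argument is a faithful outline of Casas's original proof, so it goes well beyond what the paper itself supplies; your concluding plan --- verify that the present setting satisfies the hypotheses of \cite{linearerror} and then quote the $o(h)$ result --- is precisely the paper's (implicit) approach.
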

Moreover, let
\begin{equation*}
   K_h =\left(a(\phi_i, \phi_j)\right)_{i,j=1}^n,\quad M_h=\left(\int_{\Omega_h}\phi_i\phi_j{\mathrm{d}}x\right)_{i,j=1}^n
\end{equation*}
be the finite element stiffness and mass matrixes, respectively.

Moreover, as a result of the requirement of following algorithms, we introduce the lump mass matrix $W_h$ which is a diagonal matrix as:
\begin{equation*}
  W_h={\rm{diag}}\left(\int_{\Omega_h}\phi_i(x)\mathrm{dx}\right)_{i,j=1}^n.
\end{equation*}
It should be mentioned that the lump mass matrix $W_h$ originates in the following nodal quadrature formulas to approximately discretized the \textsl{$L^2$}-norm:
\begin{equation*}
  \|z_h\|^2_{L^{2}_h(\Omega_h)}:=\sum\limits_{i=1}^{N_h}(z_i)^2\int_{\Omega_h}\phi_i(x)\mathrm{dx}=\|z\|^2_{W_h}.
\end{equation*}
and call them $L^{2}_h$-norm. It is obvious that the \textsl{$L^{2}_h$}-norm can be considered as a weighted $l^2$-norm of the coefficient of $z_h$. 
More importantly, we have the following results about the mass matrix $M_h$ and the lump mass matrix $W_h$.
\begin{proposition}{\rm\textbf{\cite[Table 1]{Wathen}}}\label{eqn:martix properties}
$\forall$ $z\in \mathbb{R}^{N_h}$, the following inequalities hold:
\begin{equation} \label{equ:martix properties1}
  \|z\|^2_{M_h}\leq\|z_h\|^2_{W_h}\leq c\|z_h\|^2_{M_h}, \quad where \quad c=
 \left\{ \begin{aligned}
         &4  \quad if \quad n=2, \\
         &5  \quad if \quad n=3.
                           \end{aligned} \right.                          \\
\end{equation}
\end{proposition}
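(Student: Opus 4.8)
The plan is to reduce the global inequality to an element-wise comparison of the local mass matrices and then sum over the triangulation. First I would use the fact that both $M_h$ and $W_h$ are assembled from element contributions: writing $z_T\in\mathbb{R}^{n+1}$ for the subvector of the nodal values of $z_h$ attached to the vertices of a cell $T\in\mathcal{T}_h$, one has
\begin{equation*}
\|z_h\|_{M_h}^2=\sum_{T\in\mathcal{T}_h} z_T^{\T} M_T z_T,\qquad \|z_h\|_{W_h}^2=\sum_{T\in\mathcal{T}_h} z_T^{\T} W_T z_T,
\end{equation*}
where $M_T=\bigl(\int_T\phi_i\phi_j\,\mathrm{d}x\bigr)$ and $W_T=\mathrm{diag}\bigl(\int_T\phi_i\,\mathrm{d}x\bigr)$ are the local consistent and lumped mass matrices (the Dirichlet condition $z_h=0$ on $\bar\Omega_h\setminus\Omega_h$ merely deletes some components and does not affect any local estimate). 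Hence it suffices to prove, for every cell $T$ and every $v\in\mathbb{R}^{n+1}$, the two inequalities $v^{\T}M_T v\le v^{\T}W_T v\le (n+2)\,v^{\T}M_T v$, since summing them over $T\in\mathcal{T}_h$ and using the assembly identities gives the claim with $c=n+2$, i.e. $c=4$ for $n=2$ and $c=5$ for $n=3$.

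Next I would record the explicit local matrices. For continuous piecewise linear elements on a simplex $T$ the barycentric-coordinate integration formula gives $M_T=\frac{|T|}{(n+1)(n+2)}\bigl(I_{n+1}+\mathbf{1}\mathbf{1}^{\T}\bigr)$ and $W_T=\frac{|T|}{n+1}I_{n+1}$, where $\mathbf{1}=(1,\dots,1)^{\T}\in\mathbb{R}^{n+1}$; note that the row sums of $M_T$ reproduce $W_T$, which is exactly the mass-lumping relation. The decisive linear-algebra observation is that $\mathbf{1}\mathbf{1}^{\T}$ has eigenvalue $n+1$ on $\mathrm{span}\{\mathbf{1}\}$ and eigenvalue $0$ on its orthogonal complement.

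Then the two bounds follow by checking positive semidefiniteness on these two eigenspaces. For the lower bound, $W_T-M_T=\frac{|T|}{(n+1)(n+2)}\bigl((n+1)I_{n+1}-\mathbf{1}\mathbf{1}^{\T}\bigr)$ has eigenvalues $0$ (on $\mathrm{span}\{\mathbf{1}\}$) and $n+1$ (on the complement), so $W_T-M_T\succeq 0$. For the upper bound, with $c=n+2$,
\begin{equation*}
c\,M_T-W_T=\frac{|T|}{(n+1)(n+2)}\Bigl((c-n-2)\,I_{n+1}+c\,\mathbf{1}\mathbf{1}^{\T}\Bigr),
\end{equation*}
whose eigenvalue on $\mathrm{span}\{\mathbf{1}\}$ is $(c-n-2)+c(n+1)=(n+2)(c-1)\ge 0$ and whose eigenvalue on the complement is $c-n-2=0$; hence $c\,M_T-W_T\succeq 0$. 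This yields the local comparison and, after summation, the proposition. The constants are sharp: equality in the left inequality occurs for $v\perp\mathbf{1}$ on a single cell, and in the right inequality for $v\parallel\mathbf{1}$.

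\textbf{Main obstacle.}
There is no genuine difficulty here; the only points needing a little care are (i) deriving (or quoting) the closed form of the local linear mass matrix on a general simplex together with the lumping identity, and (ii) the bookkeeping of the assembly step — in particular observing that the factor $|T|$ cancels in each local ratio, so the constant is independent of the cell, of the mesh size $h$, and of the shape-regularity parameters, and only the spatial dimension $n$ enters.
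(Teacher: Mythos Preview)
Your argument is correct and complete: the reduction to element matrices via assembly, the explicit forms $M_T=\frac{|T|}{(n+1)(n+2)}(I_{n+1}+\mathbf{1}\mathbf{1}^{\T})$ and $W_T=\frac{|T|}{n+1}I_{n+1}$, and the eigenvalue computation on $\mathrm{span}\{\mathbf{1}\}$ and its orthogonal complement all check out and yield $c=n+2$. One small slip in the sharpness remark: the zero eigenvalue of $W_T-M_T$ sits on $\mathrm{span}\{\mathbf{1}\}$, so equality in the \emph{left} inequality occurs for $v\parallel\mathbf{1}$; likewise the zero eigenvalue of $(n+2)M_T-W_T$ sits on $\mathbf{1}^{\perp}$, so equality in the \emph{right} inequality occurs for $v\perp\mathbf{1}$. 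You have these two cases interchanged.

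As for comparison with the paper: the paper does not prove this proposition at all. It is stated with a direct citation to Wathen's tabulated eigenvalue bounds for the Galerkin mass matrix and used as a black box. Your element-wise derivation is exactly the standard way such bounds are obtained (and is essentially what underlies the cited reference), so you have supplied what the paper omits rather than taken a different route.
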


Denoting by $y_{c,h}:=\sum\limits_{i=1}^{N_h}y_c^i\phi_i(x)$ and $y_{d,h}:=\sum\limits_{i=1}^{N_h}y_d^i\phi_i(x)$ the $L^2$-projection of $y_c$ and $y_d$ onto $Y_h$, respectively, where $y_d^i=y_d(x^i)$, and identifying discrete functions with their coefficient vectors, we can rewrite the problem (\ref{eqn:discretized problems}) as a matrix-vector form:
\begin{equation}\label{matrix vector form}
\left\{\begin{aligned}
        &\min\limits_{(y,u,z)\in\mathbb{R}^{3N_h}}^{}~~ \frac{1}{2}\|y-y_d\|_{M_h}^{2}+\frac{\alpha}{2}\|u\|_{M_h}^{2}\\
        &~~~ \quad {\rm{s.t.}}\qquad\quad K_hy=M_h(u+y_c),\\
        &\ \qquad\quad\quad\quad\quad u\in[a,b]^{N_h}.
                          \end{aligned} \right.\tag{$\overline{\mathrm{P}}_{h}$}
\end{equation}

\section{An inexact heterogeneous ADMM algorithm and two-phase strategy for discretized problems}\label{sec:4} 
In this section, we will introduce an inexact ADMM algorithm and a two-phase strategy for discrete problems. Firstly, in order to establish relations parallel to (\ref{equ:saddle point problems2}) and (\ref{z-closed form solution}) for the discrete problem (\ref{matrix vector form}), we propose an inexact heterogeneous ADMM (ihADMM) algorithm with the aim of solving (\ref{matrix vector form}) to moderate accuracy. Furthermore, as we have mentioned, if more accurate solutions is necessarily required, combining our ihADMM and the PDAS method is a wise choice. Then a two-phase strategy is introduced. Specifically, utilizing the solution generated by our ihADMM, as a reasonably good initial point, PDAS is used as a postprocessor of our ihADMM. 
Similar to continuous case, in order to get a separable form for problem (\ref{matrix vector form}), we introduce an artificial variable $z$ and equivalently rewrite the problem (\ref{matrix vector form}) as:

\begin{equation}\label{equ:seprable matrix-vector form}
\left\{\begin{aligned}
        &\min\limits_{y,u,z}^{}~~ \frac{1}{2}\|y-y_d\|_{M_h}^{2}+\frac{\alpha}{2}\|u\|_{M_h}^{2}+\delta_{[a,b]}(z)\\
        &\ {\rm{s.t.}}\quad K_hy=M_hu,\\
        &\ \quad\quad \  u=z.
                          \end{aligned} \right.\tag{$\overline{\mathrm{DP}}_{h}$}
\end{equation}
Since the stiffness matrix $K_h$ and the mass matrix $M_h$ are symmetric positive definite matrices, then problem (\ref{equ:seprable matrix-vector form}) can be rewritten the following reduced form:
\begin{equation}\label{equ:reduced seprable matrix-vector form}
\left\{\begin{aligned}
        &\min\limits_{u,z}^{}~~ f(u)+g(z)\\
        &\ {\rm{s.t.}}\quad u=z.
                          \end{aligned} \right.\tag{$\overline{\mathrm{RDP}}_{h}$}
\end{equation}
with the reduced cost functions
\begin{eqnarray}
 f(u)&:=& \frac{1}{2}\|K_h^{-1}M_h(u+y_c)-y_d\|_{M_h}^{2}+\frac{\alpha}{2}\|u\|_{M_h}^{2},\label{equ:fu function} \\
g(z) &:=&  \delta_{[a,b]^{N_h}}\label{equ:gz function}.
\end{eqnarray}

To solve (\ref{equ:reduced seprable matrix-vector form}) by using ADMM-type algorithm, we first introduce the augmented Lagrangian function for (\ref{equ:reduced seprable matrix-vector form}). According to three possible choices of norms ($\mathbb{R}^{N_h}$ norm, $W_h$-weighted norm and $M_h$-weighted norm), for the augmented Lagrangian function, there are three versions as follows: for given $\sigma>0$,
\begin{eqnarray}
\mathcal{L}^1_\sigma(u,z;\lambda)&:=&f(u)+g(z)+\langle\lambda,u-z\rangle+\frac{\sigma}{2}\|u-z\|^{2}, \label{aguLarg1}\\
\mathcal{L}^2_\sigma(u,z;\lambda)&:=&f(u)+g(z)+\langle\lambda,M_h(u-z)\rangle+\frac{\sigma}{2}\|u-z\|_{W_h}^{2}, \label{aguLarg2}\\
\mathcal{L}^3_\sigma(u,z;\lambda)&:=&f(u)+g(z)+\langle\lambda,M_h(u-z)\rangle+\frac{\sigma}{2}\|u-z\|_{M_h}^{2}\label{aguLarg3}.
\end{eqnarray}
Then based on these three versions of augmented Lagrangian function, we give the following four versions of ADMM-type algorithm for (\ref{equ:reduced seprable matrix-vector form}) at $k$-th ineration: for given $\tau>0$ and $\sigma>0$,

\begin{equation}\label{inexact ADMM1}
  \hspace{-1in}\left\{\begin{aligned}
  &u^{k+1}=\arg\min_u\ f(u)+\langle\lambda^k,u-z^k\rangle+\sigma/2\|u-z^k\|^{2},\\
  &z^{k+1}=\arg\min_z\ g(z)+\langle\lambda^k,u^{k+1}-z\rangle+\sigma/2\|u^{k+1}-z\|^{2},\\
  &\lambda^{k+1}=\lambda^k+\tau\sigma(u^{k+1}-z^{k+1}).
  \end{aligned}\right.\tag{ADMM1}
\end{equation}
\begin{equation}\label{inexact ADMM2}
\left\{\begin{aligned}
  &u^{k+1}=\arg\min_u\ f(u)+\langle\lambda^k,W_h(u-z^k)\rangle+\sigma/2\|u-z^k\|_{W_h}^{2},\\
  &z^{k+1}=\arg\min_z\ g(z)+\langle\lambda^k,W_h(u^{k+1}-z)\rangle+\sigma/2\|u^{k+1}-z\|_{W_h}^{2},\\
  &\lambda^{k+1}=\lambda^k+\tau\sigma(u^{k+1}-z^{k+1}).
  \end{aligned}\right.\tag{ADMM2}
\end{equation}
\begin{equation}\label{inexact ADMM3}
\left\{\begin{aligned}
  &u^{k+1}=\arg\min_u\ f(u)+\langle\lambda^k,M_h(u-z^k)\rangle+\sigma/2\|u-z^k\|_{M_h}^{2},\\
  &z^{k+1}=\arg\min_z\ g(z)+\langle\lambda^k,M_h(u^{k+1}-z)\rangle+\sigma/2\|u^{k+1}-z\|_{M_h}^{2},\\
  &\lambda^{k+1}=\lambda^k+\tau\sigma(u^{k+1}-z^{k+1}).
  \end{aligned}\right.\tag{ADMM3}
\end{equation}
\begin{equation}\label{inexact ADMM4}
\left\{\begin{aligned}
  &u^{k+1}=\arg\min_u\ f(u)+\langle\lambda^k,M_h(u-z^k)\rangle+{\color{blue}\sigma/2\|u-z^k\|_{M_h}^{2}},\\
  &z^{k+1}=\arg\min_z\ g(z)+\langle\lambda^k,M_h(u^{k+1}-z)\rangle+{\color{red}\sigma/2\|u^{k+1}-z\|_{W_h}^{2}},\\
  &\lambda^{k+1}=\lambda^k+\tau\sigma(u^{k+1}-z^{k+1}).
  \end{aligned}\right.\tag{ADMM4}
\end{equation}
As one may know, (\ref{inexact ADMM1}) is actually the classical ADMM for (\ref{equ:reduced seprable matrix-vector form}), meanwhile, (\ref{inexact ADMM2}), (\ref{inexact ADMM3}) and (\ref{inexact ADMM4}) can be regarded as three different discretized forms of Algorithm \ref{algo1:ADMM for problems RP}.

Now, let us start to analyze and compare the advantages and disadvantages of the four algorithms.
Firstly, we focus on the $z$-subproblem in each algorithm. Since both identity matrix $I$ and lumped mass matrix $W_h$ are diagonal, it is clear that all the $z$-subproblems in (\ref{inexact ADMM1}), (\ref{inexact ADMM2}) and (\ref{inexact ADMM4}) have a closed form solution, except for the $z$-subproblem in (\ref{inexact ADMM3}).
Specifically, for $z$-subproblem in (\ref{inexact ADMM1}), the closed form solution could be given by:
\begin{equation}\label{equ:closed form ADMM1}
z^k={\rm\Pi}_{U_{ad}}\left(u^{k+1}+\frac{\lambda^k}{\sigma}\right).
\end{equation}
Similarly, for $z$-subproblems in (\ref{inexact ADMM2}) and (\ref{inexact ADMM4}), the closed form solution could be given by:
\begin{equation}\label{equ:closed form ADMM24}
z^{k+1}={\rm\Pi}_{U_{ad}}\left( u^{k+1}+\frac{W_h^{-1}M_h\lambda^k}{\sigma}\right)
\end{equation}
Fortunately, the expressions of (\ref{equ:closed form ADMM1}) and (\ref{equ:closed form ADMM24}) are similar to (\ref{z-closed form solution}). As we have mentioned that, from the view of both the actual numerical implementation and convergence analysis of the algorithm, establishing such parallel relation is important.

In addition, to ensure the $z$-subproblem in (\ref{inexact ADMM3}) have a closed form solution, we could add a proximal term $\frac{\sigma}{2}\|z-z^{k}\|_{\theta I- M_h}^{2}$ to $z$-subproblem, thus we get
\begin{equation}\label{newsubproblem z}
\begin{aligned}
  z^{k+1}=&\arg\min_z\ \mathcal{L}^3_\sigma(u^{k+1},z;\lambda^{k})+\frac{\sigma}{2}\|z-z^{k}\|_{\theta I- M_h}^{2}\\
   =&{\rm\Pi}_{U_{ad}}\left(\frac{1}{\sigma\theta}M_h(\sigma u^{k+1}+\lambda^k-\sigma z^k)+z^k\right).
\end{aligned}
\end{equation}
In this case, we call this method the linearized ADMM (LADMM), one can refer to \cite{linearizedADMM}. It is well known that the efficiency of the LADMM depends on how close is the chosen $\theta$ to the minimal and optimal value $\|M_h\|_2$. However, this quantity is not always easily computable.

Next, let us analyze the structure of $u$-subproblem in each algorithm. For (\ref{inexact ADMM1}), the first subproblem at $k$-th iteration 
is equivalent to solving the following linear system:
\begin{equation}\label{eqn:saddle point1}
\left[
  \begin{array}{ccc}
    M_h & \quad0 & \quad K_h \\
    0 & \quad{\alpha}M_h+\sigma I & \quad-M_h \\
    K_h & \quad-M_h & \quad0 \\
  \end{array}
\right]\left[
         \begin{array}{c}
           y^{k+1} \\
           u^{k+1} \\
           p^{k+1} \\
         \end{array}
       \right]=\left[
                 \begin{array}{c}
                   M_hy_d \\
                   \sigma z^k-\lambda^k \\
                   M_hy_c \\
                 \end{array}
               \right].
\end{equation}
Similarly, the $u$-subproblem in (\ref{inexact ADMM2}) can be converted into the following linear system:
\begin{equation}\label{eqn:saddle point2}
\left[
  \begin{array}{ccc}
    M_h & \quad0 & \quad K_h \\
    0 & \quad{\alpha}M_h+\sigma W_h & \quad-M_h \\
    K_h & \quad-M_h & \quad0 \\
  \end{array}
\right]\left[
         \begin{array}{c}
           y^{k+1} \\
           u^{k+1} \\
           p^{k+1} \\
         \end{array}
       \right]=\left[
                 \begin{array}{c}
                   M_hy_d \\
                   \sigma W_h(z^k-\lambda^k) \\
                   M_hy_c \\
                 \end{array}
               \right].
\end{equation}

However, the $u$-subproblem in both (\ref{inexact ADMM3}) and (\ref{inexact ADMM4}) can be rewritten as:
\begin{equation}\label{eqn:saddle point3}
\left[
  \begin{array}{ccc}
    M_h & \quad0 & \quad K_h \\
    0 & \quad(\alpha+\sigma) M_h  & \quad-M_h \\
    K_h & \quad-M_h & \quad0 \\
  \end{array}
\right]\left[
         \begin{array}{c}
           y^{k+1} \\
           u^{k+1} \\
           p^{k+1} \\
         \end{array}
       \right]=\left[
                 \begin{array}{c}
                   M_hy_d \\
                   M_h(\sigma z^k-\lambda^k) \\
                   M_hy_c \\
                 \end{array}
               \right].
\end{equation}
In (\ref{eqn:saddle point3}), since $p^{k+1}=(\alpha+\sigma)u^{k+1}-\sigma z^k+\lambda^k$, it is obvious that (\ref{eqn:saddle point3}) can be reduced into the following system by eliminating the variable $p$ without any computational cost:
\begin{equation}\label{eqn:saddle point4}
\left[
  \begin{array}{cc}
    \frac{1}{\alpha+\sigma}M_h & K_h \\
    -K_h & M_h
  \end{array}
\right]\left[
         \begin{array}{c}
           y^{k+1} \\
           u^{k+1}
         \end{array}
       \right]=\left[
                 \begin{array}{c}
                   \frac{1}{\alpha+\sigma}(K_h(\sigma z^k-\lambda^k)+M_hy_d)\\
                   -M_hy_c
                 \end{array}
               \right],
\end{equation}
while, reduced forms of (\ref{eqn:saddle point1}) and (\ref{eqn:saddle point2}):
\begin{equation}\label{eqn:saddle point5}
\left[
  \begin{array}{cc}
    M_h & \frac{\alpha}{2}K_h+\sigma K_hM_h^{-1} \\
    -K_h & M_h
  \end{array}
\right]\left[
         \begin{array}{c}
           y^{k+1} \\
           u^{k+1}
         \end{array}
       \right]=\left[
                 \begin{array}{c}
                   (K_hM_h^{-1}(\sigma z^k-\lambda^k)+M_hy_d)\\
                   -M_hy_c
                 \end{array}
               \right],
\end{equation}
and
\begin{equation}\label{eqn:saddle point6}
\left[
  \begin{array}{cc}
    M_h & \frac{\alpha}{2}K_h+\sigma K_hM_h^{-1}W_h \\
    -K_h & M_h
  \end{array}
\right]\left[
         \begin{array}{c}
           y^{k+1} \\
           u^{k+1}
         \end{array}
       \right]=\left[
                 \begin{array}{c}
                   (K_hM_h^{-1}W_h(\sigma z^k-\lambda^k)+M_hy_d)\\
                   -M_hy_c
                 \end{array}
               \right],
\end{equation}
both involve the inversion of $M_h$.
For above mentioned reasons, we prefer to use (\ref{inexact ADMM4}), which is called the heterogeneous ADMM (hADMM). However, in general, it is expensive and unnecessary to exactly compute the solution of saddle point system (\ref{eqn:saddle point4}) even if it is doable, especially at the early stage of the whole process. Based on the structure of (\ref{eqn:saddle point4}), it is a natural idea to use the iterative methods such as some Krylov-based methods. Hence, taking the inexactness of the solution of $u$-subproblem into account, a more practical inexact heterogeneous ADMM (ihADMM) algorithm is proposed.

Due to the inexactness of the proposed algorithm, we first introduce an error tolerance. Throughout this paper, let $\{\epsilon_k\}$ be a summable sequence of nonnegative numbers, and define
\begin{equation}\label{error sequence}
  C_1:=\sum\limits^{\infty}_{k=0}\epsilon_k\leq\infty, \quad C_2:=\sum\limits^{\infty}_{k=0}\epsilon_k^2\leq\infty.
\end{equation}
The details of our ihADMM algorithm is shown in Algorithm \ref{algo4:inexact heterogeneous ADMM for problem RHP} to solve (\ref{equ:reduced seprable matrix-vector form}).

\begin{algorithm}[H]
  \caption{inexact heterogeneous ADMM algorithm for (\ref{equ:reduced seprable matrix-vector form})}\label{algo4:inexact heterogeneous ADMM for problem RHP}
  \textbf{Input}: {$(z^0, u^0, \lambda^0)\in {\rm dom} (\delta_{[a,b]}(\cdot))\times \mathbb{R}^n \times \mathbb{R}^n $ and parameters $\sigma>0$, $\tau>0$. 
  Set $k=1$.}\\
  \textbf{Output}: {$ u^k, z^{k}, \lambda^k$}
\begin{description}
\item[Step 1] Find an minizer (inexact)
\begin{eqnarray*}
  u^{k+1}&=&\arg\min f(u)+(M_h\lambda^k,u-z^k)
         +\frac{\sigma}{2}\|u-z^k\|_{M_h}^{2}-\langle\delta^k, u\rangle,
\end{eqnarray*}
where the error vector ${\delta}^k$ satisfies $\|{\delta}^k\|_{2} \leq {\epsilon_k}$
\item[Step 2] Compute $z^k$ as follows:
       \begin{eqnarray*}
       z^{k+1}&=&\arg\min g(z)+(M_h\lambda^k,u^{k+1}-z)
         +\frac{\sigma}{2}\|u^{k+1}-z\|_{W_h}^{2}
       \end{eqnarray*}
  \item[Step 3] Compute
  \begin{eqnarray*}
    \lambda^{k+1} &=& \lambda^k+\tau\sigma(u^{k+1}-z^{k+1}).
  \end{eqnarray*}

  \item[Step 4] If a termination criterion is not met, set $k:=k+1$ and go to Step 1
\end{description}
\end{algorithm}

\subsection{Convergence results of ihADMM}
For the ihADMM, in this section we establish the global convergence and the iteration complexity results in non-ergodic sense for the sequence generated by Algorithm \ref{algo4:inexact heterogeneous ADMM for problem RHP}.

Before giving the proof of Theorem \ref{discrete convergence results}, we first provide a lemma, which is useful for analyzing the non-ergodic iteration complexity of ihADMM and introduced in \cite{SunToh1}.
\begin{lemma}\label{complexity lemma}
If a sequence $\{a_i\}\in \mathbb{R}$ satisfies the following conditions:
\begin{eqnarray*}
  &&a_i\geq0 \ \text{for any}\ i\geq0\quad and\quad \sum\limits_{i=0}^{\infty}a_i=\bar a<\infty.
\end{eqnarray*}
Then we have
\begin{equation*}
  \min\limits^{}_{i=1,...,k}\{a_i\} \leq \frac{\bar a}{k}, \quad \lim\limits^{}_{k\rightarrow\infty} \{k\cdot\min\limits^{}_{i=1,...,k}\{a_i\}\} =0.
\end{equation*}
\end{lemma}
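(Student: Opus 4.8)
Looking at Lemma \ref{complexity lemma}, this is a purely elementary real-analysis statement about a nonnegative summable sequence. Here is how I would prove it.

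\textbf{Plan of proof.} The statement splits into two claims. For the first inequality, let $m_k := \min_{1\le i\le k}\{a_i\}$. Since each of $a_1,\dots,a_k$ is at least $m_k$, summing gives $\sum_{i=1}^{k} a_i \ge k\, m_k$, and because all terms are nonnegative, $\sum_{i=1}^{k} a_i \le \sum_{i=0}^{\infty} a_i = \bar a$. Combining, $k\, m_k \le \bar a$, i.e. $m_k \le \bar a/k$, which is exactly $\min_{1\le i\le k}\{a_i\} \le \bar a/k$. This immediately forces $m_k \to 0$ as $k\to\infty$, but that weaker conclusion is not enough for the second claim — I need the sharper statement that $k\, m_k \to 0$, not merely that it stays bounded by $\bar a$.

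\textbf{The second claim.} This is the only part with any real content. The idea is to exploit the tail of the convergent series. Fix $\varepsilon>0$. Since $\sum_{i=0}^{\infty} a_i < \infty$, there is an index $N$ with $\sum_{i=N}^{\infty} a_i < \varepsilon/2$. Now for $k > 2N$ (say), look at the indices between $N$ and $k$: there are more than $k/2$ of them, and they all lie in $\{1,\dots,k\}$, so $m_k = \min_{1\le i\le k}\{a_i\} \le \min_{N\le i\le k}\{a_i\}$. Applying the averaging argument from the first part but only over the block $i=N,\dots,k$: the minimum over that block is at most the average over that block, hence
\begin{equation*}
 (k-N+1)\,\min_{N\le i\le k}\{a_i\} \le \sum_{i=N}^{k} a_i \le \sum_{i=N}^{\infty} a_i < \frac{\varepsilon}{2}.
\end{equation*}
Therefore $m_k \le \frac{\varepsilon}{2(k-N+1)}$, so $k\, m_k \le \frac{k\varepsilon}{2(k-N+1)}$, and since $\frac{k}{k-N+1}\to 1$ as $k\to\infty$, for all sufficiently large $k$ this is below $\varepsilon$. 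As $\varepsilon>0$ was arbitrary, $\lim_{k\to\infty} k\, m_k = 0$.

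\textbf{Anticipated obstacle.} There is essentially no hard obstacle here; the only subtlety is the bookkeeping in the second part — making sure the block $\{N,\dots,k\}$ is genuinely a subset of the index range over which the minimum is taken (so that $m_k$ is bounded by the block-minimum), and getting the factor $k/(k-N+1)$ to tend to $1$ so that the $\varepsilon/2$ can be absorbed. One should also be slightly careful about whether indexing starts at $0$ or $1$; the sum is over $i\ge 0$ while the minimum in the conclusion is over $i\ge 1$, but since $a_0\ge 0$ this only helps. I would present the first claim in one line and devote the bulk of the write-up to the tail estimate in the second claim.
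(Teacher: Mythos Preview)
Your proof is correct and is the standard argument for this elementary fact. Note, however, that the paper does not actually supply its own proof of Lemma~\ref{complexity lemma}: it merely states the result and attributes it to \cite{SunToh1}, so there is no ``paper's proof'' to compare against. Your write-up would serve perfectly well as the missing justification; the tail-splitting argument you give for the second claim is exactly the expected one.
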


For the convenience of the iteration complexity analysis in below, we define the function $R_h: (u,z,\lambda)\rightarrow [0,\infty)$ by:
\begin{equation}\label{discrete KKT function}
  R_h(u,z,\lambda)=\|M_h\lambda+\nabla f(u)\|^2+{\rm dist}^2(0, -M_h\lambda+\partial g(z))+\|u-z\|^2.
\end{equation}

By the definitions of $f(u)$ and $g(z)$ in (\ref{equ:fu function}) and (\ref{equ:gz function}), it is obvious that $f(u)$ and $g(z)$ both are closed, proper and convex functions. Since $M_h$ and $K_h$ are symmetric positive definite matrixes, we know the gradient operator $\nabla f$ is strongly monotone, and we have
 \begin{equation}\label{subdifferential strongly monotone}
   \langle\nabla f(u_1)-\nabla f(u_2), u_1-u_2\rangle=\|u_1-u_2\|^2_{\Sigma_{f}}
   \end{equation}
where ${\Sigma_{f}}=\alpha M_h+M_hK_h^{-1}M_hK_h^{-1}M_h$ is also a symmetric positive definite matrix. Moreover, the subdifferential operator $\partial g$ is a maximal monotone operators, e.g.,
 \begin{equation}\label{subdifferential monotone}
   \langle\varphi_1-\varphi_2, z_1-z_2\rangle\geq0, \quad \forall\ \varphi_1\in\partial g(z_1),\ \varphi_2\in \partial g(z_2).
 \end{equation}
For the subsequent convergence analysis, we denote
\begin{eqnarray}
  \label{equ:exact u}\bar{u}^{k+1}&=&\arg\min \hat{J}_h(u)+\langle M_h\lambda^k,u-z^k\rangle
         +\frac{\sigma}{2}\|u-z^k\|_{M_h}^{2}\\
  \label{equ:exact z} \bar z^{k+1}&=&\Pi_{[a,b]}(\bar u^{k+1}+\frac{W_h^{-1}M_h\lambda^k}{\sigma})
\end{eqnarray}
which is the exact solutions at the $(k+1)$th iteration in Algorithm \ref{algo4:inexact heterogeneous ADMM for problem RHP}. The following result shows the gap between $(u^{k+1}, z^{k+1})$ and $(\bar u^{k+1}, \bar z^{k+1})$ in terms of the given error tolerance $\|{\delta}^k\|_{2} \leq {\epsilon_k}$.

\begin{lemma}\label{gap between exact and inexact solution}
  Let $\{(u^{k+1}, z^{k+1})\}$ be the squence generated by Algorithm {\rm\ref{algo4:inexact heterogeneous ADMM for problem RHP}}, and $\{\bar u^{k+1}\}$, $\{\bar z^{k+1}\}$ be defined by {\rm(\ref{equ:exact u})} and {\rm(\ref{equ:exact z})}. Then for any $k\geq0$, we have
  \begin{eqnarray}
    \label{equ:error u}\|u^{k+1}-\bar u^{k+1}\| &=&\|(\sigma M_h+\Sigma_f)^{-1}\delta^k\|\leq \rho\epsilon_k,  \\
    \label{equ:error z}\|z^{k+1}-\bar z^{k+1}\| &\leq&\|u^{k+1}-\bar u^{k+1}\|\leq \rho\epsilon_k,
  \end{eqnarray}
where $\rho:=\|(\sigma M_h+\Sigma_f)^{-1}\|$.
\end{lemma}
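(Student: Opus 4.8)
The plan is to derive a stability estimate: compare the exact first-order optimality conditions of each subproblem evaluated at the inexact iterates and at the exact iterates, then exploit that $\nabla f$ is affine and that the box projection is non-expansive.

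First I would record the optimality condition for the exact $u$-update (\ref{equ:exact u}). Since $f$ is differentiable, $\bar u^{k+1}$ is characterized by
\[
\nabla f(\bar u^{k+1}) + M_h\lambda^k + \sigma M_h(\bar u^{k+1} - z^k) = 0 ,
\]
while the inexact iterate $u^{k+1}$ produced in Step 1 of Algorithm \ref{algo4:inexact heterogeneous ADMM for problem RHP} satisfies the same relation with the right-hand side perturbed by the error vector, because the perturbed objective there is $f(u) + \langle M_h\lambda^k, u - z^k\rangle + \frac{\sigma}{2}\|u - z^k\|_{M_h}^2 - \langle\delta^k,u\rangle$, so that
\[
\nabla f(u^{k+1}) + M_h\lambda^k + \sigma M_h(u^{k+1} - z^k) = \delta^k .
\]
Subtracting the two identities and using that $f$ is a convex quadratic, so $\nabla f(u_1) - \nabla f(u_2) = \Sigma_f(u_1 - u_2)$ with $\Sigma_f = \alpha M_h + M_h K_h^{-1} M_h K_h^{-1} M_h$ as in (\ref{subdifferential strongly monotone}), I get $(\sigma M_h + \Sigma_f)(u^{k+1} - \bar u^{k+1}) = \delta^k$. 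Because $\sigma M_h + \Sigma_f$ is symmetric positive definite and hence invertible, this yields $u^{k+1} - \bar u^{k+1} = (\sigma M_h + \Sigma_f)^{-1}\delta^k$, and taking norms with $\|\delta^k\| \le \epsilon_k$ gives $\|u^{k+1} - \bar u^{k+1}\| \le \|(\sigma M_h + \Sigma_f)^{-1}\|\,\epsilon_k = \rho\epsilon_k$, which is (\ref{equ:error u}).

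For the $z$-update I would first note that, since $W_h$ is diagonal and $g = \delta_{[a,b]^{N_h}}$ is separable across coordinates, the $W_h$-weighted proximal minimization in Step 2 reduces to a coordinate-wise clamp, so that
\[
z^{k+1} = \Pi_{[a,b]}\!\Big( u^{k+1} + \tfrac{1}{\sigma} W_h^{-1} M_h \lambda^k \Big),
\qquad
\bar z^{k+1} = \Pi_{[a,b]}\!\Big( \bar u^{k+1} + \tfrac{1}{\sigma} W_h^{-1} M_h \lambda^k \Big),
\]
the latter being exactly (\ref{equ:exact z}). These two points differ only in the shift $u^{k+1}$ versus $\bar u^{k+1}$; since the Euclidean projection onto the convex set $[a,b]^{N_h}$ is non-expansive, $\|z^{k+1} - \bar z^{k+1}\| \le \|u^{k+1} - \bar u^{k+1}\| \le \rho\epsilon_k$, which is (\ref{equ:error z}).

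No single step is a genuine obstacle here — the argument is linear algebra plus non-expansiveness. The only points that need care are: (i) confirming that inexactness enters the $u$-subproblem optimality system precisely as an additive $\delta^k$, which is how Step 1 is set up; and (ii) checking that the $W_h$-weighted prox of the box indicator coincides with the ordinary Euclidean projection, so that standard non-expansiveness in the unweighted norm is available for the $z$-estimate.
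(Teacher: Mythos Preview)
Your proposal is correct and follows essentially the same route as the paper: write the first-order optimality conditions for the exact and inexact $u$-subproblems, subtract to obtain $(\sigma M_h+\Sigma_f)(u^{k+1}-\bar u^{k+1})=\delta^k$, and then use the explicit projection formula for the $z$-update together with the nonexpansiveness of $\Pi_{[a,b]}$. The paper writes the gradient condition with $\Sigma_f$ and the constant term $-M_hK_h^{-1}M_hy_d$ spelled out, whereas you keep the compact form $\nabla f$ and invoke (\ref{subdifferential strongly monotone}); the arguments are otherwise identical.
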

\begin{proof}
  By the optimality conditions at point $(u^{k+1}, z^{k+1})$ and $(\bar u^{k+1}, \bar z^{k+1})$, we have
  \begin{eqnarray*}
    &&\Sigma_f u^{k+1}-M_hK_h^{-1}M_h y_d +M_h\lambda^k+\sigma M_h(u^{k+1}-z^k)-\delta^k=0, \\
    &&\Sigma_f \bar u^{k+1}-M_hK_h^{-1}M_h y_d +M_h\lambda^k+\sigma M_h(\bar u^{k+1}-z^k)=0,
  \end{eqnarray*}
thus
\begin{eqnarray*}
   u^{k+1}-\bar u^{k+1}&=& (\sigma M_h+\Sigma_f)^{-1}\delta^k
\end{eqnarray*}
which implies (\ref{equ:error u}). From (\ref{equ:closed form ADMM24}) and (\ref{equ:exact z}), and the fact that the projection operator $\Pi_{[a,b]}(\cdot)$ is nonexpansive, we get
\begin{equation*}
  \|z^{k+1}-\bar z^{k+1}\|=\|\Pi_{[a,b]}( u^{k+1}+\frac{W_h^{-1}M_h\lambda^k}{\sigma})-\Pi_{[a,b]}(\bar u^{k+1}+\frac{W_h^{-1}M_h\lambda^k}{\sigma})\|
  \leq\|u^{k+1}-\bar u^{k+1}\|.
\end{equation*}
The proof is completed.
\end{proof}

Next, for $k\geq0$, we define
\begin{eqnarray*}
  &r^k=u^k-z^k,\quad \bar r^k=\bar u^k-\bar z^k&  \\
  &\tilde{\lambda }^{k+1}=\lambda^k+\sigma r^{k+1},\quad \bar{\lambda }^{k+1}=\lambda^k+\tau\sigma \bar r^{k+1}, \quad \hat{\lambda }^{k+1}=\lambda^k+\sigma \bar r^{k+1},&
\end{eqnarray*}
and give two inequalities which is essential for establishing both the global convergence and the iteration complexity of our ihADMM. For the details of the proof, one can see in Appendix.

\begin{proposition}\label{descent proposition}
Let $\{(u^{k}, z^{k}, \lambda^{k})\}$ be the sequence generated by Algorithm {\rm\ref{algo4:inexact heterogeneous ADMM for problem RHP}} and $(u^{*}, z^{*}, \lambda^{*})$ be the KKT point of problem {\rm(\ref{equ:reduced seprable matrix-vector form})}. Then for $k\geq0$ we have
\begin{equation}\label{inequlaities property1}
  \begin{aligned}
  &\langle\delta^k,u^{k+1}-u^*\rangle +\frac{1}{2\tau\sigma}\|\lambda^k-\lambda^*\|^2_{M_h}+\frac{\sigma}{2}\|z^k-z^*\|^2_{M_h}
  -\frac{1}{2\tau\sigma}\|\lambda^{k+1}-\lambda^*\|^2_{M_h}-\frac{\sigma}{2}\|z^{k+1}-z^*\|^2_{M_h}\\
  &\geq\|u^{k+1}-u^*\|^2_{T}
  +\frac{\sigma}{2}\|z^{k+1}-z^*\|^2_{W_h-M_h} +\frac{\sigma}{2}\|r^{k+1}\|^2_{W_h-\tau M_h}
  +\frac{\sigma}{2}\|u^{k+1}-z^k\|^2_{M_h},
  \end{aligned}
\end{equation}
where $T:=\Sigma_f-\frac{\sigma}{2}(W_h-M_h)$.
\end{proposition}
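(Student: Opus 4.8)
The plan is to derive (\ref{inequlaities property1}) from the first‑order optimality conditions of the two subproblems of Algorithm~\ref{algo4:inexact heterogeneous ADMM for problem RHP} together with the KKT system of (\ref{equ:reduced seprable matrix-vector form}), feeding in exactly one genuine inequality---the monotonicity of $\partial g$---and handling everything else by exact algebraic identities.

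First I would record the optimality conditions. The inexact $u$‑step gives $\nabla f(u^{k+1})+M_h\lambda^k+\sigma M_h(u^{k+1}-z^k)=\delta^k$; the $z$‑step gives $M_h\lambda^k+\sigma W_h(u^{k+1}-z^{k+1})\in\partial g(z^{k+1})$; and the KKT point of (\ref{equ:reduced seprable matrix-vector form}) satisfies $\nabla f(u^*)+M_h\lambda^*=0$, $M_h\lambda^*\in\partial g(z^*)$ and $u^*=z^*$. The key reformulation introduces the auxiliary multiplier $\tilde\lambda^{k+1}=\lambda^k+\sigma r^{k+1}$: since $\lambda^k+\sigma(u^{k+1}-z^k)=\tilde\lambda^{k+1}+\sigma(z^{k+1}-z^k)$ and $M_h\lambda^k+\sigma W_h r^{k+1}=M_h\tilde\lambda^{k+1}+\sigma(W_h-M_h)r^{k+1}$, the two subproblem conditions become
\begin{equation*}
\nabla f(u^{k+1})=\delta^k-M_h\tilde\lambda^{k+1}-\sigma M_h(z^{k+1}-z^k),\qquad M_h\tilde\lambda^{k+1}+\sigma(W_h-M_h)r^{k+1}\in\partial g(z^{k+1}).
\end{equation*}
This is the step that funnels the heterogeneity of the two weights into the single extra term $\sigma(W_h-M_h)r^{k+1}$.

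Next I would apply the strong‑monotonicity identity (\ref{subdifferential strongly monotone}) for $\nabla f$ at $(u^{k+1},u^*)$ and the monotonicity (\ref{subdifferential monotone}) of $\partial g$ at $(z^{k+1},z^*)$; adding the two relations and using $u^*=z^*$ (so that $(u^{k+1}-u^*)-(z^{k+1}-z^*)=r^{k+1}$) produces
\begin{equation*}
\langle\delta^k,u^{k+1}-u^*\rangle\ \ge\ \|u^{k+1}-u^*\|^2_{\Sigma_f}+\langle M_h(\tilde\lambda^{k+1}-\lambda^*),r^{k+1}\rangle+\sigma\langle M_h(z^{k+1}-z^k),u^{k+1}-u^*\rangle-\sigma\langle(W_h-M_h)r^{k+1},z^{k+1}-z^*\rangle .
\end{equation*}
It then remains to massage the cross terms: (i) write $r^{k+1}=\tfrac1{\tau\sigma}(\lambda^{k+1}-\lambda^k)$ and apply the polarization identity $2\langle M_h a,b\rangle=\|a+b\|^2_{M_h}-\|a\|^2_{M_h}-\|b\|^2_{M_h}$ to rewrite $\langle M_h(\tilde\lambda^{k+1}-\lambda^*),r^{k+1}\rangle$ as $\tfrac1{2\tau\sigma}\bigl(\|\lambda^{k+1}-\lambda^*\|^2_{M_h}-\|\lambda^k-\lambda^*\|^2_{M_h}\bigr)+\sigma(1-\tfrac\tau2)\|r^{k+1}\|^2_{M_h}$; (ii) split $u^{k+1}-u^*=r^{k+1}+(z^{k+1}-z^*)$ inside $\sigma\langle M_h(z^{k+1}-z^k),u^{k+1}-u^*\rangle$, the $(z^{k+1}-z^*)$ part producing (again by polarization) the telescoping difference $\tfrac\sigma2\bigl(\|z^{k+1}-z^*\|^2_{M_h}-\|z^k-z^*\|^2_{M_h}\bigr)$ plus $\tfrac\sigma2\|z^{k+1}-z^k\|^2_{M_h}$. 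Collecting, the $M_h$‑weighted residual pieces recombine through $r^{k+1}+(z^{k+1}-z^k)=u^{k+1}-z^k$ into $\tfrac\sigma2\|r^{k+1}\|^2_{M_h}+\tfrac\sigma2\|u^{k+1}-z^k\|^2_{M_h}$, and after rearranging into the shape of (\ref{inequlaities property1}) the only thing left to check is that the leftover $(W_h-M_h)$‑weighted terms vanish; writing $a=u^{k+1}-u^*$, $b=z^{k+1}-z^*$ (so $r^{k+1}=a-b$), this reduces to the exact identity $-\tfrac12\|a-b\|^2_{W_h-M_h}+\tfrac12\|a\|^2_{W_h-M_h}-\tfrac12\|b\|^2_{W_h-M_h}=\langle(W_h-M_h)(a-b),b\rangle$. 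One then reads off $T=\Sigma_f-\tfrac\sigma2(W_h-M_h)$ and (\ref{inequlaities property1}) follows.

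The main obstacle is organizational rather than analytic: one must choose the auxiliary multiplier $\tilde\lambda^{k+1}$ so that the $W_h$/$M_h$ mismatch among the $z$‑step, the $u$‑step and the dual update collapses into a single inner product, and then track the many $\|\cdot\|^2_{M_h}$, $\|\cdot\|^2_{W_h}$ and $\|\cdot\|^2_{W_h-M_h}$ fragments carefully enough to see that everything except the $\partial g$‑monotonicity inequality cancels exactly. No restriction on $\sigma,\tau>0$ is needed for this proposition; the positivity facts $W_h\succeq M_h$ (Proposition~\ref{eqn:martix properties}), $W_h-\tau M_h\succeq0$ and $T\succeq0$ enter only afterwards, when (\ref{inequlaities property1}) is used to establish the global convergence and the $o(1/k)$ rate.
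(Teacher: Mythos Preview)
Your proposal is correct and follows essentially the same route as the paper: both combine the optimality conditions of the two subproblems with the KKT system, use the strong monotonicity of $\nabla f$ and the monotonicity of $\partial g$, and then reduce the resulting cross terms via polarization identities and the relation $\tilde\lambda^{k+1}=\lambda^k+\sigma r^{k+1}$. The only cosmetic difference is that you reformulate the subproblem conditions in terms of $\tilde\lambda^{k+1}$ \emph{before} invoking monotonicity and split $\sigma\langle M_h(z^{k+1}-z^k),u^{k+1}-u^*\rangle$ by writing $u^{k+1}-u^*=r^{k+1}+(z^{k+1}-z^*)$, whereas the paper first applies monotonicity and then handles this term in one shot via the four-point identity (\ref{identity2}); the bookkeeping is equivalent.
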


\begin{proposition}\label{descent proposition2}
Let $\{(u^{k}, z^{k}, \lambda^{k})\}$ be the sequence generated by Algorithm {\rm\ref{algo4:inexact heterogeneous ADMM for problem RHP}}, $(u^{*}, z^{*}, \lambda^{*})$ be the KKT point of the problem {\rm(\ref{equ:reduced seprable matrix-vector form})} and $\{\bar u^k\}$ and $\{\bar z^k\}$ be two sequences defined in {\rm(\ref{equ:exact u})} and {\rm(\ref{equ:exact z})}, respectively. Then for $k\geq0$ we have
\begin{equation}\label{inequlaities property}
  \begin{aligned}
 &\frac{1}{2\tau\sigma}\|\lambda^k-\lambda^*\|^2_{M_h}+\frac{\sigma}{2}\|z^k-z^*\|^2_{M_h}
  -\frac{1}{2\tau\sigma}\|\bar \lambda^{k+1}-\lambda^*\|^2_{M_h}-\frac{\sigma}{2}\|\bar z^{k+1}-z^*\|^2_{M_h}\\
  \geq\ &\|\bar u^{k+1}-u^*\|^2_{T}
  +\frac{\sigma}{2}\|\bar z^{k+1}-z^*\|^2_{W_h-M_h} +\frac{\sigma}{2}\|\bar r^{k+1}\|^2_{W_h-\tau M_h}+\frac{\sigma}{2}\|\bar u^{k+1}-z^k\|^2_{M_h},
  \end{aligned}
\end{equation}
where $T:=\Sigma_f-\frac{\sigma}{2}(W_h-M_h)$.
\end{proposition}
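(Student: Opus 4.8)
The plan is to derive both inequalities from the optimality conditions of the $u$- and $z$-subproblems, combine them with the definition of the dual update, and repeatedly complete squares. Since Proposition~\ref{descent proposition2} is exactly Proposition~\ref{descent proposition} with $\delta^k=0$ and $(u^{k+1},z^{k+1},\lambda^{k+1})$ replaced by $(\bar u^{k+1},\bar z^{k+1},\bar\lambda^{k+1})$, I would prove the general (inexact) statement in Proposition~\ref{descent proposition} and then read off Proposition~\ref{descent proposition2} as the special case; the two share verbatim the same algebra. So the real content is in establishing \eqref{inequlaities property1}.

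First I would write down the first-order optimality condition of the $u$-subproblem in the $M_h$-weighted augmented Lagrangian: $\nabla f(u^{k+1}) + M_h\lambda^k + \sigma M_h(u^{k+1}-z^k) = \delta^k$, and rewrite $\nabla f(u^{k+1}) = \Sigma_f u^{k+1} - M_h K_h^{-1}M_h y_d$ using the strong-monotonicity identity \eqref{subdifferential strongly monotone}. At the KKT point one has $\nabla f(u^*) + M_h\lambda^* = 0$; subtracting and pairing against $u^{k+1}-u^*$ produces a term $\|u^{k+1}-u^*\|^2_{\Sigma_f}$ (via \eqref{subdifferential strongly monotone}) together with cross terms in $\lambda^k-\lambda^*$, $z^k$, and $\delta^k$. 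Next I would write the optimality condition of the $z$-subproblem: $-M_h\lambda^k - \sigma W_h(u^{k+1}-z^{k+1}) \in \partial g(z^{k+1})$, i.e. $-M_h\tilde\lambda^{k+1} + \sigma(M_h - W_h)(u^{k+1}-z^{k+1}) \in \partial g(z^{k+1})$ after substituting $\tilde\lambda^{k+1}=\lambda^k+\sigma r^{k+1}$; since $-M_h\lambda^*\in\partial g(z^*)$ as well, monotonicity of $\partial g$ \eqref{subdifferential monotone} gives $\langle M_h(\tilde\lambda^{k+1}-\lambda^*) - \sigma(M_h-W_h)r^{k+1},\, z^{k+1}-z^*\rangle \geq 0$. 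I would also use the dual update $\lambda^{k+1}=\lambda^k+\tau\sigma r^{k+1}$ to express $\tilde\lambda^{k+1}$ in terms of $\lambda^{k+1},\lambda^k$, namely $\tilde\lambda^{k+1}-\lambda^k = \frac{1}{\tau}(\lambda^{k+1}-\lambda^k)$.

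Then I would add the two inequalities and perform the bookkeeping: group the $\lambda$ terms so that $\langle M_h(\lambda^{k+1}-\lambda^k), \cdot\rangle$ combines with $r^{k+1}=u^{k+1}-z^{k+1}$ to build the telescoping quantity $\frac{1}{2\tau\sigma}\big(\|\lambda^k-\lambda^*\|^2_{M_h}-\|\lambda^{k+1}-\lambda^*\|^2_{M_h}\big)$ plus a residual $-\frac{\sigma\tau}{2}\|r^{k+1}\|^2_{M_h}$ from the standard identity $\langle a-b, b-c\rangle = \tfrac12(\|a-c\|^2-\|a-b\|^2-\|b-c\|^2)$ applied in the $M_h$-norm. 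Similarly the $z$ terms: use $z^{k+1}-z^k = r^{k+1} - (u^{k+1}-z^k)$ together with the $W_h$-weighted term $\frac{\sigma}{2}\|u^{k+1}-z^{k+1}\|^2_{W_h}$ coming from the $z$-subproblem's quadratic, splitting it via $\|u^{k+1}-z^{k+1}\|^2 = \|u^{k+1}-z^k\|^2 + 2\langle u^{k+1}-z^k, z^k-z^{k+1}\rangle + \|z^k-z^{k+1}\|^2$, to generate the telescoping $\frac{\sigma}{2}\big(\|z^k-z^*\|^2_{M_h}-\|z^{k+1}-z^*\|^2_{M_h}\big)$, the cross-difference terms $\frac{\sigma}{2}\|z^{k+1}-z^*\|^2_{W_h-M_h}$ and $\frac{\sigma}{2}\|r^{k+1}\|^2_{W_h-\tau M_h}$ (the $W_h$ pieces from the subproblem, the $-\tau M_h$ piece absorbed from the dual step above), and the remaining $\frac{\sigma}{2}\|u^{k+1}-z^k\|^2_{M_h}$. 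The $\Sigma_f$ term from the $u$-side must be shifted to $T = \Sigma_f - \frac{\sigma}{2}(W_h-M_h)$ because completing the square on the $\|z^{k+1}-z^*\|^2_{W_h-M_h}$ contribution borrows exactly $\frac{\sigma}{2}(W_h-M_h)$ from the $u$-quadratic (here one uses $u^{k+1}-z^{k+1} = (u^{k+1}-u^*)-(z^{k+1}-z^*)$ and expands in the $W_h-M_h$ inner product). Finally the $\delta^k$ contribution stays as $\langle\delta^k, u^{k+1}-u^*\rangle$ on the left-hand side exactly as written.

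The main obstacle will be the careful matrix-weighted square-completion: keeping track of which weighted norm ($M_h$, $W_h$, or $W_h-M_h$) each quadratic lives in, and verifying that the cross terms cancel to leave precisely the four nonnegative-looking terms on the right of \eqref{inequlaities property1}. The subtlety is that $W_h-M_h$ is only positive semidefinite (indeed $\|z\|^2_{M_h}\le\|z\|^2_{W_h}$ by Proposition~\ref{eqn:martix properties}) and $W_h-\tau M_h$ need not be definite for general $\tau$, so the statement is an algebraic identity-plus-inequality rather than a chain of sign-definite bounds; the inequality sign enters only through the monotonicity of $\partial g$ and the strong monotonicity of $\nabla f$, and all other manipulations are equalities. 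I expect the cleanest route is to first prove it as an exact identity with a leftover term $\langle\nabla f(u^{k+1})-\nabla f(u^*), u^{k+1}-u^*\rangle - \|u^{k+1}-u^*\|^2_{\Sigma_f} = 0$ and $\langle\partial g\text{-terms}\rangle\ge 0$, then collect. Since the detailed computation is deferred to the Appendix in this paper, I would present only this skeleton here and relegate the bookkeeping there.
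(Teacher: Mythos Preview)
Your proposal is correct and follows essentially the same route as the paper's Appendix proof: derive two inequalities from the optimality conditions of the $u$- and $z$-subproblems via the strong monotonicity of $\nabla f$ and the monotonicity of $\partial g$, add them, and then use polarization identities of the form $\langle x,y\rangle_Q=\tfrac12(\|x\|_Q^2+\|y\|_Q^2-\|x-y\|_Q^2)$ to rewrite the three cross terms into the telescoping $M_h$-norm differences plus the residual $W_h-M_h$ and $W_h-\tau M_h$ pieces, with Proposition~\ref{descent proposition2} obtained by setting $\delta^k=0$ and replacing $(u^{k+1},z^{k+1},\lambda^{k+1})$ by $(\bar u^{k+1},\bar z^{k+1},\bar\lambda^{k+1})$. (Watch the signs in your $z$-subproblem inclusion: from the minimization one gets $M_h\lambda^k+\sigma W_h(u^{k+1}-z^{k+1})\in\partial g(z^{k+1})$ and $M_h\lambda^*\in\partial g(z^*)$, not the negatives.)
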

Then based on former results, we have the following convergence results.
\begin{theorem}\label{discrete convergence results}
Let $(y^*,u^*,z^*,p^*,\lambda^*)$ is the KKT point of {\rm(\ref{equ:seprable matrix-vector form})}, then the sequence $\{(u^{k},z^{k},\lambda^k)\}$ is generated by Algorithm {\rm\ref{algo4:inexact heterogeneous ADMM for problem RHP}} with the associated state $\{y^k\}$ and adjoint state $\{p^k\}$, then for any $\tau\in (0,1]$ and $\sigma\in (0, \frac{1}{4}\alpha]$, we have
\begin{eqnarray}
  \label{discrete iteration squence convergence1}&&\lim\limits_{k\rightarrow\infty}^{}\{\|u^{k}-u^*\|+\|z^{k}-z^*\|+\|\lambda^{k}-\lambda^*\| \}= 0\\
   \label{discrete iteration squence convergence2}&& \lim\limits_{k\rightarrow\infty}^{}\{\|y^{k}-y^*\|+\|p^{k}-p^*\| \}= 0
\end{eqnarray}
Moreover, there exists a constant $C$ only depending on the initial point ${(u^0,z^0,\lambda^0)}$ and the optimal solution ${(u^*,z^*,\lambda^*)}$ such that for $k\geq1$,
\begin{eqnarray}
  \label{discrete iteration complexity1}&&\min\limits^{}_{1\leq i\leq k} \{R_h(u^i,z^i,\lambda^i)\}\leq\frac{C}{k}, \quad
\lim\limits^{}_{k\rightarrow\infty}\left(k\times\min\limits^{}_{1\leq i\leq k} \{R_h(u^i,z^i,\lambda^i)\}\right) =0.
\end{eqnarray}
where $R_h(\cdot)$ is defined as in {\rm(\ref{discrete KKT function})}.

\begin{proof}
It is easy to see that $(u^*,z^*)$ is the unique optimal solution of discrete problem (\ref{equ:reduced seprable matrix-vector form}) if and only if there exists a Lagrangian multiplier $\lambda^*$ such that the following Karush-Kuhn-Tucker (KKT) conditions hold,
\begin{subequations}
\begin{eqnarray}
  \label{equ: exact variational inequalities1}&-M_h\lambda^*=\nabla f(u^*),\\
  \label{equ: exact variational inequalities2}&M_h\lambda^*\in \partial g(z^*),\\
  \label{equ: exact variational inequalities3}& u^*=z^*.
\end{eqnarray}
\end{subequations}

In the inexact heterogeneous ADMM iteration scheme, the optimality conditions for $(u^{k+1}, z^{k+1})$ are
\begin{subequations}
\begin{eqnarray}
  \label{equ: inexact variational inequalities1}&\delta^k-(M_h\lambda^k+\sigma M_h(u^{k+1}-z^k))=\nabla f(u^{k+1}),\\
  \label{equ: inexact variational inequalities2}&M_h\lambda^k+\sigma W_h(u^{k+1}-z^{k+1})\in \partial g(z^{k+1}).
\end{eqnarray}
\end{subequations}

Next, let us first prove the \textbf{global convergence of iteration sequences,} e.g., establish the proof of (\ref{discrete iteration squence convergence1}) and (\ref{discrete iteration squence convergence2}).
The first step is to show that $\{(u^k, z^k, \lambda^k)\}$ is bounded. We define the following sequence $\theta^k$ and $\bar\theta^k$ with:
\begin{eqnarray}
\theta^k &=& \left(\frac{1}{\sqrt{2\tau\sigma}}M_h^{\frac{1}{2}}(\lambda^k-\lambda^*), \sqrt{\frac{\sigma}{2}}M_h^{\frac{1}{2}}(z^k-z^*)\right),\label{iteration sequence1} \\
   \bar\theta^k &=& \left(\frac{1}{\sqrt{2\tau\sigma}}M_h^{\frac{1}{2}}(\bar\lambda^k-\lambda^*), \sqrt{\frac{\sigma}{2}}M_h^{\frac{1}{2}}(\bar z^k-z^*)\right)\label{iteration sequence2}.
\end{eqnarray}
According to Proposition \ref{eqn:martix properties}, for any $\tau\in (0,1]$ and $\sigma\in (0, \frac{1}{4}\alpha]$, we have
\begin{equation}\label{positive definite matrix}
  \begin{aligned}
  \Sigma_f-\frac{\sigma}{2}(W_h-M_h) \succ 0, \quad W_h-\tau M_h \succ 0 .
  \end{aligned}
\end{equation}
Then, by Proposition \ref{descent proposition2}, we get $\|\bar\theta^{k+1}\|^2\leq\|\theta^k\|^2$. As a result, we have:
\begin{equation}\label{descent theta}
  \begin{aligned}
  \|\theta^{k+1}\| &\leq \|\bar\theta^{k+1}\|+\|\bar\theta^{k+1}-\theta^{k+1}\|= \|\theta^{k}\|+\|\bar\theta^{k+1}-\theta^{k+1}\| .
  \end{aligned}
\end{equation}
Employing Lemma \ref{gap between exact and inexact solution}, we get
\begin{equation}\label{descent theta and bartheta}
  \begin{aligned}
  \|\bar\theta^{k+1}-\theta^{k+1}\|^2 &= \frac{1}{2\tau\sigma}\|\bar\lambda^{k+1}-\lambda^{k+1}\|^2_{M_h}+\frac{\sigma}{2}\|\bar z^{k+1}-z^{k+1}\|^2_{M_h} \\
  &= \frac{\tau\sigma}{2}\|\bar r^{k+1}-r^{k+1}\|^2_{M_h}+\frac{\sigma}{2}\|\bar z^{k+1}-z^{k+1}\|^2_{M_h}  \\
  &\leq \tau\sigma \|\bar u^{k+1}-u^{k+1}\|^2_{M_h}+(\tau\sigma+\frac{\sigma}{2})\|\bar z^{k+1}-z^{k+1}\|^2_{M_h}\\
  &\leq(2\tau+1/2)\sigma\|M_h\|\rho^2\epsilon_k^2\leq5/2\sigma\|M_h\|\rho^2\epsilon_k^2,
  \end{aligned}
\end{equation}
which implies $\|\bar\theta^{k+1}-\theta^{k+1}\|\leq\sqrt{5/2\sigma\|M_h\|}\rho\epsilon_k$. Hence, for any $k\geq0$, we have
\begin{equation}\label{boundedness theta and bartheta}
  \begin{aligned}
  \|\theta^{k+1}\| &\leq \|\theta^k\|+\sqrt{5/2\sigma\|M_h\|}\rho\epsilon_k\\
  &\leq  \|\theta^0\|+\sqrt{5/2\sigma\|M_h\|}\rho\sum\limits^{\infty}_{k=0}\epsilon_k=\|\theta^0\|+\sqrt{5/2\sigma\|M_h\|}\rho C_1\equiv\bar\rho.
  \end{aligned}
\end{equation}
From $\|\bar\theta^{k+1}\|\leq\|\theta^{k}\|$, for any $k\geq0$, we also have $\|\bar\theta^{k+1}\|\leq\bar\rho$. Therefore, the sequences $\{\theta^k\}$ and $\{\bar \theta^k\}$ are bounded. From the definition of $\{\theta^k\}$ and the fact that $M_h\succ0$, we can see that the sequences $\{\lambda^k\}$ and $\{z^k\}$ are bounded. Moreover, from updating technique of $\lambda^k$, we know $\{u^k\}$ is also bounded. Thus, due to the boundedness of the sequence $\{(u^{k}, z^{k}, \lambda^k)\}$, we know the sequence has a subsequence
$\{(u^{k_i}, z^{k_i}, \lambda^{k_i}))\}$ which converges to an accumulation point $(\bar u, \bar z, \bar\lambda)$. Next we should show that $(\bar u, \bar z, \bar\lambda)$ is a KKT point and equal to $(u^*, z^*, \lambda^*)$. 

Again employing Proposition \ref{descent proposition2}, we can derive
\begin{equation}\label{convergence inequlity}
  \begin{aligned}
&\sum\limits^{\infty}_{k=0}\left(\|\bar u^{k+1}-u^*\|^2_{T}
  +\frac{\sigma}{2}\|\bar z^{k+1}-z^*\|^2_{W_h-M_h} +\frac{\sigma}{2}\|\bar r^{k+1}\|^2_{W_h-\tau M_h}+\frac{\sigma}{2}\|\bar u^{k+1}-z^k\|^2_{M_h}\right)\\
  \leq &\sum\limits^{\infty}_{k=0}(\|\theta^k\|^2-\|\theta^{k+1}\|^2+\|\theta^{k+1}\|^2-\|\bar \theta^{k+1}\|^2) \\
  \leq&\|\theta^0\|^2+\sum\limits^{\infty}_{k=0}\left(\|\theta^{k+1}-\bar\theta^{k+1}\|(\|\theta^{k+1}\|+\|\bar\theta^{k+1}\|)\right) \\
  \leq& \|\theta^0\|^2+2\bar\rho\sqrt{5/2\sigma\|M_h\|}\rho C_1<\infty
  \end{aligned}
\end{equation}
which means
\begin{equation}\label{limit convergence1}
  \begin{aligned}
  \lim\limits^{}_{k\rightarrow\infty}\|\bar u^{k+1}-u^*\|_{T}=0,&\quad \lim\limits^{}_{k\rightarrow\infty}\|\bar z^{k+1}-z^*\|_{W_h-M_h}=0,&\\
  \lim\limits^{}_{k\rightarrow\infty} \|\bar r^{k+1}\|_{W_h-\tau M_h}=0,&\quad
  \lim\limits^{}_{k\rightarrow\infty}\|\bar u^{k+1}-z^k\|_{M_h}=0&.
    \end{aligned}
\end{equation}
Note that $T\succ0, W_h-M_h\succ0, W_h-\tau M_h\succ0$ and $M_h\succ0$, then we have
\begin{equation}\label{limit convergence2}
  \begin{aligned}
  \lim\limits^{}_{k\rightarrow\infty}\|\bar u^{k+1}-u^*\|=0,&\quad \lim\limits^{}_{k\rightarrow\infty}\|\bar z^{k+1}-z^*\|=0,&\\
  \lim\limits^{}_{k\rightarrow\infty} \|\bar r^{k+1}\|=0,&\quad
  \lim\limits^{}_{k\rightarrow\infty}\|\bar u^{k+1}-z^k\|=0.&
    \end{aligned}
\end{equation}
From the Lemma \ref{gap between exact and inexact solution}, we can get
\begin{equation}\label{limit convergence3}
  \begin{aligned}
  &\|u^{k+1}-u^*\|\leq\|\bar u^{k+1}-u^*\|+\|u^{k+1}-\bar u^{k+1}\|\leq\|\bar u^{k+1}-u^*\|+\rho\epsilon_k,\\
  &\|z^{k+1}-z^*\|\leq\|\bar z^{k+1}-z^*\|+\|z^{k+1}-\bar z^{k+1}\|\leq\|\bar z^{k+1}-z^*\|+\rho\epsilon_k.
    \end{aligned}
\end{equation}
From the fact that $\lim\limits^{}_{k\rightarrow\infty} \epsilon_k=0$ and (\ref{limit convergence2}), by taking the limit of both sides of (\ref{limit convergence3}), we have
\begin{equation}\label{limit convergence4}
  \begin{aligned}
  \lim\limits^{}_{k\rightarrow\infty}\| u^{k+1}-u^*\|=0,&\quad \lim\limits^{}_{k\rightarrow\infty}\| z^{k+1}-z^*\|=0,&\\
  \lim\limits^{}_{k\rightarrow\infty} \| r^{k+1}\|=0,&\quad
  \lim\limits^{}_{k\rightarrow\infty}\| u^{k+1}-z^k\|=0.&
    \end{aligned}
\end{equation}
Now taking limits for $k_i\rightarrow\infty$ on both sides of (\ref{equ: inexact variational inequalities1}), we have
\begin{equation*}
  \lim\limits^{}_{k_i\rightarrow\infty}(\delta^{k_i}-(M_h\lambda^{k_i}+\sigma M_h(u^{k_i+1}-z^{k_i})))=\nabla f(u^{k_i+1}),
\end{equation*}
which results in
\begin{equation*}
  -M_h\bar\lambda=\nabla f(u^*)
\end{equation*}
Then from (\ref{equ: exact variational inequalities1}), we know $\bar\lambda=\lambda^*$. At last, to complete the proof, we need to show that $\lambda^*$ is the limit of the sequence of $\{\lambda^k\}$. From
(\ref{boundedness theta and bartheta}), we have for any $k>k_i$,
\begin{equation*}
  \|\theta^{k+1}\|\leq\|\theta^{k_i}\|+\sqrt{5/2\sigma\|M_h\|}\rho\sum\limits^{k}_{j={k_i}}\epsilon_j.
\end{equation*}
Since $\lim\limits^{}_{k_i\rightarrow\infty}\|\theta^{k_i}\|=0$ and $\sum\limits_{k=0}^{\infty}\epsilon_k<\infty$, we have that $\lim\limits^{}_{k\rightarrow\infty}\|\theta^{k}\|=0$, which implies
\begin{equation*}
  \lim\limits^{}_{k\rightarrow\infty}\| \lambda^{k+1}-\lambda^*\|=0.
\end{equation*}
Hence, we have proved the convergence of the sequence $\{(u^{k+1}, z^{k+1}, \lambda^{k+1})\}$, which completes the proof of (\ref{discrete iteration squence convergence1}) in Theorem \ref{discrete convergence results}. For the proof of (\ref{discrete iteration squence convergence2}), it is easily to show by the definition of the sequence $\{(y^k, p^k)\}$, here we omit it.

At last, we establish the proof of (\ref{discrete iteration complexity1}), e.g., \textbf{the iteration complexity results in non-ergodic sendse for the sequence generated by the ihADMM.}

Firstly, by the optimality condition (\ref{equ: inexact variational inequalities1}) and (\ref{equ: inexact variational inequalities2}) for $(u^{k+1}, z^{k+1})$, we have
\begin{subequations}
\begin{eqnarray}
 \label{equ:discrete inexact variational inequalities3}&\delta^k+(\tau-1)\sigma M_h r^{k+1}-\sigma M_h(z^{k+1}-z^k)=M_h\lambda^{k+1}+\nabla f(u^{k+1}),\\
  \label{equ:discrete inexact variational inequalities4}&\sigma (W_h-\tau M_h)r^{k+1}\in -M_h\lambda^{k+1}+\partial g(z^{k+1}).
\end{eqnarray}
\end{subequations}
By the definition of $R_h$ and denoting $w^{k+1}:=(u^{k+1},z^{k+1},\lambda^{k+1})$, we derive
\begin{equation}\label{discrete KKT function for k+1}\small
\begin{aligned}
  R_h(w^{k+1})&=\|M_h\lambda^{k+1}+\nabla f(u^{k+1})\|^2+{\rm dist}^2(0, -M_h\lambda^{k+1}+\partial g(z^{k+1}))+\|u^{k+1}-z^{k+1}\|^2\\
  &\leq 2\|\delta^k\|^2+2(\tau-1)^2\sigma^2 \|M_h\|^2\| r^{k+1}\|^2+2\sigma^2 \|M_h\|\|z^{k+1}-z^k\|_{M_h}^2\\
  &\quad +\sigma^2\| (W_h-\tau M_h)\|^2\|r^{k+1}\|^2+\|r^{k+1}\|^2\\
  &\leq 2\|\delta^k\|^2+2(\tau-1)^2\sigma^2 \|M_h\|^2\| r^{k+1}\|^2+4\sigma^2 \|M_h\|^2\|u^{k+1}-z^{k+1}\|^2\\
  &\quad +4\sigma^2 \|M_h\|\|u^{k+1}-z^{k}\|_{M_h}^2+\sigma^2\| W_h-\tau M_h\|^2\|r^{k+1}\|^2+\|r^{k+1}\|^2\\
  &=2\|\delta^k\|^2+\eta\|r^{k+1}\|^2+2\sigma^2 \|M_h\|\|u^{k+1}-z^{k}\|_{M_h}^2,
\end{aligned}
\end{equation}
where
\begin{equation*}
  \eta:=2(\tau-1)^2\sigma^2 \|M_h\|^2+2\sigma^2 \|M_h\|^2+\sigma^2\| W_h-\tau M_h\|^2+1.
\end{equation*}
In order to get a upper bound for $R_h(w^{k+1})$, we will use (\ref{inequlaities property1}) in Proposition
\ref{descent proposition}. First, by the definition of $\theta^{k}$ and (\ref{boundedness theta and bartheta}), for any $k\geq0$ we can easily have
\begin{equation*}
  \|\lambda^k-\lambda^*\| \leq\bar\rho\sqrt{\frac{2\tau\sigma}{\|M_h^{-1}\|}} , \quad
  \|z^k-z^*\| \leq\bar\rho\sqrt{\frac{2}{\sigma\|M_h^{-1}\|}}.
\end{equation*}
Next, we should give a upper bound for $\langle\delta^k, u^{k+1}-u^*\rangle$:
\begin{equation}\label{inner product estimates}
\begin{aligned}
\langle\delta^k, u^{k+1}-u^*\rangle&\leq\|\delta^k\|(\|u^{k+1}-z^{k+1}\|+\|z^{k+1}-z^*\|)\\
              &=\|\delta^k\|\left(\frac{1}{\tau\sigma}\|\lambda^{k+1}-\lambda^{k}\|+\|z^{k+1}-z^*\|\right)\\
              &\leq\left(\left(1+\frac{2}{\sqrt{\tau}}\right)\frac{2\sqrt{2}\bar\rho}{\sqrt{\tau\sigma\|M_h^{-1}\|}}\right)\|\delta^k\|\equiv \bar\eta\|\delta^k\|.
\end{aligned}
\end{equation}
Then by the (\ref{inequlaities property1}) in Proposition \ref{descent proposition}, we have
\begin{equation}\label{summable1}\small
  \begin{aligned}
  \sum\limits^{\infty}_{k=0}\left(\frac{\sigma}{2}\|r^{k+1}\|^2_{W_h-\tau M_h}
  +\frac{\sigma}{2}\|u^{k+1}-z^k\|^2_{M_h}\right)&\leq \sum\limits^{\infty}_{k=0}(\theta^{k}-\theta^{k+1})
  +\sum\limits^{\infty}_{k=0}\langle\delta^k,u^{k+1}-u^*\rangle \\
  &\leq \theta^0+\bar\eta\sum\limits^{\infty}_{k=0}\|\delta^k\|
  \leq\theta^0+\bar\eta\sum\limits^{\infty}_{k=0}\epsilon^k=\theta^0+\bar\eta C_1.
  \end{aligned}
\end{equation}
Hence,
\begin{equation}\label{summable2}
  \begin{aligned}
\sum\limits^{\infty}_{k=0}\|r^{k+1}\|^2\leq \frac{2(\theta^0+\bar\eta C_1)}{\sigma\|(W_h-\tau M_h)^{-1}\|}, \quad
\sum\limits^{\infty}_{k=0}\|u^{k+1}-z^k\|^2_{M_h}\leq \frac{2(\theta^0+\bar\eta C_1)}{\sigma}.
  \end{aligned}
\end{equation}
By substituting (\ref{summable2}) to (\ref{discrete KKT function for k+1}), we have
\begin{equation}\label{summable3}
  \begin{aligned}
\sum\limits^{\infty}_{k=0}R_h(w^{k+1})&\leq2\sum\limits^{\infty}_{k=0}\|\delta\|^2
+\eta\sum\limits^{\infty}_{k=0}\|r^{k+1}\|^2+2\sigma^2 \|M_h\|\sum\limits^{\infty}_{k=0}\|u^{k+1}-z^{k}\|_{M_h}^2\\
&\leq2\sum\limits^{\infty}_{k=0}\epsilon_k^2+ \eta\frac{2(\theta^0+\bar\eta C_1)}{\sigma\|(W_h-\tau M_h)^{-1}\|}+2\sigma^2 \|M_h\|\frac{2(\theta^0+\bar\eta C_1)}{\sigma}\\
&\leq C:=2C_2+\eta\frac{2(\theta^0+\bar\eta C_1)}{\sigma\|(W_h-\tau M_h)^{-1}\|}+2\sigma^2 \|M_h\|\frac{2(\theta^0+\bar\eta C_1)}{\sigma}
  \end{aligned}
\end{equation}
Thus, by Lemma \ref{complexity lemma}, we know (\ref{discrete iteration complexity1}) holds. Therefore, combining the obtained global convergence results, we complete the whole proof of the Theorem \ref{discrete convergence results}.
\end{proof}
\end{theorem}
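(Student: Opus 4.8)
The plan is to decompose the argument into two self-contained parts --- global convergence of the iterates, followed by the non-ergodic $o(1/k)$ complexity bound --- both built on the two descent inequalities of Propositions~\ref{descent proposition} and~\ref{descent proposition2} and on the inexactness estimate of Lemma~\ref{gap between exact and inexact solution}. The preliminary step is to record the KKT system of the reduced discrete problem~(\ref{equ:reduced seprable matrix-vector form}), namely $-M_h\lambda^*=\nabla f(u^*)$, $M_h\lambda^*\in\partial g(z^*)$ and $u^*=z^*$, together with the optimality conditions that the inexact iterates $(u^{k+1},z^{k+1})$ satisfy; these are the objects into which all later limits will be fed.

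For the global convergence part I would proceed as follows. First, observe that the parameter restrictions $\tau\in(0,1]$ and $\sigma\in(0,\tfrac{\alpha}{4}]$, combined with the spectral equivalence $M_h\preceq W_h\preceq 4M_h$ furnished by Proposition~\ref{eqn:martix properties}, make the matrices $T:=\Sigma_f-\tfrac{\sigma}{2}(W_h-M_h)$ and $W_h-\tau M_h$ positive (semi)definite, so that the right-hand sides of Propositions~\ref{descent proposition} and~\ref{descent proposition2} are genuinely nonnegative. Introducing the merit quantity $\theta^k:=\big(\tfrac{1}{\sqrt{2\tau\sigma}}M_h^{1/2}(\lambda^k-\lambda^*),\ \sqrt{\sigma/2}\,M_h^{1/2}(z^k-z^*)\big)$ and its exact analogue $\bar\theta^k$, Proposition~\ref{descent proposition2} gives $\|\bar\theta^{k+1}\|\le\|\theta^k\|$, while Lemma~\ref{gap between exact and inexact solution} bounds $\|\theta^{k+1}-\bar\theta^{k+1}\|$ by a fixed multiple of $\epsilon_k$; since $\{\epsilon_k\}$ is summable, a telescoping estimate yields a uniform bound $\|\theta^k\|\le\bar\rho$, hence boundedness of $\{\lambda^k\}$, $\{z^k\}$ and, through the multiplier update, of $\{u^k\}$. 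Next, summing the inequality of Proposition~\ref{descent proposition2} over all $k$ keeps the series of right-hand sides finite, so each summand tends to zero; since $T$, $W_h-M_h$, $W_h-\tau M_h$ and $M_h$ are positive definite, this forces $\|\bar u^{k+1}-u^*\|$, $\|\bar z^{k+1}-z^*\|$, $\|\bar r^{k+1}\|$ and $\|\bar u^{k+1}-z^k\|$ to vanish, and Lemma~\ref{gap between exact and inexact solution} transfers these limits to the inexact sequences. Passing to the limit along a convergent subsequence in the inexact optimality condition for $u$ identifies the accumulation point as a KKT point, whence its multiplier component equals $\lambda^*$; a tail argument using $\|\theta^{k+1}\|\le\|\theta^{k_i}\|+\mathrm{const}\cdot\sum_{j\ge k_i}\epsilon_j$ promotes subsequential convergence of $\theta^k$ to full convergence, giving~(\ref{discrete iteration squence convergence1}). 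Finally~(\ref{discrete iteration squence convergence2}) follows because $y^k$ and $p^k$ depend continuously on $u^k$ and $z^k$ through the fixed invertible matrices $K_h$, $M_h$.

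For the complexity part I would use the two optimality conditions for $(u^{k+1},z^{k+1})$ to express $M_h\lambda^{k+1}+\nabla f(u^{k+1})$ and a particular element of $-M_h\lambda^{k+1}+\partial g(z^{k+1})$ as explicit linear combinations of $\delta^k$, $r^{k+1}$ and $z^{k+1}-z^k$, and thereby derive $R_h(u^{k+1},z^{k+1},\lambda^{k+1})\le 2\|\delta^k\|^2+\eta\|r^{k+1}\|^2+2\sigma^2\|M_h\|\,\|u^{k+1}-z^k\|_{M_h}^2$ for a constant $\eta$ depending only on $\tau$, $\sigma$, $\|M_h\|$ and $\|W_h\|$. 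Using the boundedness established above to estimate $\langle\delta^k,u^{k+1}-u^*\rangle\le\bar\eta\|\delta^k\|$, the inequality of Proposition~\ref{descent proposition} telescopes to $\sum_k\big(\tfrac{\sigma}{2}\|r^{k+1}\|_{W_h-\tau M_h}^2+\tfrac{\sigma}{2}\|u^{k+1}-z^k\|_{M_h}^2\big)\le\theta^0+\bar\eta C_1<\infty$, hence $\sum_k\|r^{k+1}\|^2<\infty$ and $\sum_k\|u^{k+1}-z^k\|_{M_h}^2<\infty$; combining with $\sum_k\|\delta^k\|^2\le C_2<\infty$ gives $\sum_k R_h(u^{k+1},z^{k+1},\lambda^{k+1})\le C<\infty$, and Lemma~\ref{complexity lemma} then yields both assertions in~(\ref{discrete iteration complexity1}).

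The hard part is not any of the bookkeeping above but the derivation of the two descent inequalities (Propositions~\ref{descent proposition} and~\ref{descent proposition2}), which I take as given here; the only genuinely delicate points in the present proof are verifying the positive-(semi)definiteness of $\Sigma_f-\tfrac{\sigma}{2}(W_h-M_h)$ and $W_h-\tau M_h$ on the prescribed parameter range --- precisely where the spectral sandwich $M_h\preceq W_h\preceq 4M_h$ of Proposition~\ref{eqn:martix properties} is indispensable --- and the upgrade from subsequential to full convergence of the multiplier, which rests on summability of $\{\epsilon_k\}$ rather than on any contraction property.
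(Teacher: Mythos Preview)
Your proposal is correct and follows essentially the same route as the paper's proof: the same merit quantity $\theta^k$, the same interplay between Proposition~\ref{descent proposition2} (for boundedness and convergence of the exact shadows $\bar u^{k+1},\bar z^{k+1}$), Lemma~\ref{gap between exact and inexact solution} (to transfer to the inexact iterates), and Proposition~\ref{descent proposition} (for the summability driving the $o(1/k)$ bound via Lemma~\ref{complexity lemma}). Even the specific upper bound $R_h\le 2\|\delta^k\|^2+\eta\|r^{k+1}\|^2+2\sigma^2\|M_h\|\,\|u^{k+1}-z^k\|_{M_h}^2$ and the tail argument for full convergence of $\lambda^k$ match the paper's treatment.
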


\subsection{Numerical computation of the $u$-subproblem of Algorithm \ref{algo4:inexact heterogeneous ADMM for problem RHP}}\label{subsection linear sysytem}
\subsubsection{Error analysis of the linear system {(\ref{eqn:saddle point4})}}
As we know, the linear system (\ref{eqn:saddle point4}) is a special case of the generalized saddle-point problem, thus some Krylov-based methods could be employed to inexactly solve the linear system. Let $(r^k_1, r^k_2)$ be the residual error vector, that means:
\begin{equation}\label{eqn:saddle point4 with error}
\left[
  \begin{array}{cc}
    \frac{1}{\alpha+\sigma}M_h & K_h \\
    -K_h & M_h
  \end{array}
\right]\left[
         \begin{array}{c}
           y^{k+1} \\
           u^{k+1}
         \end{array}
       \right]=\left[
                 \begin{array}{c}
                   \frac{1}{\alpha+\sigma}(K_h(\sigma z^k-\lambda^k)+M_hy_d)+r_1\\
                   -M_hy_c+r_2
                 \end{array}
               \right],
\end{equation}
and $\delta^k=(\alpha+\sigma)M_hK_h^{-1}r_1^k+M_hK_h^{-1}M_hK_h^{-1}r_2^k$, thus in the numerical implementation we require
\begin{equation}\label{error estimates1}
  \|r^k_1\|_{2}+\|r^k_2\|_{2}\leq\frac{\epsilon_k}{\sqrt{2}\|M_hK_h^{-1}\|_2\max\{\|M_hK_h^{-1}\|_{2},\alpha+\sigma\}}
\end{equation}
to guarantee the error vector $\|{\delta}^k\|_{2} \leq {\epsilon_k}$.

\subsubsection{An efficient precondition techniques for solving the linear systems}\label{precondition}
To solve (\ref{eqn:saddle point4}), in this paper, we use the generalized minimal residual {\rm (GMRES)} method. In order to speed up the convergence of the {\rm GMRES} method, the preconditioned variant of modified hermitian and skew-hermitian splitting {\rm(PMHSS)} preconditioner $\mathcal{P}$ is employed which is introduced in {\rm \cite{Bai}}:
\begin{equation}\label{precondition matrix1}
  \mathcal{P_{HSS}}=\frac{1}{\gamma}\left[
    \begin{array}{lcc}
I & \quad\sqrt{\gamma}I \\
      -\sqrt{\gamma}I & \quad\gamma I \\
    \end{array}
  \right]\left[
           \begin{array}{lcc}
             M_h+\sqrt{\gamma}K_h & 0 \\
             0 & M_h+\sqrt{\gamma}K_h \\
           \end{array}
         \right],
\end{equation}
where $\gamma=\alpha+\sigma$. Let $\mathcal{A}$ denote the coefficient matrix of linear system (\ref{eqn:saddle point4}).
%

In actual implementations, the action of the preconditioning matrix, when used to
precondition the {\rm GMRES} methods, is realized through solving a sequence of generalized residual equations of the form $\mathcal{P_{HSS}}v=r$, where $r=(r_a; r_b)\in \mathbb{R}^{2N_h}$, with $r_a, r_b\in \mathbb{R}^{N_h}$. By making using of the structure of the matrix $\mathcal{P_{HSS}}$, we obtain the following procedure for computing the vector $v$:
  \begin{itemize}
  \item []Step 1 compute $\hat{r}_a$ and $\hat{r}_b$
  \begin{eqnarray*}
   \hat{r}_a&=&\frac{\gamma}{2}r_a-\frac{\sqrt{\gamma}}{2}r_b, \\
   \hat{r}_b&=&\frac{\sqrt{\gamma}}{2}r_a+\frac{1}{2}r_b.
  \end{eqnarray*}
  \item []Step 2 compute $v_a$ and $v_b$ by solving the following linear systerms
    \begin{eqnarray*}
   (M_h+\sqrt{\gamma}K_h)v_a&=&\hat {r}_a,\\
   (M_h+\sqrt{\gamma}K_h)v_b&=&\hat{r}_b.
  \end{eqnarray*}

\end{itemize}
In our numerical experiments, the approximation $\widehat{G}$ corresponding to the matrix $G:=M_h+\sqrt{\gamma}K_h$ is implemented by 20 steps of Chebyshev semi-iteration when the parameter $\gamma$ is small, since in this case the coefficient matrix $G$ is dominated by the mass matrix and 20 steps of Chebyshev semi-iteration is an appropriate approximation for the action of $G$'s inverse. For more details on the Chebyshev semi-iteration method we refer to {\rm\cite{ReDoWa,chebysevsemiiteration}}. Meanwhile, for the large values of $\gamma$, the stiffness matrix $K_h$ makes a significant contribution. Hence, a fixed number of Chebyshev semi-iteration is no longer sufficient to approximate the action of $G^{-1}$. In this case, the way to avoid this difficulty is to approximate the action of $G^{-1}$ with two AMG V-cycles, which obtained by the \textbf{amg} operator in the iFEM software package\footnote{\noindent \textrm{For more details about the iFEM software package, we refer to the website \url{http://www.math.uci.edu/~chenlong/programming.html} }}.

It is obvious that the {\rm PMHSS} preconditioner requires the (approximate) solution of two linear systems with coefficient matrix $M_h+\sqrt{\gamma}K_h$ at each iteration. However, if the classical ADMM is employed, we need to solve (\ref{eqn:saddle point1}) by {\rm MINRES} with block diagonal preconditioner $\mathcal{P}_{BD}$:
\begin{equation}\label{precondition matrix2}
 \mathcal{P}_{BD}= \left[
     \begin{array}{ccc}
       M_h & 0  & 0 \\
        0  & \alpha M_h+\sigma I & 0  \\
        0  & 0 & K_hM_h^{-1}K_h  \\
     \end{array}
   \right]
\end{equation}
which requires at each iteration the (approximate) solution of two linear systems with coefficient matrix $K_h$ in addition to two linear systems involving the mass matrix $M_h$ and the matrix $\alpha M_h+\sigma I$.
\subsubsection{Terminal condition}\label{terminal condition}
Let $\epsilon$ be a given accuracy tolerance. Thus we terminate our ihADMM method when $\eta<\epsilon$,
where
  $\eta=\max{\{\eta_1,\eta_2,\eta_3,\eta_4,\eta_5\}}$,
in which
\begin{equation*}
  \begin{aligned}
     & \eta_1=\frac{\|K_hy- M_hu-M_h y_c\|}{1+\|M_hy_c\|},\quad \eta_2=\frac{\|M_h(u-z)\|}{1+\|u\|},\quad
     \eta_3=\frac{\|M_h(y- y_d)+ K_hp\|}{1+\|M_hy_d\|},\\
 &\eta_4=\frac{\|\alpha M_hu -M_hp + M_h\lambda\|}{1+\|u\|},\quad
     \eta_5=\frac{\|z-{\rm\Pi}_{[a,b]}\left(z+M_h\lambda\right)\|}{1+\|z\|}.
  \end{aligned}
\end{equation*}

\subsection{A two-phase strategy for discrete problems}\label{terminal condition for PDAS}
In this section, we introduce the primal-dual active set (PDAS) method as a Phase-II algorithm to solve problem (\ref{matrix vector form}). Let $(\bar{y}^*,\bar{u}^*)$ be the optimal solution of (\ref{matrix vector form}) which can be characterized by the following optimality system that is regarded as a discretized type of (\ref{eqn:KKT2}):

\begin{equation}\label{eqn:discretizedKKT}
 G(\bar y^*,\bar u^*,\bar p^*,\bar \mu^*)=\left( \begin{aligned}
      & \qquad \qquad \qquad K_h\bar y^*-M_h\bar u^*\\
       &\qquad \qquad \qquad K_h\bar p^*+M_h(\bar y^*-y_d)\\
       &\qquad \qquad\qquad \alpha M_h\bar u^*-M_h\bar p^*+\bar \mu^*\\
       & \bar \mu^*-\max(0, \bar \mu^*+c(\bar u^*-b))-\min(0, \bar\mu^*+c(\bar u^*-a))
                          \end{aligned} \right)=0
 \end{equation}
for any $c>0$.

Thus, the full numerical scheme of the primal-dual active set method for solving the nonsmooth equation (\ref{eqn:discretizedKKT}) is shown in Algorithm \ref{algo3:Primal-Dual Active Set (PDAS) method1}.

 \begin{algorithm}[H]
  \caption{\ Primal-Dual Active Set (PDAS) method for (\ref{matrix vector form})}
  \label{algo3:Primal-Dual Active Set (PDAS) method1}
Initialization: Choose $y^0$, $u^0$, $p^0$ and $\mu^0$. Set $k=0$ and $c>0$.
\begin{description}
\item[Step 1] Determine the following subsets (Active and Inactive sets)
\begin{eqnarray*}
&&\mathcal{A}^{k+1}_{a} = \{i\in \{1,2,...,N_h\}: (\mu^k+c(u^k-a))_i<0\}, \\
&&\mathcal{A}^{k+1}_{b}= \{i\in \{1,2,...,N_h\}: (\mu^k+c(u^k-b))_i>0\}, \\
&& \mathcal{I}^{k+1}= \{1,2,...,N_h\}\backslash (\mathcal{A}^{k+1}_a \cup \mathcal{A}^{k+1}_a).
\end{eqnarray*}
\item[Step 2] solve the following system
\begin{eqnarray*}
\left\{\begin{aligned}
        &K_hy^{k+1} - M_hu^{k+1}=0, \\
  &K_hp^{k+1} + M_hy^{k+1}=M_hy_d,\\
  &\alpha M_hu^{k+1}-M_hp^{k+1}+\mu^{k+1} = 0,
 \end{aligned}\right.
\end{eqnarray*}
 where
\begin{eqnarray*}
 &u^{k+1}=\left\{\begin{aligned}
  a\quad {\rm \ on}~ \mathcal{A}^{k+1}_{a}\\
  b\quad {\rm \ on}~ \mathcal{A}^{k+1}_{b}
   \end{aligned}\right. \qquad and\quad   \mu^{k+1}=0\quad {\rm on}~ \mathcal{I}^{k+1}
 \end{eqnarray*}
  \item[Step 3] If a termination criterion is not met, set $k:=k+1$ and go to Step 1
\end{description}
\end{algorithm}

\begin{remark}
To numerically solve the linear system in step {\rm2} of Algorithm {\rm \ref{algo3:Primal-Dual Active Set (PDAS) method1}}, we partition the control $u$ according to the dimension of the sets $\mathcal{A}^{k+1}_{a}$, $\mathcal{A}^{k+1}_{b}$ and $\mathcal{I}^{k+1}$, and then attain a reduced system:
\begin{equation}\label{equ:PDASsaddle}\small
  \left[
    \begin{array}{ccc}
      M_h & 0 & K_h \\
      0 & \alpha M_h^{\mathcal{I}^{k+1},\mathcal{I}^{k+1}} & -M_h^{\mathcal{I}^{k+1},:} \\
      K_h & -M_h^{:,\mathcal{I}^{k+1}}& 0 \\
    \end{array}
  \right]\left[
           \begin{array}{c}
             y^{k+1} \\
             u^{\mathcal{I}^{k+1}} \\
             p^{k+1} \\
           \end{array}
         \right]=\left[
                   \begin{array}{c}
                     M_hy_d \\
                     -\alpha (M_h^{\mathcal{I}^{k+1},\mathcal{A}_a^{k+1}}a+ M_h^{\mathcal{I}^{k+1},\mathcal{A}_b^{k+1}}b) \\
                     M_h^{:,\mathcal{A}_a^{k+1}}a+M_h^{:,\mathcal{A}_b^{k+1}}b \\
                   \end{array}
                 \right]
\end{equation}
Once this system is solved, we could update the Lagrange multiplier $\mu$ associated with the sets $\mathcal{A}_a^{k+1}$ and $\mathcal{A}_b^{k+1}$ by
\begin{eqnarray}\label{equ:update multiplier}
  \mu^{\mathcal{A}_b^{k+1}} &=& M_h^{\mathcal{A}_b^{k+1},:}p^{k+1}-\alpha(M_h^{\mathcal{A}_b^{k+1},\mathcal{I}^{k+1}}u^{\mathcal{I}^{k+1}}
  +M_h^{\mathcal{A}_b^{k+1},\mathcal{A}_b^{k+1}}b+M_h^{\mathcal{A}_b^{k+1},\mathcal{A}_a^{k+1}}a) \\
  \mu^{\mathcal{A}_a^{k+1}} &=& M_h^{\mathcal{A}_a^{k+1},:}p^{k+1}-\alpha(M_h^{\mathcal{A}_a^{k+1},\mathcal{I}^{k+1}}u^{\mathcal{I}^{k+1}}
  +M_h^{\mathcal{A}_a^{k+1},\mathcal{A}_b^{k+1}}b+M_h^{\mathcal{A}_a^{k+1},\mathcal{A}_a^{k+1}}a)
\end{eqnarray}
Evidently, the linear system {\rm(\ref{equ:PDASsaddle})} represents a saddle point system with a $3\times3$ block structure which can be solved by employing some Krylov subspace methods with a good preconditioner. For a general survey of how to precondition saddle point problems we refer to {\rm\cite{saddlepoint}}. In particular, Rees and Stoll in {\rm\cite{ReeSto}} showed that the following block triangular preconditioners can be employed for the solution of saddle point system {\rm(\ref{equ:PDASsaddle})} with Bramble-Pasciak CG method {\rm\cite{BBcg}:}
\begin{equation*}
 \mathcal{P}_{BT}= \left[
     \begin{array}{ccc}
       A_0 & 0  & 0 \\
        0  & A_1 & 0  \\
        -K_h  &  M_h^{:,\mathcal{I}^{k+1}} & -S_0\\
     \end{array}
   \right],
\end{equation*}
In actual numerical implementations, we chose the preconditioners $A_0$ and $A_1$ to be {\rm 20} steps of Chebyshev semi-iteration to represent approximation to $M_h$ and $\alpha M_h^{\mathcal{I}^{k+1},\mathcal{I}^{k+1}}$, respectively; see {\rm\cite{ReDoWa, precondition}}. And the block $S_0:=\widehat{K_h}M_h^{-1}\widehat{K_h}$ represent a good approximation to $K_hM_h^{-1}K_h$, in which the approximation $\widehat{K_h}$ to $K_h$ is set to be two AMG V-cycles obtained by the \textbf{amg} operator in the iFEM software package.

In addition, let $\epsilon$ be a given accuracy tolerance. Thus we terminate our Phase-II algorithm (PDAS method) when $\eta<\epsilon$,
where $\eta=\max{\{\eta_1,\eta_2,\eta_3\}}$ and
\begin{equation*}
  \begin{aligned}
     & \eta_1=\frac{\|K_hy- M_hu-M_h y_c\|}{1+\|M_hy_c\|},\quad\quad\eta_2=\frac{\|M_h(y- y_d)+ K_hp\|}{1+\|M_hy_d\|}, \\
     &\eta_3=\frac{\|u-{\rm\Pi}_{[a,b]}\left((I-\alpha M_h)u+M_hp\right)\|}{1+\|u\|}.
  \end{aligned}
\end{equation*}
\end{remark}
\subsection{\textbf {Algorithms for comparison}}\label{subsection5.4}
In this section, in order to show the high efficiency of our ihADMM and the two-phase strategy, we introduce the details of a globalized version of PDAS as a comparison to solve (\ref{matrix vector form}). An important issue for the successful application of the PDAS scheme, is the use of a robust line-search method for globalization purposes. In our numerical implementation, the classical Armijo line search schemes is used. Then a globalized version of PDAS with Armijo line search is given.

In addition, as we have mentioned in Section \ref{sec:4}, instead of our ihADMM method and PDAS method, one can also apply the classical ADMM method and the linearized ADMM (LADMM) to solve problem (\ref{matrix vector form}) for the sake of numerical comparison.
Thus, in numerical implementation, we will also show the numerical results of the classical ADMM and the LADMM.

\section{Numerical Result}\label{sec:5}
In this section, we will use the following example to evaluate the numerical behaviour of our two-phase framework algorithm for the problem (\ref{matrix vector form}) and verify the theoretical error estimates given in Section \ref{sec:3}.

\subsection{Algorithmic Details}\label{subsec:5.1}
We begin by describing the algorithmic details which are common to all examples.

\textbf{Discretization.} As show in Section \ref{sec:3}, the discretization was carried out using piece-wise linear and continuous finite elements. For the case with domain $\Omega={B_{1}{(0)}}$, the unit circle in $\subseteq\mathbb{R}^2$, the polyhedral approximation is used. The assembly of mass and the stiffness matrices, as well as the lump mass matrix was left to the iFEM software package.

To present the finite element error estimates results, it is convenient to introduce the experimental order of convergence (EOC), which for some positive error functional $E(h)$ with $h> 0$ is defined as follows: let
\begin{equation}\label{EOC}
  \mathrm{EOC}:=\frac{\log {E(h_1)}-\log {E(h_2)}}{\log{h_1}-\log {h_2}}.
\end{equation}
where $h_1$ and $h_2$ denote two consecutive mesh sizes. It follows from this definition that if $E(h)=\mathcal{O}(h^{\gamma})$ then $\mathrm{EOC}\approx\gamma$.
The error functional $E(\cdot)$ investigated in the present section is given by
\begin{equation}\label{error norm}
  E_2(h):=\|u-u_h\|_{L^2{(\Omega)}}.
\end{equation}

\textbf{Initialization.} For all numerical examples, we choose $u=0$ as initialization $u^0$ for all algorithms.

\textbf{Parameter Setting.} For the classical ADMM, the LADMM and our ihADMM, the penalty parameter $\sigma$ was chosen as $\sigma=0.1 \alpha$. About the step-length $\tau$, we choose $\tau=1.618$ for the classical ADMM and LADMM, and $\tau=1$ for our ihADMM. For the PDAS method, the parameter in the active set strategy was chosen as $c=1$. For the LADMM method, we estimate an approximation for the Lipschitz constant $L$ with a backtracking method.

\textbf{Terminal Condition.} In numerical experiments, we measure the accuracy of an approximate optimal solution by using the corresponding K-K-T residual error for each algorithm. For the purpose of showing the efficiency of ihADMM, we report the numerical results obtained by running the classical ADMM method and the LADMM to compare with the results obtained by ihADMM. In this case, we terminate all the algorithms when $\eta<10^{-6}$ with the maximum number of iterations set at 500. Additionally, we also employ our two-phase strategy to obtain more accurate solution. As a comparison, a globalized version of the PDAS algorithm are also shown. In this case, we terminate the our ihADMM when $\eta<10^{-3}$ to warm-start the PDAS algorithm which is terminated when $\eta<10^{-11}$. Similarly, we terminate the PDAS algorithm with Armijo line search when $\eta<10^{-11}$.

\textbf{Computational environment.}
All our computational results are obtained by MATLAB Version 8.5(R2015a) running on a computer with
64-bit Windows 7.0 operation system, Intel(R) Core(TM) i7-5500U CPU (2.40GHz) and 8GB of memory.

\subsection{Examples}\label{subsec:5.2}
\begin{example}\label{example:1}\cite[Example 3.3]{HiPiUl}
  \begin{equation*}
     \left\{ \begin{aligned}
        &\min \limits_{(y,u)\in H^1_0(\Omega)\times L^2(\Omega)}^{}\ \ J(y,u)=\frac{1}{2}\|y-y_d\|_{L^2(\Omega)}^{2}+\frac{\alpha}{2}\|u\|_{L^2(\Omega)}^{2} \\
        &\qquad\quad{\rm s.t.}\qquad \quad-\Delta y=u,\quad \mathrm{in}\  \Omega\\
         &\qquad \qquad \quad\qquad \qquad ~y=0,\quad  \mathrm{on}\ \partial\Omega\\
         &\qquad \qquad \qquad\qquad  u\in U_{ad}=\{v(x)|a\leq v(x)\leq b, {\rm a.e }\  \mathrm{on}\ \Omega \},
                          \end{aligned} \right.
 \end{equation*}
 Here we consider the problem with $\Omega={B_{1}{(0)}}\subseteq\mathbb{R}^2$ denoting the unit circle. Furthermore, we set the desired state $y_d=(1-(x_1^2+x_2^2))x_1$, the parameters $\alpha=0.1$, $a=-0.2$ and $b=0.2$. In addition, the exact solutions of the problem is unknown in advance. Instead we use the numerical solutions computed on a grid with $h^*=2^{-10}$ as reference solutions.

 An an example, the discretized optimal control $u_h$ with $h=2^{-6}$ is displayed in Figure \ref{fig:control on h=$2^{-6}$}. In Table \ref{tab:1}, we present the error of the control $u$ w.r.t the $L^2$ norm with respect to the solution on the finest grid ($h^*=2^{-10}$) and the experimental order of convergence (EOC) for control, which both confirm the error estimate result as shown in Theorem \ref{theorem:error1}.

Numerical results for the accuracy of solution, number of iterations and cpu time obtained by our ihADMM, classical ADMM and LADMM methods are shown in Table \ref{tab:1}. As a result from Table \ref{tab:1}, we can see that our proposed ihADMM is a highly efficient for problem (\ref{matrix vector form}) in obtaining an approximate solution with medium accuracy, which compared to the classical ADMM and the LADMM in terms of in CPU time, especially when the discretization is in a fine level.
Furthermore, it should be specially mentioned that the numerical results in terms of iterations illustrate the mesh-independent performance of the ihADMM and the LADMM. However, iterations of the classical ADMM will increase with the refinement of the discretization.

In addition, in order to obtain more accurate solution, we employ our two-phase strategy. The numerical results are shown in Table \ref{tab:2}. As a comparison, numerical results obtained by the PDAS with line search are also shown in Table \ref{tab:2} to show the power and the importance of our two-phase framework. It can be observed that our two-phase strategy is faster and more efficient than the PDAS with line search in terms of the iteration numbers and CPU time.


\begin{figure}[ht]
\centering
\includegraphics[width=0.66\textwidth]{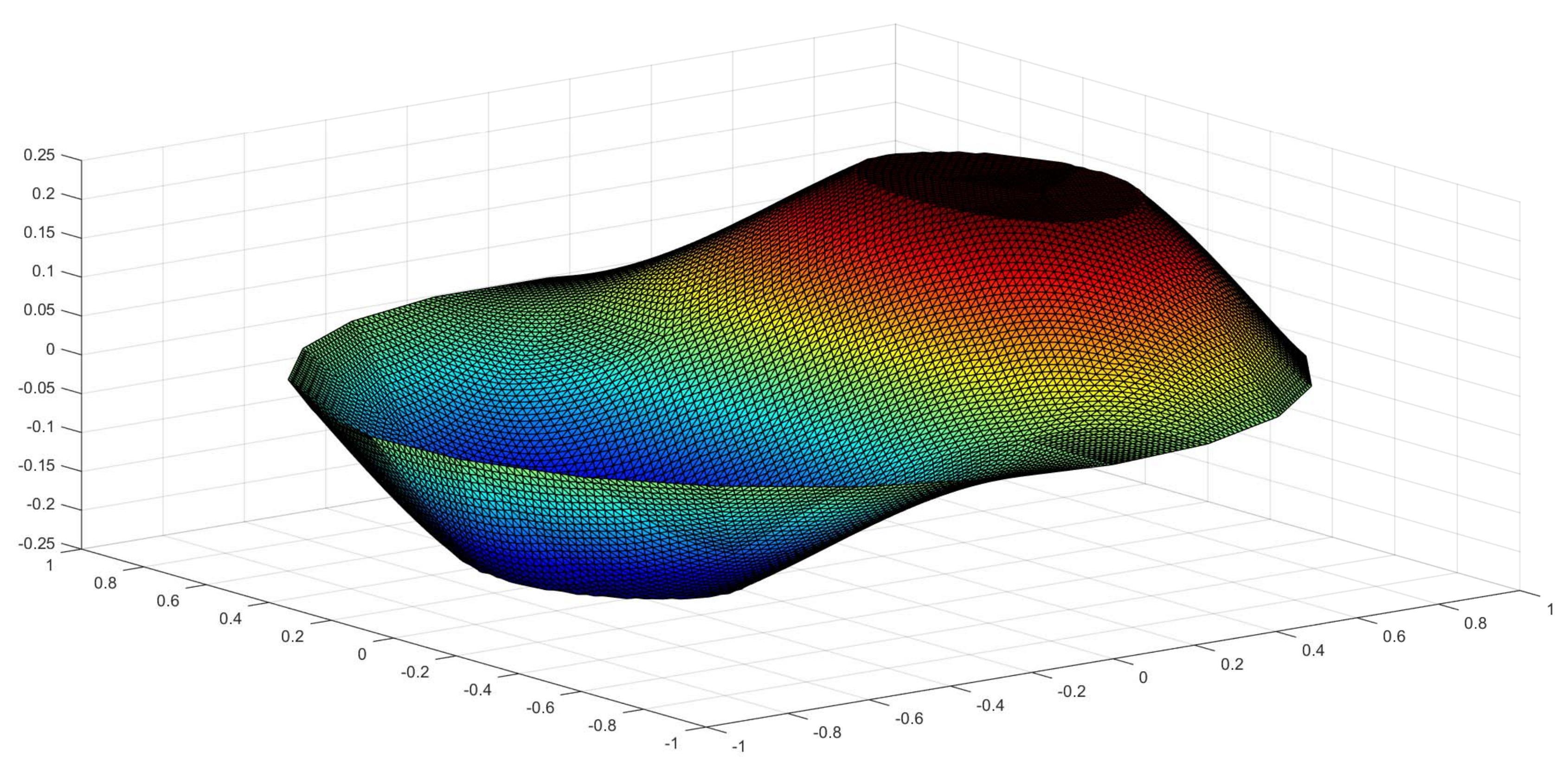}
\caption{optimal control $u_h$ on the grid with $h=2^{-6}$.}\label{fig:control on h=$2^{-6}$}
\end{figure}

\begin{table}[H]\footnotesize
\caption{Example \ref{example:1}: The convergence behavior of our ihADMM, classical ADMM and LADMM for (\ref{matrix vector form}). In the table, $\#$dofs stands for the number of degrees of freedom for the control variable on each grid level.}\label{tab:1}
\begin{center}
\begin{tabular}{@{\extracolsep{\fill}}|c|c|c|c|c|c|c|c|}
\hline
\hline
\multirow{2}{*}{$h$}&\multirow{2}{*}{$\#$dofs}&\multirow{2}{*}{$E_2$}&\multirow{2}{*}{EOC}& \multirow{2}{*}{Index} &\multirow{2}{*}{ihADMM} & \multirow{2}{*}{classical ADMM}& \multirow{2}{*}{LADMM} \\
&&&&&&&\\

\hline
                            &&&&\multirow{2}{*}{iter}            &\multirow{2}{*}{26} &\multirow{2}{*}{44} &\multirow{2}{*}{29}             \\
                            &&&&&&&\\
\multirow{2}{*}{$2^{-4}$}   &\multirow{2}{*}{148}&\multirow{2}{*}{4.09e-3}&\multirow{2}{*}{--}&\multirow{2}{*}{residual $\eta$}
                            &\multirow{2}{*}{9.13e-07}            &\multirow{2}{*}{9.91e-07}
                            &\multirow{2}{*}{9.39e-07}             \\
                            &&&&&&&\\
                            &&&&\multirow{2}{*}{CPU times/s}             &\multirow{2}{*}{0.23} &\multirow{2}{*}{0.66} &\multirow{2}{*}{0.28}  \\
                            &&&&&&&\\
\hline
                            &&&&\multirow{2}{*}{iter}            &\multirow{2}{*}{27} &\multirow{2}{*}{58} &\multirow{2}{*}{34}          \\
                            &&&&&&&\\
\multirow{2}{*}{$2^{-5}$}   &\multirow{2}{*}{635}&\multirow{2}{*}{1.46e-3}&\multirow{2}{*}{1.4858}
                            &\multirow{2}{*}{residual $\eta$}
                            &\multirow{2}{*}{8.21e-07}&\multirow{2}{*}{8.11e-07}
                            &\multirow{2}{*}{8.56e-07}            \\
                            &&&&&&&\\
                            &&&&\multirow{2}{*}{CPU time/s}          &\multirow{2}{*}{0.57} &\multirow{2}{*}{2.32} &\multirow{2}{*}{0.67}             \\
                            &&&&&&&\\
\hline
                            &&&&\multirow{2}{*}{iter}            &\multirow{2}{*}{26} &\multirow{2}{*}{76} &\multirow{2}{*}{32}             \\
                            &&&&&&&\\
\multirow{2}{*}{$2^{-6}$}   &\multirow{2}{*}{2629}&\multirow{2}{*}{4.82e-4}&\multirow{2}{*}{1.5990}&\multirow{2}{*}{residual $\eta$}
                            &\multirow{2}{*}{7.16e-07}&\multirow{2}{*}{8.10e-07}
                            &\multirow{2}{*}{8.43e-07}             \\
                            &&&&&&&\\
                            &&&&\multirow{2}{*}{CPU time/s}          &\multirow{2}{*}{1.97} &\multirow{2}{*}{9.12} &\multirow{2}{*}{2.79}            \\
                            &&&&&&&\\
\hline
                            &&&&\multirow{2}{*}{iter}            &\multirow{2}{*}{25} &\multirow{2}{*}{52} &\multirow{2}{*}{34}               \\
                            &&&&&&&\\
\multirow{2}{*}{$2^{-7}$}   &\multirow{2}{*}{10697}&\multirow{2}{*}{1.66e-4}&\multirow{2}{*}{1.5359}&\multirow{2}{*}{residual $\eta$}
                            &\multirow{2}{*}{ 5.98e-07}&\multirow{2}{*}{9.79e-07}
                            &\multirow{2}{*}{9.85e-07}           \\
                            &&&&&&&\\
                            &&&&\multirow{2}{*}{CPU time/s}          &\multirow{2}{*}{8.57} &\multirow{2}{*}{36.72} &\multirow{2}{*}{14.32}            \\
                            &&&&&&&\\
\hline
                            &&&&\multirow{2}{*}{iter}            &\multirow{2}{*}{26} &\multirow{2}{*}{130} &\multirow{2}{*}{34}              \\
                            &&&&&&&\\
\multirow{2}{*}{$2^{-8}$}   &\multirow{2}{*}{43153}&\multirow{2}{*}{7.07e-05}&\multirow{2}{*}{1.4638}
                            &\multirow{2}{*}{residual $\eta$}
                            &\multirow{2}{*}{5.97e-07}&\multirow{2}{*}{3.51e-07}
                            &\multirow{2}{*}{6.96e-07}            \\
                            &&&&&&&\\
                            &&&&\multirow{2}{*}{CPU time/s}          &\multirow{2}{*}{55.92} &\multirow{2}{*}{1303.63} &\multirow{2}{*}{82.88}            \\
                            &&&&&&&\\
\hline
                            &&&&\multirow{2}{*}{iter}            &\multirow{2}{*}{26} &\multirow{2}{*}{417} &\multirow{2}{*}{35}              \\
                            &&&&&&&\\
\multirow{2}{*}{$2^{-9}$}   &\multirow{2}{*}{173345}&\multirow{2}{*}{2.41e-05}&\multirow{2}{*}{1.4810}
                            &\multirow{2}{*}{residual $\eta$}
                            &\multirow{2}{*}{8.57e-07}&\multirow{2}{*}{9.66e-07}
                            &\multirow{2}{*}{9.48e-07}             \\
                            &&&&&&&\\
                            &&&&\multirow{2}{*}{CPU time/s}          &\multirow{2}{*}{588.76} &\multirow{2}{*}{68550.14} &\multirow{2}{*}{1080.24}           \\
                            &&&&&&&\\
\hline
&&&&\multirow{2}{*}{iter}            &\multirow{2}{*}{26} &\multirow{2}{*}{\color{red}500} &\multirow{2}{*}{35} \\
                            &&&&&&&\\
\multirow{2}{*}{$2^{-10}$}   &\multirow{2}{*}{694849}&\multirow{2}{*}{--}&\multirow{2}{*}{--}
                            &\multirow{2}{*}{residual $\eta$}
                            &\multirow{2}{*}{7.49e-07}&\multirow{2}{*}{\color{red}2.20e-05}
                            &\multirow{2}{*}{9.71e-07}             \\
                            &&&&&&&\\
                            &&&&\multirow{2}{*}{CPU time/s}          &\multirow{2}{*}{10335.43} &\multirow{2}{*}{\color{red}469845.38} &\multirow{2}{*}{15290.49} \\
                            &&&&&&&\\
\hline
\end{tabular}
\end{center}
\end{table}

\begin{table}[H]\footnotesize
\caption{Example \ref{example:1}: The convergence behavior of our two-phase strategy, PDAS with line search.}\label{tab:2}
\begin{center}
\begin{tabular}{@{\extracolsep{\fill}}|c|c|c|cc|c|c|c|}
\hline
\hline
\multirow{2}{*}{$h$}&\multirow{2}{*}{$\#$dofs}& \multirow{2}{*}{Index of performance} &\multicolumn{2}{c|}{Two-Phase strategy}& \multirow{2}{*}{PDAS with line search} \\
\cline{4-5}
& & &ihADMM\ $+$\ PDAS&& \\
\hline
                            &&\multirow{2}{*}{iter}            &\multirow{2}{*}{8\quad $+$\quad 11} &&\multirow{2}{*}{35} \\
                            &&&&&\\
\multirow{2}{*}{$2^{-4}$}   &\multirow{2}{*}{148}&\multirow{2}{*}{residual $\eta$}
                            &\multirow{2}{*}{8.84e-4\quad $/$\quad8.15e-12}           & &\multirow{2}{*}{8.17e-12}
            \\
                            &&&&&\\
                            &&\multirow{2}{*}{CPU times/s}             &\multirow{2}{*}{0.07\quad $+$\quad0.25} &&\multirow{2}{*}{0.79}  \\
                            &&&&&\\
\hline
                            &&\multirow{2}{*}{iter}            &\multirow{2}{*}{9\quad $+$\quad 12} &&\multirow{2}{*}{34} \\
                            &&&&&\\
\multirow{2}{*}{$2^{-5}$}   &\multirow{2}{*}{635}
                            &\multirow{2}{*}{residual $\eta$}
                            &\multirow{2}{*}{6.45e-04\quad $/$\quad7.49e-12}          & &\multirow{2}{*}{7.47e-12}            \\
                            &&&&&\\
                            &&\multirow{2}{*}{CPU time/s}          &\multirow{2}{*}{0.13\quad $+$\quad0.70} &&\multirow{2}{*}{1.98} \\
                            &&&&&\\
\hline
                            &&\multirow{2}{*}{iter}            &\multirow{2}{*}{8\quad $+$\quad 11} &&\multirow{2}{*}{35} \\
                            &&&&&\\
\multirow{2}{*}{$2^{-6}$}   &\multirow{2}{*}{2629}&\multirow{2}{*}{residual $\eta$}
                            &\multirow{2}{*}{7.99e-04\quad $/$\quad1.51e-12}          &  &\multirow{2}{*}{1.48e-12}             \\
                            &&&&&\\
                            &&\multirow{2}{*}{CPU time/s}          &\multirow{2}{*}{0.61\quad $+$\quad4.18} &&\multirow{2}{*}{13.31}\\
                            &&&&&\\
\hline
                            &&\multirow{2}{*}{iter}            &\multirow{2}{*}{8\quad $+$\quad 12} &&\multirow{2}{*}{34} \\
                            &&&&&\\
\multirow{2}{*}{$2^{-7}$}   &\multirow{2}{*}{10697}&\multirow{2}{*}{residual $\eta$}
                            &\multirow{2}{*}{6.61e-04\quad $/$\quad4.95e-12}        &   &\multirow{2}{*}{1.52e-12}          \\
                            &&&&&\\
                            &&\multirow{2}{*}{CPU time/s}          &\multirow{2}{*}{3.14\quad $+$\quad16.52} &&\multirow{2}{*}{52.50} \\
                            &&&&&\\
\hline
                            &&\multirow{2}{*}{iter}            &\multirow{2}{*}{8\quad $+$\quad 12} &&\multirow{2}{*}{36} \\
                            &&&&&\\
\multirow{2}{*}{$2^{-8}$}   &\multirow{2}{*}{43153}
                            &\multirow{2}{*}{residual $\eta$}
                            &\multirow{2}{*}{5.72e-04\quad $/$\quad1.51e-12}        &     &\multirow{2}{*}{1.50e-12}             \\
                            &&&&&\\
                            &&\multirow{2}{*}{CPU time/s}          &\multirow{2}{*}{23.22\quad $+$\quad77.34} &&\multirow{2}{*}{253.18}\\
                            &&&&&\\
\hline
                            &&\multirow{2}{*}{iter}            &\multirow{2}{*}{8\quad $+$\quad 11} &&\multirow{2}{*}{34} \\
                            &&&&&\\
\multirow{2}{*}{$2^{-9}$}   &\multirow{2}{*}{173345}
                            &\multirow{2}{*}{residual $\eta$}
                            &\multirow{2}{*}{5.62e-04\quad $+$\quad1.29e-12}       &     &\multirow{2}{*}{1.29e-12}            \\
                            &&&&&\\
                            &&\multirow{2}{*}{CPU time/s}&\multirow{2}{*}{181.57\quad $+$\quad473.53} &&\multirow{2}{*}{1463.63}\\
                            &&&&&\\
\hline
                            &&\multirow{2}{*}{iter}            &\multirow{2}{*}{8\quad $+$\quad 11} &&\multirow{2}{*}{34} \\
                            &&&&&\\
\multirow{2}{*}{$2^{-10}$}   &\multirow{2}{*}{694849}
                            &\multirow{2}{*}{residual $\eta$}
                            &\multirow{2}{*}{6.00e-04\quad $/$\quad1.59e-12}       &     &\multirow{2}{*}{1.60e-12}            \\
                            &&&&&\\
                            &&\multirow{2}{*}{CPU time/s}&\multirow{2}{*}{3180.13\quad $+$\quad7983.61} &&\multirow{2}{*}{22855.63}\\
                            &&&&&\\
\hline
\end{tabular}
\end{center}
\end{table}
\end{example}

\begin{example}\label{example:2}\cite [Example 4.1]{VariationPDAS}
  \begin{equation*}
     \left\{ \begin{aligned}
        &\min \limits_{(y,u)\in H^1_0(\Omega)\times L^2(\Omega)}^{}\ \ J(y,u)=\frac{1}{2}\|y-y_d\|_{L^2(\Omega)}^{2}+\frac{\alpha}{2}\|u\|_{L^2(\Omega)}^{2} \\
        &\qquad\quad{\rm s.t.}\qquad \quad-\Delta y=u,\quad \mathrm{in}\  \Omega\\
         &\qquad \qquad \quad\qquad \qquad ~y=0,\quad  \mathrm{on}\ \partial\Omega\\
         &\qquad \qquad \qquad\qquad  u\in U_{ad}=\{v(x)|a\leq v(x)\leq b, {\rm a.e }\  \mathrm{on}\ \Omega \},
                          \end{aligned} \right.
 \end{equation*}
Here, we consider the problem with control $u\in L^2(\Omega)$ on the unit square $\Omega= (0, 1)^2$ with $a=0.3$ and $b=1$. Furthermore, we set the parameters $\alpha=0.001$ and the desired state $y_d=-4\pi^2\alpha\sin{(\pi x)}\sin{(\pi y)}+\mathcal{S}r$, and $r=\min{(1,\max{(0.3,2\sin{(\pi x)}\sin{(\pi y)})})}$, where $\mathcal{S}$ denotes the solution operator associated with $-\Delta $. In addition, from the choice of parameters, it implies that $u\equiv r$ is the unique control solution to the continuous problem.

The exact control and the discretized optimal control on the grid with $h=2^{-7}$ are presented in Figure \ref{fig:subfig}. The error of the control $u$ w.r.t the $L^2$-norm and the EOC for control are presented in Table \ref{tab:3}. They also confirm that indeed the convergence rate is of order $o(h)$.

Numerical results for the accuracy of solution, number of iterations and cpu time obtained by our ihADMM, classical ADMM and LADMM methods are also shown in Table \ref{tab:3}. Experiment results show that the ADMM has evident advantage over the classical ADMM and the APG method in computing time. Furthermore, the numerical results in terms of iteration numbers also illustrate the mesh-independent performance of our ihADMM. In addition, in Table \ref{tab:4}, we give the numerical results obtained by our two-phase strategy and the PDAS method with line search. As a result from Table \ref{tab:4}, it can be observed that our two-phase strategy outperform the PDAS with line search in terms of the CPU time. These results demonstrate that our ihADMM is highly efficient in obtaining an approximate solution with moderate accuracy. Clearly, numerical results in terms of the accuracy of solution and CPU time demonstrate the efficiency and robustness of our proposed two-phase frame in obtaining accurate solutions. 

\begin{table}[ht]\footnotesize
\caption{Example \ref{example:2}: The convergence behavior of ihADMM, classical ADMM and LADMM for (\ref{matrix vector form}). In the table, $\#$dofs stands for the number of degrees of freedom for the control variable on each grid level.
}\label{tab:3}
\begin{center}
\begin{tabular}{@{\extracolsep{\fill}}|c|c|c|c|c|c|c|c|}
\hline
\hline
\multirow{2}{*}{$h$}&\multirow{2}{*}{$\#$dofs}&\multirow{2}{*}{$E_2$}&\multirow{2}{*}{EOC}& \multirow{2}{*}{Index} &\multirow{2}{*}{ihADMM} & \multirow{2}{*}{classical ADMM}& \multirow{2}{*}{LADMM} \\
&&&&&&&\\
\hline

                            &&&&\multirow{2}{*}{iter}            &\multirow{2}{*}{24} &\multirow{2}{*}{120} &\multirow{2}{*}{29}             \\
                            &&&&&&&\\
\multirow{2}{*}{$\sqrt{2}/2^{4}$}   &\multirow{2}{*}{225}&\multirow{2}{*}{0.0157}&\multirow{2}{*}{--}&\multirow{2}{*}{residual $\eta$}
                            &\multirow{2}{*}{ 8.46e-07}            &\multirow{2}{*}{9.98e-07}
                            &\multirow{2}{*}{9.81e-07}             \\
                            &&&&&&&\\
                            &&&&\multirow{2}{*}{CPU times/s}             &\multirow{2}{*}{0.28} &\multirow{2}{*}{4.72} &\multirow{2}{*}{0.42}  \\
                            &&&&&&&\\
\hline
                            &&&&\multirow{2}{*}{iter}            &\multirow{2}{*}{23} &\multirow{2}{*}{32} &\multirow{2}{*}{29}            \\
                            &&&&&&&\\
\multirow{2}{*}{$\sqrt{2}/2^{5}$}   &\multirow{2}{*}{961}&\multirow{2}{*}{5.95e-3}&\multirow{2}{*}{1.3992}
                            &\multirow{2}{*}{residual $\eta$}
                            &\multirow{2}{*}{4.88e-07}&\multirow{2}{*}{7.25e-07}
                            &\multirow{2}{*}{8.37e-07}             \\
                            &&&&&&&\\
                            &&&&\multirow{2}{*}{CPU time/s}          &\multirow{2}{*}{0.85} &\multirow{2}{*}{4.00} &\multirow{2}{*}{1.06}            \\
                            &&&&&&&\\
\hline
                            &&&&\multirow{2}{*}{iter}            &\multirow{2}{*}{23} &\multirow{2}{*}{45} &\multirow{2}{*}{31}              \\
                            &&&&&&&\\
\multirow{2}{*}{$\sqrt{2}/2^{6}$}   &\multirow{2}{*}{3969}&\multirow{2}{*}{1.89e-3}&\multirow{2}{*}{1.6558}&\multirow{2}{*}{residual $\eta$}
                            &\multirow{2}{*}{8.70e-07}&\multirow{2}{*}{3.34e-07}
                            &\multirow{2}{*}{4.53e-07}             \\
                            &&&&&&&\\
                            &&&&\multirow{2}{*}{CPU time/s}          &\multirow{2}{*}{4.81} &\multirow{2}{*}{22.35} &\multirow{2}{*}{6.94}            \\
                            &&&&&&&\\
\hline
                            &&&&\multirow{2}{*}{iter}            &\multirow{2}{*}{25} &\multirow{2}{*}{78} &\multirow{2}{*}{32}              \\
                            &&&&&&&\\
\multirow{2}{*}{$\sqrt{2}/2^{7}$}   &\multirow{2}{*}{16129}&\multirow{2}{*}{7.21e-4}&\multirow{2}{*}{1.3898}&\multirow{2}{*}{residual $\eta$}
                            &\multirow{2}{*}{4.85e-08}&\multirow{2}{*}{8.86e-08}
                            &\multirow{2}{*}{1.87e-08}             \\
                            &&&&&&&\\
                            &&&&\multirow{2}{*}{CPU time/s}          &\multirow{2}{*}{21.07} &\multirow{2}{*}{373.25} &\multirow{2}{*}{29.61}            \\
                            &&&&&&&\\
\hline
                            &&&&\multirow{2}{*}{iter}            &\multirow{2}{*}{24} &\multirow{2}{*}{183} &\multirow{2}{*}{30}              \\
                            &&&&&&&\\
\multirow{2}{*}{$\sqrt{2}/2^{8}$}   &\multirow{2}{*}{65025}&\multirow{2}{*}{2.48e-4}&\multirow{2}{*}{1.5383}
                            &\multirow{2}{*}{residual $\eta$}
                            &\multirow{2}{*}{3.66e-07}&\multirow{2}{*}{7.05e-07}
                            &\multirow{2}{*}{3.15e-07}             \\
                            &&&&&&&\\
                            &&&&\multirow{2}{*}{CPU time/s}          &\multirow{2}{*}{142.54} &\multirow{2}{*}{2669.46} &\multirow{2}{*}{191.29}            \\
                            &&&&&&&\\
\hline
                            &&&&\multirow{2}{*}{iter}            &\multirow{2}{*}{22} &\multirow{2}{*}{283} &\multirow{2}{*}{31}              \\
                            &&&&&&&\\
\multirow{2}{*}{$\sqrt{2}/2^{9}$}   &\multirow{2}{*}{261121}&\multirow{2}{*}{8.87e-05}&\multirow{2}{*}{1.4841}
                            &\multirow{2}{*}{residual $\eta$}
                            &\multirow{2}{*}{7.57e-07}&\multirow{2}{*}{5.56e-07}
                            &\multirow{2}{*}{4.81e-07}             \\
                            &&&&&&&\\
                            &&&&\multirow{2}{*}{CPU time/s}          &\multirow{2}{*}{1514.26} &\multirow{2}{*}{42758.33} &\multirow{2}{*}{2063.58}            \\
                            &&&&&&&\\
\hline
                            &&&&\multirow{2}{*}{iter}            &\multirow{2}{*}{24} &\multirow{2}{*}{\color{red}500} &\multirow{2}{*}{29}              \\
                            &&&&&&&\\
\multirow{2}{*}{$\sqrt{2}/2^{10}$}   &\multirow{2}{*}{1046529}&\multirow{2}{*}{3.15e-05}&\multirow{2}{*}{1.4936}
                            &\multirow{2}{*}{residual $\eta$}
                            &\multirow{2}{*}{5.12e-07}&\multirow{2}{*}{\color{red}4.58e-06}
                            &\multirow{2}{*}{2.87e-07}             \\
                            &&&&&&&\\
                            &&&&\multirow{2}{*}{CPU time/s}          &\multirow{2}{*}{22267.64} &\multirow{2}{*}{\color{red}545843.68} &\multirow{2}{*}{28304.33}            \\
                            &&&&&&&\\
\hline
\end{tabular}
\end{center}
\end{table}

\begin{figure}[ht]
\centering
\subfigure[exact control $u$]{
\label{fig:subfig:a}
\includegraphics[width=0.46\textwidth]{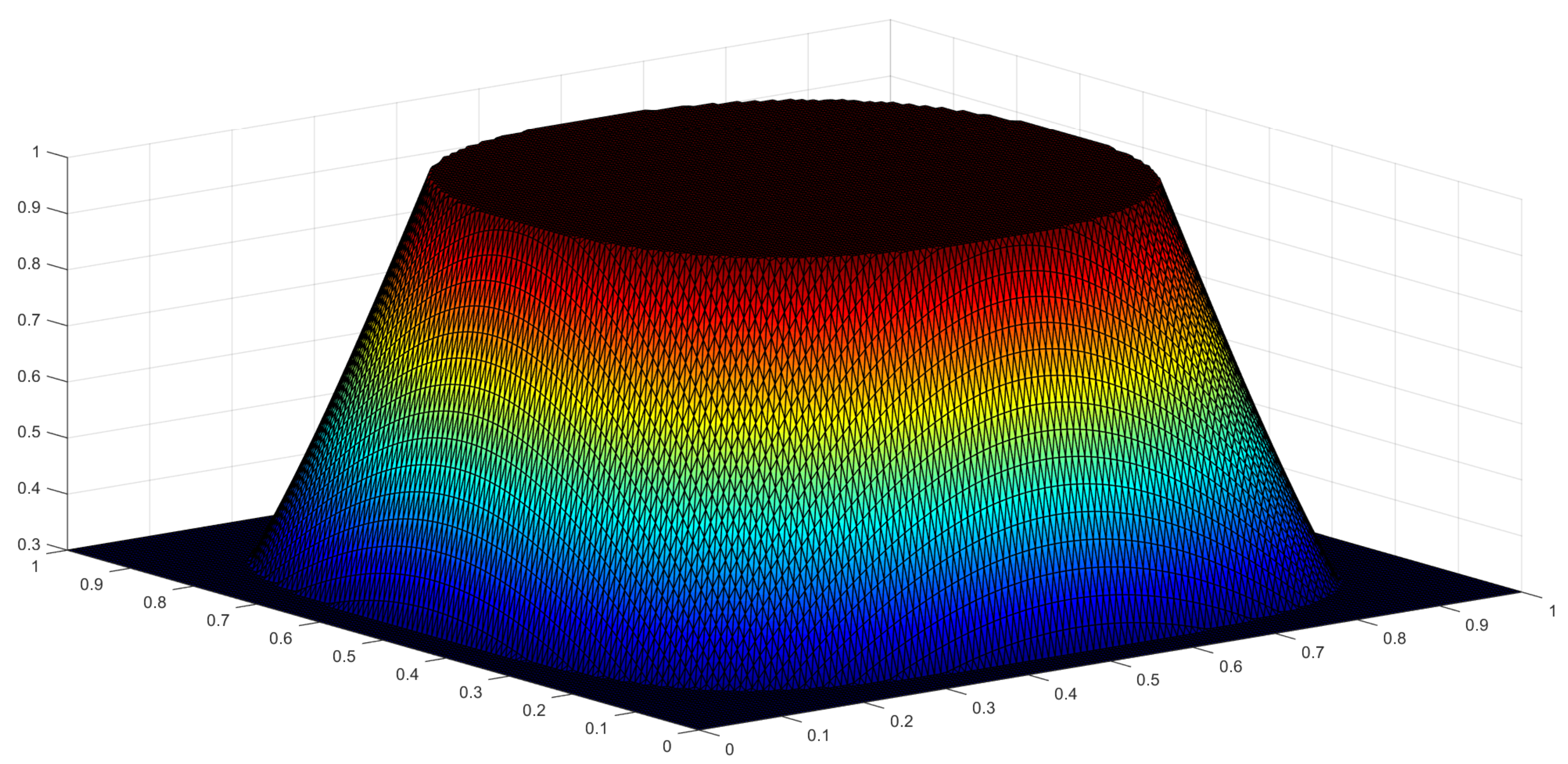}}
\subfigure[optimal control $u_h$]{
\label{fig:subfig:b}
\includegraphics[width=0.46\textwidth]{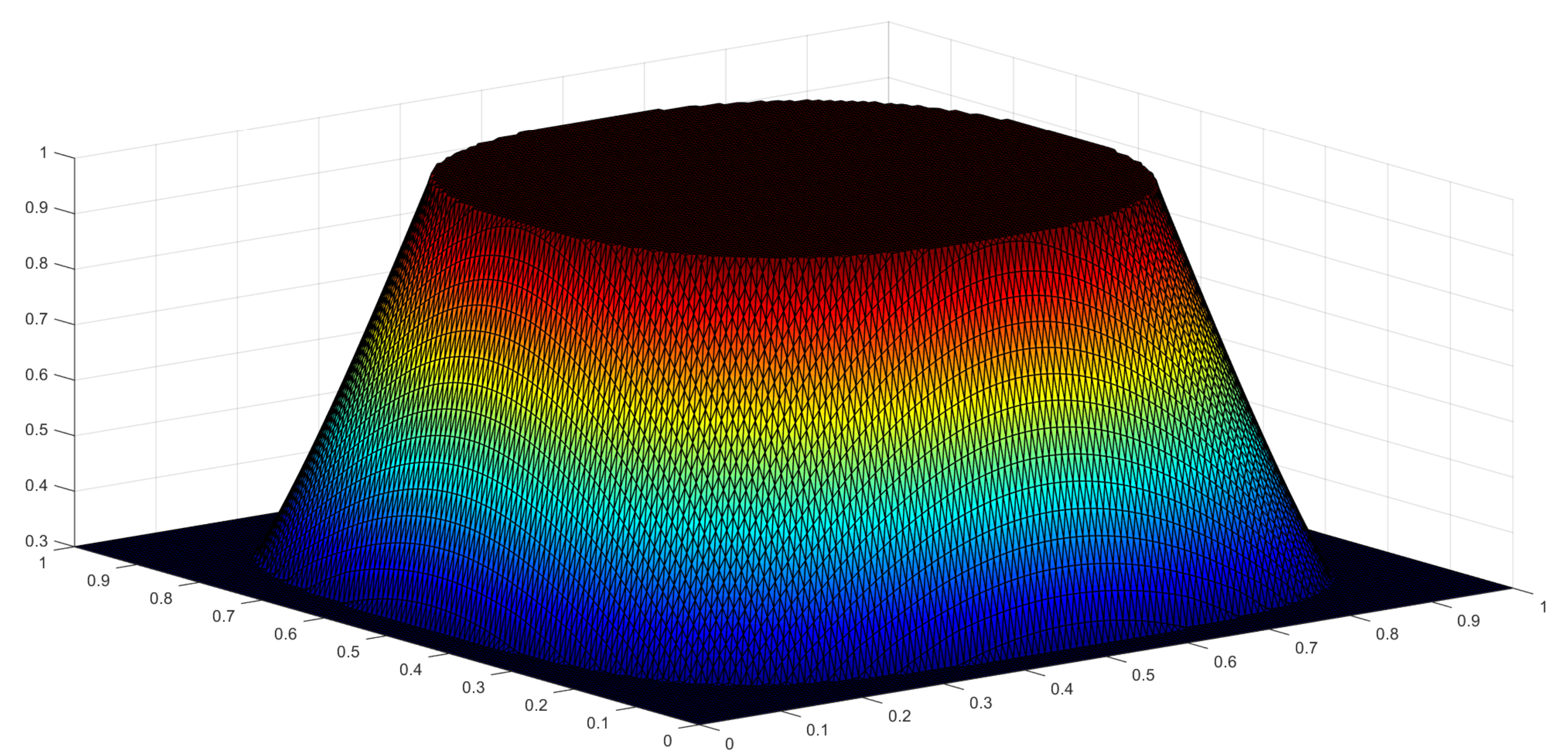}}
\caption{control solution on the grid of size $h=2^{-7}$}
\label{fig:subfig} 
\end{figure}

\begin{table}[H]\footnotesize
\caption{Example \ref{example:2}: The behavior of two-phase strategy and the PDAS method.}\label{tab:4}
\begin{center}
\begin{tabular}{@{\extracolsep{\fill}}|c|c|c|cc|c|c|c|}
\hline
\hline
\multirow{2}{*}{$h$}&\multirow{2}{*}{$\#$dofs}& \multirow{2}{*}{Index of performance} &\multicolumn{2}{c|}{Two-Phase strategy}& \multirow{2}{*}{PDAS with line search} \\
\cline{4-5}
& & &ihADMM\ $+$\ PDAS&& \\
\hline
                            &&\multirow{2}{*}{iter}            &\multirow{2}{*}{10\quad $+$\quad 10} &&\multirow{2}{*}{29} \\
                            &&&&&\\
\multirow{2}{*}{$\sqrt{2}/2^{4}$}   &\multirow{2}{*}{225}&\multirow{2}{*}{residual $\eta$}
                            &\multirow{2}{*}{8.68e-04\quad $/$\quad1.67e-12}           & &\multirow{2}{*}{1.67e-12}
            \\
                            &&&&&\\
                            &&\multirow{2}{*}{CPU times/s}             &\multirow{2}{*}{0.15\quad $+$\quad0.80} &&\multirow{2}{*}{2.44}  \\
                            &&&&&\\
\hline
                            &&\multirow{2}{*}{iter}            &\multirow{2}{*}{12\quad $+$\quad 9} &&\multirow{2}{*}{27} \\
                            &&&&&\\
\multirow{2}{*}{$\sqrt{2}/2^{5}$}   &\multirow{2}{*}{961}
                            &\multirow{2}{*}{residual $\eta$}
                            &\multirow{2}{*}{6.58e-04\quad $/$\quad3.95e-12}          & &\multirow{2}{*}{3.95e-12}            \\
                            &&&&&\\
                            &&\multirow{2}{*}{CPU time/s}          &\multirow{2}{*}{0.56\quad $+$\quad2.36} &&\multirow{2}{*}{6.52} \\
                            &&&&&\\
\hline
                            &&\multirow{2}{*}{iter}            &\multirow{2}{*}{11\quad $+$\quad 8} &&\multirow{2}{*}{30} \\
                            &&&&&\\
\multirow{2}{*}{$\sqrt{2}/2^{6}$}   &\multirow{2}{*}{3969}&\multirow{2}{*}{residual $\eta$}
                            &\multirow{2}{*}{8.24e-04\quad $/$\quad5.73e-12}          &  &\multirow{2}{*}{ 5.73e-12}             \\
                            &&&&&\\
                            &&\multirow{2}{*}{CPU time/s}          &\multirow{2}{*}{2.84\quad $+$\quad8.14} &&\multirow{2}{*}{26.99}\\
                            &&&&&\\
\hline
                            &&\multirow{2}{*}{iter}            &\multirow{2}{*}{11\quad $+$\quad 10} &&\multirow{2}{*}{29} \\
                            &&&&&\\
\multirow{2}{*}{$\sqrt{2}/2^{7}$}   &\multirow{2}{*}{16129}&\multirow{2}{*}{residual $\eta$}
                            &\multirow{2}{*}{8.91e-04\quad $/$\quad1.42e-12}        &   &\multirow{2}{*}{1.41e-12}          \\
                            &&&&&\\
                            &&\multirow{2}{*}{CPU time/s}          &\multirow{2}{*}{12.70\quad $+$\quad48.06} &&\multirow{2}{*}{143.79} \\
                            &&&&&\\
\hline
                            &&\multirow{2}{*}{iter}            &\multirow{2}{*}{10\quad $+$\quad 12} &&\multirow{2}{*}{31} \\
                            &&&&&\\
\multirow{2}{*}{$\sqrt{2}/2^{8}$}   &\multirow{2}{*}{65025}
                            &\multirow{2}{*}{residual $\eta$}
                            &\multirow{2}{*}{9.90e-04\quad $/$\quad8.26e-13}        &     &\multirow{2}{*}{8.25e-13}             \\
                            &&&&&\\
                            &&\multirow{2}{*}{CPU time/s}          &\multirow{2}{*}{83.14\quad $+$\quad225.36} &&\multirow{2}{*}{757.71}\\
                            &&&&&\\
\hline
                            &&\multirow{2}{*}{iter}            &\multirow{2}{*}{11\quad $+$\quad 10} &&\multirow{2}{*}{30} \\
                            &&&&&\\
\multirow{2}{*}{$\sqrt{2}/2^{9}$}   &\multirow{2}{*}{261121}
                            &\multirow{2}{*}{residual $\eta$}
                            &\multirow{2}{*}{6.57e-04\quad $/$\quad2.55e-12}       &     &\multirow{2}{*}{2.55e-12}            \\
                            &&&&&\\
                            &&\multirow{2}{*}{CPU time/s}&\multirow{2}{*}{887.38\quad $+$\quad1886.33} &&\multirow{2}{*}{4629.40}\\
                            &&&&&\\
\hline
                            &&\multirow{2}{*}{iter}            &\multirow{2}{*}{11\quad $+$\quad 10} &&\multirow{2}{*}{31} \\
                            &&&&&\\
\multirow{2}{*}{$\sqrt{2}/2^{10}$}   &\multirow{2}{*}{1046529}
                            &\multirow{2}{*}{residual $\eta$}
                            &\multirow{2}{*}{9.25e-04\quad $/$\quad1.04e-13}       &     &\multirow{2}{*}{1.04e-13}            \\
                            &&&&&\\
                            &&\multirow{2}{*}{CPU time/s}&\multirow{2}{*}{10206.22\quad $+$\quad12927.28} &&\multirow{2}{*}{38760.16}\\
                            &&&&&\\
\hline
\end{tabular}
\end{center}
\end{table}

\end{example}

\section{Concluding Remarks}\label{sec:6}
In this paper, we have designed a two-phase method for solving the optimal control problems with box control constraints. By taking advantage of inherent structures of the problem, in Phase-I, we proposed an inexact heterogeneous ADMM (ihADMM) to solve discretized problems. Furthermore, theoretical results on the global convergence as well as the iteration complexity results $o(1/k)$ in non-ergodic sense for ihADMM were established. Moreover, an implementable inexactness criteria was used which allow the accuracy of the generated ihADMM to be easily implementable. Moreover, in order to obtain more accurate solution, in Phase-II taking the advantage of the local superlinear convergence of the primal dual active set method, the PDAS method is used as a postprocessor of the ihADMM. Numerical results demonstrated the efficiency of our ihADMM and the two-phase strategy.
\appendix
\section{Proof of Proposition \ref{descent proposition} and Proposition \ref{descent proposition2}}
\label{sec:proof}
Before giving two qusi-descent properties: e.g., the proof of Proposition \ref{descent proposition} and Proposition \ref{descent proposition2}, we first introduce the following two basic identities:
\begin{eqnarray}
 \label{identity1} &&\langle x, y\rangle_Q =\frac{1}{2}(\|x\|^2_Q+\|y\|^2_Q-\|x-y\|^2_Q)=\frac{1}{2}(\|x+y\|^2_Q-\|x\|^2_Q-\|y\|^2_Q) \\
  \label{identity2}&&\langle x-x', y-y'\rangle_Q  = \frac{1}{2}(\|x+y\|^2_Q+\|x'+y'\|^2_Q-\|x+y'\|^2_Q-\|x'+y\|^2_Q)
\end{eqnarray}
which hold for any $x, y, x', y'$ in the same Euclidean space and a self-adjoint positive semidefinite linear operator $Q$. The two identities would be frequently used in proof of Proposition \ref{descent proposition} and Proposition \ref{descent proposition2}.

\begin{proof}
Based on the optimality condition (\ref{equ: inexact variational inequalities1}) and (\ref{equ: inexact variational inequalities2}) for $(u^{k+1}, z^{k+1})$ and the optimality condition (\ref{equ: exact variational inequalities1}) and (\ref{equ: exact variational inequalities2}) for $(u^*,z^*)$, let $u_1=u^{k+1}, u_2=u^*, z_1=z^{k+1}$ and $z_2=z^*$ in (\ref{subdifferential strongly monotone}) and (\ref{subdifferential monotone}), respectively, we are able to derive that
\begin{eqnarray}\label{variational inequlaities}
\label{variational inequlaities1}&&\langle\delta^k-(M_h\lambda^k+\sigma M_h(u^{k+1}-z^k))+M_h\lambda^*, u^{k+1}-u^*\rangle\geq\|u^{k+1}-u^*\|^2_{\Sigma_f}, \\
\label{variational inequlaities2}&&\langle M_h\lambda^k+\sigma W_h(u^{k+1}-z^k))-M_h\lambda^*, z^{k+1}-z^*\rangle\geq\frac{\sigma}{2}\|z^{k+1}-z^*\|^2_{W_h}.
\end{eqnarray}
Adding (\ref{variational inequlaities1}) and (\ref{variational inequlaities2}), we get
\begin{equation}\label{inequlaities1}
  \begin{aligned}
&\langle\delta^k,u^{k+1}-u^*\rangle-\langle\tilde\lambda^{k+1}-\lambda^*,M_hr^{k+1}\rangle-\sigma\langle M_h(z^{k+1}-z^k),u^{k+1}-u^*\rangle\\
&+\langle r^{k+1}, (W_h-M_h)(z^{k+1}-z^*)\rangle\geq\|u^{k+1}-u^*\|^2_{\Sigma_f}+\frac{\sigma}{2}\|z^{k+1}-z^*\|^2_{W_h},
  \end{aligned}
\end{equation}
where we have used the fact that $\lambda^k+\sigma(u^{k+1}-z^k)=\tilde\lambda^{k+1}+\sigma(z^{k+1}-z^k)$ and $u^*=z^*$. Next, we rewrite the last three terms on the left-hand side of (\ref{inequlaities1}). First, by (\ref{identity1}), we have that
\begin{equation}\label{descent estimates1}
  \begin{aligned}
  \langle\lambda^*-\tilde\lambda^{k+1},M_hr^{k+1}\rangle&=\langle\lambda^*-\lambda^k-\sigma r^{k+1}, M_hr^{k+1}\rangle \\
  &=\frac{1}{\tau\sigma}\langle\lambda^*-\lambda^k, M_h(\lambda^{k+1}-\lambda^k)\rangle -\sigma\|r^{k+1}\|^2_{M_h} \\
   &=\frac{1}{2\tau\sigma}(\|\lambda^k-\lambda^*\|^2_{M_h}-\|\lambda^{k+1}-\lambda^*\|^2_{M_h})+\frac{(\tau-2)\sigma}{2}\|r^{k+1}\|^2_{M_h}.
  \end{aligned}
\end{equation}
Second, by employing (\ref{identity2}) and $u^*=z^*$, we have
\begin{equation}\label{descent estimates2}
  \begin{aligned}
   \sigma\langle M_h(z^{k+1}-z^k),u^*-u^{k+1}\rangle =&\frac{\sigma}{2}\|z^k-z^*\|^2_{M_h}+\frac{\sigma}{2}\|r^{k+1}\|^2_{M_h}\\
   &-\frac{\sigma}{2}\|z^{k+1}-z^*\|^2_{M_h}-\frac{\sigma}{2}\|u^{k+1}-z^{k}\|^2_{M_h}.
  \end{aligned}
\end{equation}
Third, by Proposition \ref{eqn:martix properties}, we know $W_h-M_h$ is a symmetric positive definite matrix. Then using (\ref{identity1}) and $u^*=z^*$ , we get
\begin{equation}\label{descent estimates3}
  \begin{aligned}
&\langle r^{k+1}, (W_h-M_h)(z^{k+1}-z^*)\rangle\\
=&\langle u^{k+1}-z^{k+1}, (W_h-M_h)(z^{k+1}-z^*)\rangle\\
=&\frac{\sigma}{2}\|u^{k+1}-u^*\|^2_{W_h-M_h}-\frac{\sigma}{2}\|z^{k+1}-z^*\|^2_{W_h-M_h}-\frac{\sigma}{2}\|r^{k+1}\|^2_{W_h-M_h}.
  \end{aligned}
\end{equation}
Then, substituting (\ref{descent estimates1}), (\ref{descent estimates2}) and (\ref{descent estimates3}) into (\ref{inequlaities1}), we can get the required inequality (\ref{inequlaities property1}). This completes the proof of Proposition \ref{descent proposition}. For the proof of Proposition \ref{descent proposition2}, by substituting $\bar u^{k+1}$ and $\bar z^{k+1}$ for $u^{k+1}$ and $z^{k+1}$ in the proof of Proposition \ref{descent proposition}, we can get the assertion of this proposition.
\end{proof}

\section*{Acknowledgments}
The authors would like to thank Dr. Long Chen for the FEM package iFEM \cite{Chen} in Matlab and also would like to thank the colleagues for their valuable suggestions that led to
improvement in the paper.

\end{document}